\documentclass[12pt,a4paper]{amsart}
\usepackage{mathrsfs,color}
\usepackage{amsfonts}

\usepackage[active]{srcltx}
\usepackage{enumerate}
\usepackage[parfill]{parskip}

\usepackage{amsmath,amssymb,xspace,amsthm}
\usepackage[all,2cell]{xy}

\newtheorem{theorem}{Theorem}
\newtheorem{lemma}[theorem]{Lemma}
\newtheorem{remark}[theorem]{Remark}
\newtheorem{corollary}[theorem]{Corollary}

\newtheorem{example}[theorem]{Example}

\newtheorem{problem}[theorem]{Problem}

\newcommand{\tto}{\twoheadrightarrow}

\title[Graded simple Lie algebras and representations]{\bf Graded simple Lie algebras
and\\ graded simple representations}
\author{Volodymyr Mazorchuk and Kaiming Zhao}

\date{}

\begin{document}

\begin{abstract}
Let $Q$ be an abelian group  and $\Bbbk$ a field. We prove that 
any $Q$-graded simple Lie algebra $\mathfrak{g}$ over $\Bbbk$ 
is isomorphic to a loop algebra in case $\Bbbk$ has a primitive 
root of unity of order $|Q|$, if $Q$ is finite, or 
$\Bbbk$ is algebraically closed and $\dim \mathfrak{g}<|\Bbbk|$.

For  $Q$-graded simple modules over any $Q$-graded Lie algebra 
$\mathfrak{g}$, we propose a similar construction of all 
$Q$-graded simple modules over any $Q$-graded Lie algebra 
over $\Bbbk$   starting from nonextendable 
gradings of simple $\mathfrak{g}$-modules. We prove that any $Q$-graded simple 
module over  $\mathfrak{g}$ is isomorphic to a    loop module in case
$\Bbbk$ has a primitive root of unity of order $|Q|$, if $Q$ is 
finite,  or $\Bbbk$ is algebraically closed and 
$\dim \mathfrak{g}<|\Bbbk|$.
The isomorphism problem for simple graded modules constructed 
in this way remains open. For finite-dimensional $Q$-graded 
semisimple algebras we obtain a graded analogue of the Weyl Theorem.
\vspace{5mm}

\noindent {\bf Keywords:}  graded simple Lie algebra; character
group; finite abelian group; graded simple module;
graded Weyl's Theorem, graded Schur's Lemma

\vskip 3pt \noindent{\bf 2010 Mathematics Subject Classification:}
17B05; 17B10, 17B20; 17B65; 17B70.
\end{abstract}
\maketitle

%-------------- main part

\section{Introduction}\label{s0}

\subsection{General overview}\label{s0.1}

The present paper studies $Q$-graded simple Lie algebras 
for any abelian group $Q$ and $Q$-graded simple modules 
over $Q$-graded  Lie algebras over a field $\Bbbk$ under 
some mild restrictions.

Study of gradings on Lie algebras goes back at least as far as to the paper \cite{PZ}
which started a systematic approach to understanding of gradings by abelian
groups on simple finite dimensional Lie algebras over algebraically
closed fields of characteristic $0$. In the past two decades, there
was a significant interest to the study of  gradings on simple Lie algebras by
arbitrary groups, see the recent monograph \cite{EK2} and references
therein. In particular, there is an essentially complete classification of
fine gradings (up to equivalence) on all finite-dimensional simple Lie algebras over an
algebraically closed field of characteristic $0$, see \cite{EK2,E,Y}.
Some properties of simple $\mathbb{Z}_2$-graded Lie algebras were studied in \cite{Z}.
A classification of various classes of $\mathbb{Z}^2$-graded  Lie algebras with
at most one-dimensional homogeneous spaces was obtained in \cite{OZ1, OZ2, OZ3}.
A classification of simple Lie algebras with a $\mathbb{Z}^n$-grading such that
all homogeneous spaces are one-dimensional was obtained in \cite{KM}. For a given abelian group $Q$, a
classification of $Q$-gradings (up to isomorphism) on classical
simple Lie algebras over an algebraically closed field of characteristic different 
from $2$ was obtained in \cite{BK,E}, see also \cite{EK2}. In \cite{ABFP}, one finds
some characterizations of graded-central-simple algebras with split centroid, 
see Correspondence Theorem~7.1.1 in \cite{ABFP}. In general, it is difficult 
to find the centroid for a graded simple algebra. In the paper we will establish 
a correspondence theorem for graded-simple  Lie algebras without assuming they are 
central or with  split centroid.

When $Q$ is finite, Billig and Lau described quasi-finite $Q$-graded-simple 
modules over $Q$-graded associative (or Lie) algebras in \cite{BL}.
With respect to this, we will study $Q$-graded-simple modules over $Q$-graded 
Lie algebras without assuming qasifiniteness of modules or finiteness of $Q$.

\subsection{Notation and setup}\label{s0.2}

Throughout this paper, $\Bbbk$ denotes an arbitrary field with some restrictions in the context.
If not explicitly stated otherwise,
all vector spaces, algebras and tensor products are assumed to be over $\Bbbk$.
As usual, we denote by $\mathbb{Z}$, $\mathbb{N}$, $\mathbb{Z}_+$ and
$\mathbb{C}$ the sets of integers, positive integers, nonnegative integers
and complex numbers, respectively.

Let $Q$ be an additive abelian group. A {\em $Q$-graded Lie algebra
over $\Bbbk$} is a Lie algebra $\mathfrak{g}$ over $\Bbbk$ endowed
with a decomposition
\begin{displaymath}
 \mathfrak{g}=\bigoplus_{\alpha\in
Q}\mathfrak{g}_{\alpha}\,\, {\text{ such that }}\,\,
[\mathfrak{g}_{\alpha},\mathfrak{g}_{\beta}]\subset
\mathfrak{g}_{\alpha+\beta}\,\,  {\text{ for all }}\,\,
\alpha,\beta\in Q.
\end{displaymath}
Recall that a graded Lie algebra is called {\it graded simple} if
$\mathfrak{g}$ is not commutative and does not contain any
non-trivial graded ideal. Without loss of generality, for a  $Q$-graded simple Lie algebra
$\mathfrak{g}$ we will always assume that the elements $\alpha\in Q$ with
$\mathfrak{g}_\alpha\ne0$ generate $Q$. Gradings satisfying this condition
will be called {\em minimal}.

Let  $Q'$ be another  abelian group, $\mathfrak{g} $  a
$Q$-graded Lie algebra over a field $\Bbbk$  and $\mathfrak{g}'$ be
a $Q'$-graded Lie algebra over $\Bbbk$. We say that graded
Lie algebras $\mathfrak{g}$ and $ \mathfrak{g}'$ are {\it graded
isomorphic} if there is a group isomorphism $\tau:Q\to Q'$ and a Lie
algebra isomorphism $\sigma: \mathfrak{g}\to \mathfrak{g}'$ such
that $\sigma(\mathfrak{g}_\alpha)=\mathfrak{g}'_{\tau(\alpha)}$ for
all $\alpha\in Q.$

 A  {\em $Q$-graded module} $V$ over  a $Q$-graded
Lie algebra $\mathfrak{g}$ is a $\mathfrak{g}$-module endowed with a
decomposition
\begin{displaymath}
V=\bigoplus_{\alpha\in Q}V_{\alpha}\,\, {\text{ such that }}\,\,
\mathfrak{g}_{\alpha}\cdot V_{\beta}\subset V_{\alpha+\beta}\,\,
{\text{ for all }}\,\,\alpha,\beta\in Q.
\end{displaymath}
A graded module $V$ is called {\it graded simple} if $\mathfrak{g}V\ne0$
and $V$ does not contain any non-trivial graded submodule.

Let   $\mathfrak{g} $ be  a $Q$-graded Lie algebra over
$\Bbbk$  and
\begin{displaymath}
W'=\bigoplus_{\alpha\in Q}W_{\alpha},\,\,\,\,\,\, W=\bigoplus_{\alpha\in
Q}W'_{\alpha}
\end{displaymath}
be two $Q$-graded $\mathfrak{g}$-module. A $\mathfrak{g}$-module isomorphism
$\sigma: W\to W'$ is called a {\it graded isomorphism of degree $\beta\in Q$}
if $\sigma(W_\alpha)=W'_{\alpha+\beta}$ for all $\alpha\in Q$.
We say that $W$ and $W'$  are {\it graded
isomorphic} if there is a graded isomorphism $\sigma: W\to W'$ of some degree $\beta\in Q$.

\subsection{Results and structure of the paper}\label{s0.3}

In Section~\ref{s1},  for a subgroup $P$ of  an abelian group  $Q$ and  
a simple Lie algebra $\mathfrak{a}$ with a fixed $Q/P$-grading, 
we recall the  loop algebras which are  $Q$-graded simple Lie
algebras $\mathfrak{g}(Q,P,\mathfrak{a})$. If $P$
is  finite and $ \Bbbk$ contains a primitive root of unity of order $|P|$, then
the algebra $\mathfrak{g}(Q,P,\mathfrak{a})$ is a direct sum of
$|P|$ ideals that are isomorphic to $\mathfrak{a}$. If
$\mathrm{char}(\Bbbk)$ does divide $|P|$, then the algebra
$\mathfrak{g}(Q,P,\mathfrak{a})$ is not semisimple as an ungraded algebra.

In Section~\ref{s9}, we assume that $Q$ is  finite and $ \Bbbk$ contains 
a primitive root of unity of order $|P|$. We prove that
any $Q$-graded simple Lie algebra  has to be of the form
$\mathfrak{g}(Q,P,\mathfrak{a})$ for some simple Lie algebra
$\mathfrak{a}$ with a $Q/P$-grading, see Corollary~\ref{Finite2}
(we actually prove a more general result in Theorem~\ref{Finite}).
Our proof is based on a detailed analysis of properties of the character 
group of $Q$ and is directed towards showing existence of a non-graded 
simple ideal in the case when the underlying ungraded Lie algebra is not simple. 
Thanks to the recent classification of all gradings on finite
dimensional simple Lie algebras, see \cite{EK2, E, Y}, we thus 
obtain a full classification of all finite-dimensional $Q$-graded
simple Lie algebras  over any algebraically closed field of
characteristic $0$.

In Section~\ref{s25} we establish a graded analogue of Schur's Lemma. 
It is frequently used in the remainder of the paper. In Section~\ref{s1.3},
for arbitrary additive abelian group $Q$, using Correspondence Theorem~7.1.1 
from \cite{ABFP}, we prove that any $Q$-graded simple Lie algebra 
$\mathfrak{g}$ over an algebraically closed field $\Bbbk$ with $\dim{\mathfrak{g}}<|\Bbbk|$
has to be of the form $\mathfrak{g}(Q,P,\mathfrak{a})$, for some
simple Lie algebra $\mathfrak{a}$ with a $Q/P$-grading, see
Theorem~\ref{Infinite1}.

In Section~\ref{s2}, using our classification of
$Q$-graded simple Lie algebras, we prove a graded analogue of the Weyl's Theorem,
see Theorem~\ref{Weyl}.
Namely, we show that  any $Q$-graded finite dimensional module over a
$Q$-graded semi-simple finite dimensional Lie algebra over an algebraically
closed field of characteristic $0$ is completely graded-reducible.
%Here the crucial observation is that any finite
%dimensional $Q$-graded simple Lie algebra over an algebraically
%closed field $\Bbbk$ of characteristic $0$ is semi-simple after
%forgetting the grading, which is a consequence of Theorem~\ref{Infinite1}.

Finally, in the last section of the paper, we reduce classification of
all $Q$-graded simple modules $W$ over any $Q$-graded Lie algebras 
$\mathfrak{g}$ over  $\Bbbk$ to the study of nonextendable gradings on simple
$\mathfrak{g}$-modules. For any simple $\mathfrak{g}$-module $V$ with a 
$Q/P$-grading, we first construct a $Q$-graded $\mathfrak{g}$-module 
$M(Q, P, V)$ which are called loop modules. We prove that any $Q$-graded 
simple module over $ \mathfrak{g}$    is isomorphic to a   loop module if
$\Bbbk$ has a primitive root of unity of order $|Q|$ in the case of finite 
$Q$; or $\Bbbk$ is algebraically closed with $\dim \mathfrak{g}<|\Bbbk|$, 
see Theorems~\ref{FM0} and \ref{Mod2}. We finish the paper with an 
open problem: find necessary and sufficient conditions 
for two graded simple $\mathfrak{g}$-modules $M(Q,P,V)$ and $M(Q,P',V')$ 
to be isomorphic.

\section{Construction of graded  simple Lie algebras}\label{s1}

\subsection{Construction}\label{s1.1}
We will recall some concepts and results from \cite{ABFP} with different 
notation and discuss some properties on  graded simple Lie algebras.
In this section $\Bbbk$ is an arbitrary field.

Let $Q$ be an abelian group and  $P$ a
subgroup of $Q$. Assume we are given a simple Lie algebra
$\mathfrak{a}$ over $\Bbbk$ with a fixed $Q/P$-grading
\begin{displaymath}
\mathfrak{a}=\displaystyle\bigoplus_{\bar\alpha\in
Q/P}\mathfrak{a}_{\bar\alpha}.
\end{displaymath}
Consider the group algebra $\Bbbk Q$ with the standard basis
$\{t^{\alpha}\,:\,\alpha\in Q\}$ and multiplication $t^{\alpha}
t^{\beta}=t^{\alpha+\beta}$ for all $\alpha,\beta\in Q$.
Then we can form the Lie algebra ${\mathfrak{a}}\otimes \Bbbk Q$.
For $x,y\in \mathfrak{a}$ and $\alpha,\beta\in Q$, we have
\begin{displaymath}
[x\otimes t^{\alpha},y\otimes t^{\beta}]=[x,y]\otimes t^{\alpha+\beta}.
\end{displaymath}
Define the  $Q$-graded Lie algebra
\begin{displaymath}
\mathfrak{g}(Q,P,\mathfrak{a}):= \bigoplus_{\alpha\in
Q}\mathfrak{g}(Q,P,\mathfrak{a})_{\alpha},\quad \text{ where }\quad
\mathfrak{g}(Q,P,\mathfrak{a})_{\alpha}:=\mathfrak{a}_{\bar\alpha}\otimes t^{\alpha}.
\end{displaymath}
For example, $\mathfrak{g}(Q,Q,\mathfrak{a})=\mathfrak{a}\otimes
\Bbbk Q$ (with the obvious $Q$-grading) while
$\mathfrak{g}(Q,\{0\},\mathfrak{a})=\mathfrak{a}$ (with the original
$Q$-grading). These graded Lie algebras $\mathfrak{g}(Q,P,\mathfrak{a})$ 
are called loop algebras in \cite{ABFP}. From the definition it follows that
$\dim\mathfrak{g}(Q,P,\mathfrak{a})=\dim(\mathfrak{a})|P|$ if
$\mathfrak{a}$ is finite-dimensional and $P$ is finite. The following 
result is Lemma~5.1.1 in \cite{ABFP}.

\begin{lemma}\label{Property1}
If $Q$ is an abelian group, $P$ a subgroup of $Q$  and $\mathfrak{a}$ 
is a simple Lie algebra with a $Q/P$-grading, then the algebra
$\mathfrak{g}(Q,P,\mathfrak{a})$ is a $Q$-graded simple Lie algebra.
\end{lemma}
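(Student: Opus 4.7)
The plan is to verify the two conditions in the definition of graded simplicity: that $\mathfrak{g}:=\mathfrak{g}(Q,P,\mathfrak{a})$ is non-abelian, and that every nonzero graded ideal of $\mathfrak{g}$ equals $\mathfrak{g}$.

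Non-commutativity is immediate. Since $\mathfrak{a}$ is simple it is non-abelian, so there exist homogeneous $x\in\mathfrak{a}_{\bar\alpha}$ and $y\in\mathfrak{a}_{\bar\beta}$ with $[x,y]\ne 0$; then $[x\otimes t^\alpha,y\otimes t^\beta]=[x,y]\otimes t^{\alpha+\beta}\ne 0$ in $\mathfrak{g}$.

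For the second condition, let $I\subseteq\mathfrak{g}$ be a nonzero graded ideal. Each homogeneous component has the form $I\cap\mathfrak{g}_\alpha=V_\alpha\otimes t^\alpha$ for some subspace $V_\alpha\subseteq\mathfrak{a}_{\bar\alpha}$, and the defining bracket formula gives $[\mathfrak{a}_{\bar\gamma},V_\alpha]\subseteq V_{\alpha+\gamma}$ for all $\alpha,\gamma\in Q$. Fix some $\alpha_0$ with $V_{\alpha_0}\ne 0$. The key technical claim is: for each $n\ge 1$, homogeneous $y_i\in\mathfrak{a}_{\bar\gamma_i}$, and $v\in V_{\alpha_0}$, the iterated bracket $w:=[y_n,[\ldots,[y_1,v]\ldots]]$ lies in $V_\beta$ for \emph{every} lift $\beta\in Q$ of $\bar\beta:=\bar\alpha_0+\sum_i\bar\gamma_i$. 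To prove this, pick any initial lifts $\gamma_i'\in Q$ of $\bar\gamma_i$ and adjust one of them by $\pi:=(\beta-\alpha_0)-\sum_i\gamma_i'\in P$, obtaining lifts $\gamma_i$ with $\sum_i\gamma_i=\beta-\alpha_0$. Then the same bracket computed inside $\mathfrak{g}$ using $y_i\otimes t^{\gamma_i}$ and $v\otimes t^{\alpha_0}$ equals $w\otimes t^\beta\in I$, forcing $w\in V_\beta$.

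Now set $V':=\sum_{n\ge 1}\mathrm{ad}(\mathfrak{a})^n(V_{\alpha_0})\subseteq\mathfrak{a}$. Since $\mathfrak{a}$ has trivial center, $[\mathfrak{a},V_{\alpha_0}]\ne 0$, so $V'$ is nonzero; being closed under $\mathrm{ad}(\mathfrak{a})$ it is a ($Q/P$-graded) ideal of $\mathfrak{a}$, and hence $V'=\mathfrak{a}$ by simplicity. Combined with the claim above, for every $\beta\in Q$ one obtains $\mathfrak{a}_{\bar\beta}=V'_{\bar\beta}\subseteq V_\beta$, hence $V_\beta=\mathfrak{a}_{\bar\beta}$ and $I=\mathfrak{g}$. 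The main obstacle is conceptual rather than computational: a priori the $V_\beta$ attached to different lifts $\beta$ of the same class in $Q/P$ are unrelated, and the lift-adjustment step is precisely what transfers ungraded ideal-generation in $\mathfrak{a}$ into uniform information across each $P$-coset of $Q$.
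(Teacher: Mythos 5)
Your argument is correct. Note, however, that the paper does not actually prove this lemma in-text: it simply records it as a quotation of Lemma~5.1.1 from the reference [ABFP], so there is no in-paper proof to compare against. Your self-contained argument is a valid and elementary alternative. The essential point, which you identify explicitly, is the lift-adjustment step: given a nonzero homogeneous $v\otimes t^{\alpha_0}\in I$ and an iterated bracket $w=[y_n,[\dots,[y_1,v]\dots]]$ with $y_i\in\mathfrak{a}_{\bar\gamma_i}$, you are free to choose which lift $\gamma_i\in Q$ of $\bar\gamma_i$ to use when realizing the bracket inside $\mathfrak{g}$, and because the $\gamma_i$ may be shifted by arbitrary elements of $P$ you can land $w\otimes t^{\beta}\in I$ for any prescribed lift $\beta$ of $\bar\alpha_0+\sum_i\bar\gamma_i$. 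Combined with simplicity of $\mathfrak{a}$ (so that $\sum_{n\ge 1}\mathrm{ad}(\mathfrak{a})^n(V_{\alpha_0})$ is a nonzero ideal and hence all of $\mathfrak{a}$, using that the center of a simple Lie algebra is trivial), this forces $V_\beta=\mathfrak{a}_{\bar\beta}$ for every $\beta$, hence $I=\mathfrak{g}$. No field hypotheses are used, consistent with the generality of the lemma.
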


Making a parallel with
affine Kac-Moody algebras \cite{K, MP}, it is natural to divide
these algebras into two classes. The algebras
$\mathfrak{g}(Q,Q,\mathfrak{a})$ will be called {\it untwisted
graded simple Lie algebra} while all other algebras will be called
{\it twisted graded simple Lie algebras}.

\subsection{Properties of $\mathfrak{g}(Q,P,\mathfrak{a})$ in the case of finite $Q$}\label{s1.17}

Now we need to establish some properties of the graded simple Lie
algebras $\mathfrak{g}(Q,P,\mathfrak{a})$ for finite groups $Q$. So
in the rest of this section we assume that $Q$ is finite and that
$\Bbbk$ contains a primitive root of unity of order $|Q|$ (which implies 
that $\mathrm{char}(\Bbbk)$ does not divide $|Q|$).

Let $\hat{Q}$ denote the character  group of $Q$, that is the group
of all group homomorphisms $Q\to \Bbbk^{*}$ under the operation of
pointwise multiplication. Note that $\hat{Q}\cong Q$ because of our
assumption on $\mathrm{char}(\Bbbk)$. For any $f\in\hat{Q}$, we
define the associative algebra automorphism
\begin{displaymath}
\tau_{f}:\Bbbk Q\to \Bbbk Q\quad\text{ via }\quad
\tau_f(t^\alpha)=f(\alpha)t^\alpha\quad \text{ for all }\alpha\in Q.
\end{displaymath}
This induces the Lie algebra automorphism
\begin{equation}\label{eq71}
\begin{array}{rrcl}
\tau_f:&\mathfrak{g}(Q,P,\mathfrak{a})&\to& \mathfrak{g}(Q,P,\mathfrak{a})\\
&x_{\bar{\alpha}}\otimes t^\alpha&\mapsto&f(\alpha)x_{\bar{\alpha}}\otimes t^\alpha
\end{array}
\end{equation}
for all $\alpha\in Q$ and $x_{\bar\alpha}\in
{\mathfrak{a}}_{\bar\alpha}$. Note that $\tau_{fg}=\tau_f\tau_g$ for
all $f,g\in\hat{Q}$, in other words, $\hat{Q}$ acts on
$\mathfrak{g}(Q,P,\mathfrak{a})$ via automorphisms $\tau_f$. We will
use the following:

\begin{remark}\label{remabgroup}
{\rm If $Q$ is a finite abelian group, $P$ a subgroup of $Q$,
$\hat{Q}$ the group of characters of $Q$ over a field $\Bbbk$ such
that $\mathrm{char}(\Bbbk)$ does not divide $|Q|$ and
$P^{\perp}:=\{f\in \hat{Q}\,:\, f(\alpha)=1\text{ for all }\alpha\in
P\}$, then $|\hat{Q}/P^{\perp}|=|P|$. Indeed, because of our
assumption on $\Bbbk$, we know that $|\hat{P}|=|P|$. Therefore it is
enough to prove that each character of $P$ can be extended to a
character of $Q$. The latter follows directly from the Frobenius
reciprocity. }
\end{remark}

\begin{lemma}\label{Property19}
Assume that  $Q$ is  finite and $\Bbbk$ contains a primitive  root of unity of order $|Q|$.
Then the algebra $\mathfrak{g}(Q,P,\mathfrak{a})$ is a direct sum of
$|P|$ ideals. Each of these ideals is $Q/P$-graded and, moreover,
isomorphic to $\mathfrak{a}$ as $Q/P$-graded Lie algebras.
\end{lemma}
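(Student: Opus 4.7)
The hypothesis that $\Bbbk$ contains a primitive $|Q|$-th root of unity ensures it also contains a primitive $|P|$-th one and that $\mathrm{char}(\Bbbk)\nmid|P|$, so $\Bbbk P$ is split semisimple. The plan is to decompose $\mathfrak g(Q,P,\mathfrak a)$ into $|P|$ ideals using the primitive orthogonal idempotents
\[
e_\chi := \frac{1}{|P|}\sum_{p\in P}\chi(p)^{-1}t^{p}\in\Bbbk P\subseteq\Bbbk Q,\qquad \chi\in\hat P,
\]
which satisfy $e_\chi e_{\chi'}=\delta_{\chi,\chi'}e_\chi$, $\sum_{\chi\in\hat P}e_\chi=1$, and $t^{p}e_\chi=\chi(p)e_\chi$ for $p\in P$; there are precisely $|P|$ of them.

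I would first observe that left multiplication on the group-algebra tensor factor by any $c\in\Bbbk P$ is a Lie algebra endomorphism of $\mathfrak g(Q,P,\mathfrak a)$, since
\[
[\,c\cdot(a\otimes t^\alpha),\, b\otimes t^\beta\,]=[a,b]\otimes c\,t^{\alpha+\beta}=c\cdot[a\otimes t^\alpha,\, b\otimes t^\beta],
\]
and multiplication by $t^p$ ($p\in P$) preserves $\mathfrak g(Q,P,\mathfrak a)$ (as $\overline{\alpha+p}=\bar\alpha$). Setting $\mathfrak I_\chi:=e_\chi\cdot\mathfrak g(Q,P,\mathfrak a)$, the orthogonality of the $e_\chi$ forces $[\mathfrak I_\chi,\mathfrak I_{\chi'}]=0$ whenever $\chi\ne\chi'$, so each $\mathfrak I_\chi$ is an ideal and $\mathfrak g(Q,P,\mathfrak a)=\bigoplus_{\chi\in\hat P}\mathfrak I_\chi$ decomposes into $|P|$ ideals. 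Because the $e_\chi$ shift degrees only by elements of $P$, each $\mathfrak I_\chi$ is $Q/P$-graded with $(\mathfrak I_\chi)_{\bar\gamma}=\mathfrak a_{\bar\gamma}\otimes t^{s(\bar\gamma)}e_\chi$, where $s\colon Q/P\to Q$ is any fixed set-theoretic section of the projection.

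To identify each $\mathfrak I_\chi$ with $\mathfrak a$ as a $Q/P$-graded Lie algebra, I would invoke Remark~\ref{remabgroup} to extend $\chi\in\hat P$ to a character $\tilde\chi\in\hat Q$ and define
\[
\Phi_\chi\colon\mathfrak a\longrightarrow\mathfrak I_\chi,\qquad \Phi_\chi(a):=\tilde\chi(s(\bar\gamma))^{-1}\,a\otimes t^{s(\bar\gamma)}e_\chi\quad\text{for }a\in\mathfrak a_{\bar\gamma}.
\]
This is a $Q/P$-graded $\Bbbk$-linear bijection by a dimension count. To check $[\Phi_\chi(a),\Phi_\chi(b)]=\Phi_\chi([a,b])$, write $s(\bar\gamma)+s(\bar\delta)=s(\overline{\gamma+\delta})+\zeta(\bar\gamma,\bar\delta)$ with $\zeta(\bar\gamma,\bar\delta)\in P$; using $t^pe_\chi=\chi(p)e_\chi$ and $\tilde\chi|_P=\chi$, multiplicativity of $\tilde\chi$ collapses the bracket on the left into the desired form. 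The sole nontrivial point is this bracket check: the scalars $\tilde\chi\circ s$ must trivialize the $\Bbbk^*$-valued $2$-cocycle $\chi\circ\zeta$ on $Q/P$ coming from the extension $0\to P\to Q\to Q/P\to 0$, and this is possible \emph{precisely} because every character of $P$ extends to a character of $Q$, which is the content of Remark~\ref{remabgroup} and where the hypothesis on roots of unity enters in an essential way. I expect this to be the main (and really the only) obstacle in the argument.
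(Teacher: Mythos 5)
Your proof is correct but genuinely different from the paper's. The paper constructs one ideal $I=\bigoplus_{\bar\alpha}\mathfrak a_{\bar\alpha}\otimes\underline t^\alpha$ with $\underline t^\alpha=t^\alpha\sum_{\beta\in P}t^\beta$ (which is, up to the scalar $|P|$, your $e_{\chi_0}$-block for the trivial character $\chi_0$), observes $I\cong\mathfrak a$, and then \emph{transports} $I$ around via the automorphisms $\tau_f$, $f\in\hat Q$; it computes the number of translates by showing $\mathrm{Inv}(I)\subset P^\perp$ and invoking Remark~\ref{remabgroup}, and then compares supports to see the translates exhaust $\mathfrak g(Q,P,\mathfrak a)$. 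You instead produce \emph{all} $|P|$ blocks at once by decomposing the $\Bbbk P$-tensor-factor via the primitive orthogonal idempotents $e_\chi$, which makes the direct-sum decomposition and the vanishing of cross-brackets immediate, and you spell out an explicit graded isomorphism $\Phi_\chi\colon\mathfrak a\to\mathfrak I_\chi$ using an extension $\tilde\chi\in\hat Q$ of $\chi$ — a point the paper treats tersely (its map $a\mapsto a\otimes\underline t^\alpha$ multiplies brackets by $|P|$, so it needs the same normalization you use, implicitly). Both arguments ultimately rest on Remark~\ref{remabgroup} (extendability of characters of $P$ to $Q$), in your case to trivialize the $2$-cocycle $\chi\circ\zeta$, and in the paper's to show $|\hat Q/\mathrm{Inv}(I)|\ge|P|$; and your remark that $\tau_f(e_\chi)=e_{\chi\cdot(f|_P)^{-1}}$ shows the two points of view are compatible. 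Your route is more self-contained and explicit here; the paper's route has the strategic advantage that the objects $\tau_f$, $\mathrm{Inv}(I)$ and $\mathbf I$ it introduces are the main tools for the harder classification results in Section~\ref{s9}, so it sets up that machinery on this easy example.
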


\begin{proof}
For $\alpha\in Q$, set
\begin{displaymath}
\underline{t}^{\alpha}:=t^{\alpha}\sum_{\beta\in P}t^{\beta}.
\end{displaymath}
Then, for any $\alpha,\alpha'\in Q$, we have
$\underline{t}^{\alpha}=\underline{t}^{\alpha'}$ if and only if
$\alpha-\alpha'\in P$. Consider the vector space
\begin{equation}
I=\displaystyle\bigoplus_{\bar\alpha\in
Q/P}{\mathfrak{a}}_{\bar\alpha}\otimes \underline{t}^\alpha,
\end{equation}
which is well-defined because of the observation in the previous
sentence. Since
$\underline{t}^{\alpha}{t}^{\beta}=\underline{t}^{\alpha+\beta}$, for
all $\alpha,\beta\in Q$, the space $I$ is an ideal. Note that
$[I,I]\neq 0$ since $\mathfrak{a}$ is a simple Lie algebra and
$\mathrm{char}(\Bbbk)$ does not divide $|Q|$ (and thus it does not
divide $|P|$ either, which implies
$\underline{t}^{\alpha}\underline{t}^{\alpha}=|P|\underline{t}^{\alpha}\neq
0$). It follows that $I\cong \mathfrak{a}$ as a $Q/P$-graded Lie
algebra. Simplicity of $\mathfrak{a}$ even implies that $I$ is a
minimal ideal.

Define the {\it invariant subgroup $\mathrm{Inv}(I)$ of $I$} as
\begin{displaymath}
\mathrm{Inv}(I)=\{f\in \hat{Q}\,|\,\tau_f(I)=I\} ,
\end{displaymath}
which is a subgroup of $\hat{Q}$. Then the set
$\mathbf{I}:=\{\tau^{\alpha}(I)\,:\, \alpha\in \hat{Q}\}$ consist of
$|\hat{Q}/\mathrm{Inv}(I)|$ different minimal ideals of
$\mathfrak{g}(Q,P,\mathfrak{a})$. From the definitions it follows
that
\begin{displaymath}
\mathrm{Inv}(I)\subset P^{\perp}:=\{f\in \hat{Q}\,\vert\,
f(\alpha)=1\text{ for all }\alpha\in P\}
\end{displaymath}
and hence $|\hat{Q}/\mathrm{Inv}(I)|\geq|\hat{Q}/P^{\perp}| =|P|$,
see Remark~\ref{remabgroup} for the latter equality. Comparing the
number of non-zero homogeneous components in
$\mathfrak{g}(Q,P,\mathfrak{a})$ and in the subspace
\begin{displaymath}
\bigoplus _{J\in\mathbf{I}}J\subset \mathfrak{g}(Q,P,\mathfrak{a}),
\end{displaymath}
we deduce that these two algebras coincide. The statement of the lemma follows.
\end{proof}

The ideals in Lemma \ref{Property19} are the only ones of the Lie algebra $\mathfrak{g}$.

\begin{example}\label{example0}
{\rm In case  $Q$ is  finite and $\mathrm{char}(\Bbbk)$ does divide
$|Q|$, the algebra $\mathfrak{g}(Q,P,\mathfrak{a})$ is not a direct
sum of simple ideals in general. For example, let us consider the
case that $\mathrm{char}(\Bbbk)=|Q|$, $Q=\mathbb{Z}_p$ and $P=0$.
Since $(t^{\overline{1}})^p-1=(t^{\overline{1}}-1)^p$, we have that
\begin{displaymath}
\mathfrak{g}(Q,P,\mathfrak{a})\simeq
\mathfrak{a}\otimes \left(\mathbb{C}[x]/\langle x^p\rangle\right),
\end{displaymath}
where $x=t^{\overline{1}}-1$.
The latter algebra has an abelian ideal $\mathfrak{a}\otimes x^{p-1}$ and a
nilpotent ideal $\mathfrak{a}\otimes x\mathbb{C}[x]$. The latter is,
in fact, a maximal ideal.
}
\end{example}

%\begin{example}\label{example}
%{\rm Let $Q=\mathbb{Z}_2\times \mathbb{Z}_{16}$ and
%${\mathfrak{a}}=\mathfrak{sl}_2$ with standard basis $\{e, h, f\}$.
%Let $P=\langle(1,4)\rangle$. Then $Q/P\cong \mathbb{Z}_8$ with
%generator $\overline{(0,1)}$. Consider the following
%$\mathbb{Z}_8$-grading on $\mathfrak{a}$:
%\begin{displaymath}
%\deg(h)={\bar 0},\quad \deg(e)={\bar 1},\quad \deg(f)={\bar 7}.
%\end{displaymath}
%Then the non-zero homogeneous components of
%$\mathfrak{g}(Q,P,\mathfrak{a})$ are:
%\begin{gather*}
%{\mathfrak{a}}_{\overline{(0,0)}}\otimes 1,
%\quad {\mathfrak{a}}_{\overline{(0,1)}}\otimes t^{(0,1)},
%\quad {\mathfrak{a}}_{\overline{(0,7)}}\otimes t^{(0,7)};\\
%{\mathfrak{a}}_{\overline{(1,4)}}\otimes t^{(1,4)},
%\quad {\mathfrak{a}}_{\overline{(1,5)}}\otimes t^{(1,5)},
%\quad {\mathfrak{a}}_{\overline{(1,11)}}\otimes t^{(1,11)};\\
%{\mathfrak{a}}_{\overline{(0,8)}}\otimes t^{(0,8)},
%\quad {\mathfrak{a}}_{\overline{(0,9)}}\otimes t^{(0,9)},
%\quad {\mathfrak{a}}_{\overline{(0,15)}}\otimes t^{(0,15)};\\
%{\mathfrak{a}}_{\overline{(1,12)}}\otimes t^{(1,12)},
%\quad {\mathfrak{a}}_{\overline{(1,13)}}\otimes t^{(1,13)},
%\quad {\mathfrak{a}}_{\overline{(1,3)}}\otimes t^{(1,3)}.
%\end{gather*}
%}
%\end{example}

\section{Graded  simple Lie algebras: the case of finite $Q$}\label{s9}

\subsection{Preliminaries}\label{s1.2}

In this section, for a finite abelian group $Q$, we will characterize all
$Q$-graded simple Lie algebras $\mathfrak{g}$ (without assuming that 
$\mathfrak{g}$ is central or with split centroid)  
when $\Bbbk$ is an arbitrary field containing  a primitive root of 
unity of order $|Q|$. That is, in this setup we establish an analogue 
of Correspondence Theorem~7.1.1 from \cite{ABFP}. We start with an arbitrary additive abelian group $Q$ at this moment, 
not necessarily finite,  and a $Q$-graded simple Lie algebra 
$$\mathfrak{g}=\displaystyle\bigoplus_{\alpha\in Q}\mathfrak{g}_\alpha.$$

Every $x\in \mathfrak{g}$ can be written in the form
$x=\displaystyle\sum_{\alpha\in Q}x_\alpha$
where $x_\alpha\in\mathfrak{g}_\alpha$.
In what follows the notation $x_\alpha$, for some $\alpha\in Q$,
always means $x_\alpha\in\mathfrak{g}_\alpha$.
We define the {\it support}
of $x$ as
\begin{displaymath}
\mathrm{supp}(x):=\{\alpha\in Q\,|\,x_\alpha\ne0\}.
\end{displaymath}
Similarly, we can define $\mathrm{supp}(X)$ for any nonempty subset
$X\subset \mathfrak{g}$. Recall that we have assumed that
 the grading on $\mathfrak{g}$ is {\em minimal} in the sense
that $\mathrm{supp}(\mathfrak{g})$ generates $Q$.

Classification of graded simple Lie algebras for which the
underlying Lie algebra $\mathfrak{g}$ is simple, reduces to
classification of gradings on simple Lie algebras. Grading on simple
Lie algebras are, to some extent,  well-studied, see \cite{EK,EK2},
and we will not study this problem in the present paper. Instead, now we
assume that $\mathfrak{g}$ is not simple.

For $\alpha\in Q$, define $\pi_{\alpha}:\mathfrak{g}\to\mathfrak{g}_{\alpha}$
as the projection with respect to the graded decomposition.
Take a non-homogeneous non-zero proper ideal $I$ of $\mathfrak{g}$. As we progress we may re-choose this $I$ later in this section.
Define the {\it size} of $I$ as
\begin{displaymath}
\mathrm{size}(I)=
\min\big\{|\mathrm{supp}(x)|\,:\,x\in I\setminus\{0\}\big\}.
\end{displaymath}

\begin{lemma}\label{Property12}
{\small\hspace{2mm}}

\begin{enumerate}[$($a$)$]
\item\label{Property12.1} For any nonzero $x\in I$, we  have
$|\mathrm{supp}(x)|>1$.
\item\label{Property12.2} We have $\pi_{\alpha}(I)=\mathfrak{g}_\alpha$, for each $\alpha\in Q$.
\end{enumerate}
\end{lemma}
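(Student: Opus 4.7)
The plan is to associate two natural graded ideals to the non-homogeneous proper ideal $I$ and then invoke the graded simplicity of $\mathfrak{g}$. For part (a), the relevant object is the graded part of $I$, namely $\bigoplus_{\alpha\in Q}(I\cap \mathfrak{g}_\alpha)$. For part (b), it is the sum of the projections, $\bigoplus_{\alpha\in Q}\pi_\alpha(I)$. In each case the key point is that, since the bracket $[\mathfrak{g}_\beta,\mathfrak{g}_\alpha]\subset \mathfrak{g}_{\alpha+\beta}$ respects the grading, both objects are graded ideals; then graded simplicity of $\mathfrak{g}$ leaves only two options, and the hypotheses on $I$ rule out one of them.

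More precisely, for (a) I would show that $I\cap\mathfrak{g}_\alpha=0$ for every $\alpha\in Q$. If $y\in I\cap\mathfrak{g}_\alpha$ and $z_\beta\in\mathfrak{g}_\beta$, then $[z_\beta,y]$ lies in $I$ (ideal) and in $\mathfrak{g}_{\alpha+\beta}$ (grading), hence in $I\cap\mathfrak{g}_{\alpha+\beta}$. Therefore $\bigoplus_{\alpha}(I\cap\mathfrak{g}_\alpha)$ is a graded ideal of $\mathfrak{g}$ contained in $I$. Since $I$ is proper, this graded ideal is not all of $\mathfrak{g}$, so by graded simplicity it is $0$. This says exactly that no nonzero element of $I$ is supported on a single $\alpha$, i.e.\ $|\mathrm{supp}(x)|>1$ for every nonzero $x\in I$.

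For (b) I would use that the projections $\pi_\alpha$ behave well with brackets: for $x\in\mathfrak{g}$ and $z_\beta\in\mathfrak{g}_\beta$ one has $\pi_{\alpha+\beta}([z_\beta,x])=[z_\beta,\pi_\alpha(x)]$, because the only component of $x$ that can contribute to degree $\alpha+\beta$ after bracketing with $z_\beta$ is the degree-$\alpha$ component. Consequently, if $y=\pi_\alpha(x)$ with $x\in I$, then $[z_\beta,y]=\pi_{\alpha+\beta}([z_\beta,x])\in \pi_{\alpha+\beta}(I)$. Thus
\begin{displaymath}
J:=\bigoplus_{\alpha\in Q}\pi_\alpha(I)
\end{displaymath}
is a graded ideal of $\mathfrak{g}$, and it is nonzero because $I$ is nonzero. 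By graded simplicity $J=\mathfrak{g}$, and comparing homogeneous components gives $\pi_\alpha(I)=\mathfrak{g}_\alpha$ for every $\alpha\in Q$.

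The whole argument is an exercise in playing graded simplicity against a chosen ideal, and I do not expect any real obstacle; the only small care needed is the identity $\pi_{\alpha+\beta}([z_\beta,x])=[z_\beta,\pi_\alpha(x)]$ used in (b), which is immediate from the grading axiom $[\mathfrak{g}_\beta,\mathfrak{g}_\gamma]\subset\mathfrak{g}_{\beta+\gamma}$.
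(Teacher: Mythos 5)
Your proposal is correct and follows essentially the same route as the paper: for (a) you use graded simplicity on the graded ideal $\bigoplus_\alpha (I\cap\mathfrak{g}_\alpha)\subseteq I$ (the paper phrases this as the set of elements of $I$ of support $\le 1$, but plainly intends its span), and for (b) you use graded simplicity on the nonzero graded ideal $\sum_\alpha\pi_\alpha(I)$, exactly as the paper does.
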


\begin{proof}
Claim~\eqref{Property12.1} follows from the observation that the set
\begin{displaymath}
J=\{x\in I\,:\,|\mathrm{supp}(x)|\le 1\}
\end{displaymath}
is a nontrivial  graded ideal  of $\mathfrak{g}$ which has to be
zero as $\mathfrak{g}$ is graded simple.

To prove claim~\eqref{Property12.2}, let
$\bar I=\displaystyle\sum_{\alpha\in Q}\pi_{\alpha}(I)$. As $\mathfrak{g}_\alpha  \pi_\beta(I) \subset \pi_{\alpha+\beta}(I)$, it
follows that $\bar I$ is a nonzero graded ideal of $\mathfrak{g}$ which has to be
$\mathfrak{g}$ itself since $\mathfrak{g}$ is graded simple. This completes the proof.
\end{proof}

\subsection{Auxiliary lemmata}\label{s1.15}

From now on in this section,  we assume that $Q=Q_0\times Q_1$ 
where $Q_0$ is finite,  $\Bbbk$ contains a primitive 
root of unity of order $|Q_0|$  and any
ideal of $\mathfrak{g}$ is $Q_1$-graded. Then we have the graded
isomorphisms $\tau_f:\mathfrak{g} \to \mathfrak{g}$, for any
$f\in\hat{Q}_0$ defined as in \eqref{eq71} (with the convention that
$f(\alpha)=1$ for all $\alpha\in Q_1$).
\vspace{5mm}

\begin{lemma}\label{Property2}
{\small\hspace{2mm}}

\begin{enumerate}[$($a$)$]
\item\label{Property2.1} An ideal  $J$
of $\mathfrak{g}$ is $Q$-graded if and only if 
$\tau_f(J)\subset J$, for all
$f\in\hat{Q}_0$.
\item\label{Property2.2} The center of $\mathfrak{g}$ is zero.
\end{enumerate}
\end{lemma}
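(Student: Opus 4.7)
The proof hinges on the classical character-averaging argument, made available here by the hypotheses that $Q_0$ is finite and $\Bbbk$ contains a primitive $|Q_0|$-th root of unity (so in particular $\mathrm{char}(\Bbbk)\nmid|Q_0|$ and characters of $Q_0$ satisfy the usual orthogonality relations).

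For part \eqref{Property2.1}, the forward direction is immediate: if $J=\bigoplus_{\alpha\in Q}(J\cap\mathfrak{g}_\alpha)$, then for $x_\alpha\in J\cap\mathfrak{g}_\alpha$ we have $\tau_f(x_\alpha)=f(\alpha_0)x_\alpha\in J$, so $\tau_f(J)\subset J$ for every $f\in\hat{Q}_0$. For the converse, I would fix $x\in J$ and decompose it along the $Q_0$-direction as $x=\sum_{\alpha_0\in Q_0} y_{\alpha_0}$ with $y_{\alpha_0}\in\bigoplus_{\alpha_1\in Q_1}\mathfrak{g}_{(\alpha_0,\alpha_1)}$. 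The point is to isolate each $y_{\alpha_0}$ inside $J$. Since $\tau_f(x)=\sum_{\alpha_0} f(\alpha_0)\,y_{\alpha_0}$ and $\tau_f(J)\subset J$ for all $f\in\hat{Q}_0$, the element
\[
\frac{1}{|Q_0|}\sum_{f\in\hat{Q}_0} f(\alpha_0)^{-1}\tau_f(x)
\]
lies in $J$; by orthogonality of characters (valid thanks to our assumption on $\Bbbk$) this sum collapses to $y_{\alpha_0}$. Thus each $y_{\alpha_0}\in J$, so $J$ is $Q_0$-graded. Combined with the standing hypothesis that every ideal of $\mathfrak{g}$ is $Q_1$-graded, decomposing each $y_{\alpha_0}$ further in the $Q_1$-direction puts every $Q$-homogeneous component of $x$ in $J$, showing $J$ is $Q$-graded.

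For part \eqref{Property2.2}, let $Z$ denote the center of $\mathfrak{g}$. Since $\tau_f$ is a Lie algebra automorphism, it maps $Z$ to $Z$; so $\tau_f(Z)\subset Z$ for every $f\in\hat{Q}_0$. The ideal $Z$ is $Q_1$-graded by the standing hypothesis, hence by part \eqref{Property2.1} it is $Q$-graded. Because $\mathfrak{g}$ is graded simple, $Z$ is either $0$ or all of $\mathfrak{g}$, and the latter is ruled out by the definition of graded simplicity (which requires $\mathfrak{g}$ to be noncommutative). Therefore $Z=0$.

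The only step requiring genuine care is the character-averaging computation in \eqref{Property2.1}; everything else is a direct unraveling of the definitions and the graded simplicity of $\mathfrak{g}$. The argument relies crucially on the duality $|\hat{Q}_0|=|Q_0|$ together with the orthogonality relations on $\hat{Q}_0$, both of which are guaranteed by the hypothesis on the existence of a primitive $|Q_0|$-th root of unity in $\Bbbk$.
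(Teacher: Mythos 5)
Your proposal is correct and takes the same route as the paper: the authors simply state that part (a) is clear and deduce part (b) from (a) together with invariance of the center under automorphisms and graded simplicity. Your character-averaging computation merely spells out the standard argument the paper leaves implicit.
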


\begin{proof}
Claim~\eqref{Property2.1} is clear. Claim~\eqref{Property2.2}
follows from claim~\eqref{Property2.1} since the center is an ideal
and is invariant under all automorphisms including $\tau_f$ for 
$f\in\hat{Q}_0$.
\end{proof}

For convenience, we redefine
\begin{displaymath}
\mathrm{Inv}(I)=\{f\in \hat{Q}_0\,|\,\tau_f(I)=I\}
\end{displaymath} and set
$$P_0={\mathrm{Inv}}(I)^{\perp}:=\{\alpha\in Q_0 : f(\alpha)=1, {\text{ for all }}
f\in \mathrm{Inv}(I)\}.$$ From Remark~\ref{remabgroup} it follows that
$|P_0|=|\hat{Q}_0/\mathrm{Inv}(I)|$. Then we know that 
$\mathrm{Inv}(I)$ is a  proper subgroup of $Q_0$.

Now we can consider $\mathfrak{g}$ as a $Q/P_0$-graded Lie algebra.
The homogeneous spaces in this graded Lie algebra are indexed by
$\bar{\alpha}=\alpha+P_0$, where $\alpha\in Q$. Note that all these homogeneous
components are eigenspaces for each $\tau_f$, where $f\in \mathrm{Inv}(I)$.
For $\alpha\in Q$, we thus have
\begin{equation}\label{eqn5}
\mathfrak{g}_{\bar{\alpha}}=\bigoplus_{\beta\in P_0}\mathfrak{g}_{\alpha+\beta}.
\end{equation}
The decomposition
\begin{displaymath}
\mathfrak{g}=\bigoplus_{\bar{\alpha}\in Q/P_0}\mathfrak{g}_{\bar{\alpha}}
\end{displaymath}
is the decomposition of $\mathfrak{g}$
into a direct sum of common eigenvectors with respect to the action of all
$\tau_f$, where $f\in\mathrm{Inv}(I)$. Since $I$ is preserved by all such
$\tau_f$,  we obtain
\begin{displaymath}
I_{\bar{\alpha}}=I\cap \mathfrak{g}_{\bar{\alpha}}.
\end{displaymath}

For $\mathbf{I}:=\{\tau_f(I)\,:\, f\in \hat{Q}_0\}$, we have that
$|\mathbf{I}|=|\hat{Q}_0/\mathrm{Inv}(I)|$. 
Now we can prove the following statement.

\begin{lemma}\label{Existence}
Assume that  $\mathfrak{g}$ is not simple. Then there exists a  
non-homogeneous non-zero proper ideal $I$ of $\mathfrak{g}$ such that different 
elements in $\mathbf{I}$ have zero intersection.
\end{lemma}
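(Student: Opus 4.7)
Since $\mathfrak{g}$ is not simple, a proper nonzero ideal $I_0 \subset \mathfrak{g}$ exists; by hypothesis every ideal is $Q_1$-graded, so if $I_0$ were $Q_0$-graded as well it would be $Q$-graded and therefore $0$ or $\mathfrak{g}$ by graded simplicity, a contradiction. Hence $I_0$ is non-homogeneous and $\mathrm{Inv}(I_0) \subsetneq \hat{Q}_0$. I would then form the collection
\[
\mathcal{L} := \Bigl\{\bigcap_{f \in S} \tau_f(I_0) \,:\, \emptyset \ne S \subseteq \hat{Q}_0 \Bigr\},
\]
which is a finite set of ideals of $\mathfrak{g}$ (of cardinality at most $2^{|\hat{Q}_0|}$), contains $I_0$, and is closed under pairwise intersection. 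The identity $\tau_h\bigl(\bigcap_{f \in S}\tau_f(I_0)\bigr) = \bigcap_{f \in hS}\tau_f(I_0)$ shows that $\mathcal{L}$ is also stable under the $\hat{Q}_0$-action by $\tau$. I would then pick $I \in \mathcal{L}$ minimal among nonzero members, which exists because $\mathcal{L}$ is finite and $I_0$ is a nonzero element. This $I$ is nonzero, proper (since $I \subseteq I_0$), and, by repeating the argument used for $I_0$, non-homogeneous.

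To verify the disjointness property, fix any $f \notin \mathrm{Inv}(I)$ and set $K := I \cap \tau_f(I)$. Because $\tau_f(I) \in \mathcal{L}$ and $\mathcal{L}$ is closed under intersection, $K \in \mathcal{L}$; moreover $K \subseteq I$, so minimality of $I$ leaves only $K = 0$ or $K = I$. In the second case $I \subseteq \tau_f(I)$ and iteration gives
\[
I \subseteq \tau_f(I) \subseteq \tau_{f^2}(I) \subseteq \cdots \subseteq \tau_{f^n}(I) = I,
\]
where $n$ is the (finite) order of $f$ in $\hat{Q}_0$. This forces $\tau_f(I) = I$, i.e.\ $f \in \mathrm{Inv}(I)$, contrary to assumption. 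Therefore $K = 0$, which translates into $\tau_{f_1}(I)\cap\tau_{f_2}(I)=0$ whenever $\tau_{f_1}(I)\ne\tau_{f_2}(I)$, as required.

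I do not anticipate a serious obstacle. The one subtlety worth noting is that the bare orbit $\{\tau_f(I_0) : f \in \hat{Q}_0\}$ is not in general closed under intersection, so one really has to enlarge it to the intersection-closure $\mathcal{L}$ before picking a minimal element --- trying to minimize inside the orbit (or minimize $|\mathbf{I}|$, or minimize $\mathrm{size}(I)$) does not obviously produce a contradiction. The finiteness of $\hat{Q}_0$ is essential in two places: it makes $\mathcal{L}$ a finite poset (so the minimum exists without invoking any chain condition on ideals of $\mathfrak{g}$, which could be infinite-dimensional over the $Q_1$-direction), and it gives each $\tau_f$ finite order, closing the chain argument at the end.
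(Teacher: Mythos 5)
Your proof is correct, and it takes a genuinely different and arguably cleaner route than the paper's. The paper argues by modification: starting from a non-homogeneous proper ideal $I$ with $I\cap\tau_f(I)=I_1\ne0$, it introduces the quantity $\mathrm{size}(\cdot)$ (minimal support cardinality), shows via a delicate ``$x=\tau_f(y)\in\Bbbk y$'' observation that passing to the subideal $I_2\subseteq I_1$ generated by minimal-support elements strictly increases either $\mathrm{size}$ or $\mathrm{Inv}$, and then invokes uniform bounds on both to force termination. You instead pass to the intersection-closure $\mathcal{L}$ of the $\hat{Q}_0$-orbit of $I_0$, which is finite (here finiteness of $\hat{Q}_0$ does all the work), pick a minimal nonzero member $I$, and get disjointness for free: $I\cap\tau_f(I)\in\mathcal{L}$ is contained in $I$, so it is $0$ or $I$, and the second case is killed by the finite order of $f$ (equivalently, by applying minimality once more to $\tau_{f^{-1}}(I)\subseteq I$). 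Your version avoids both the $\mathrm{size}$ bookkeeping and the paper's implicit bound $\mathrm{size}(I)\le|P_0|$, and your remark that minimizing only inside the orbit does not obviously work is a fair observation about why the closure $\mathcal{L}$ is the right object.

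One small inaccuracy to fix: you justify that $I$ is proper by writing ``$I\subseteq I_0$,'' but an element $\bigcap_{f\in S}\tau_f(I_0)$ of $\mathcal{L}$ is contained in $I_0$ only when $1\in S$. The conclusion still holds --- $I\subseteq\tau_f(I_0)\subsetneq\mathfrak{g}$ for any $f\in S$ --- but the stated reason should be corrected. The rest (non-homogeneity of $I$ via graded simplicity plus Lemma~\ref{Property2}\eqref{Property2.1}, $\tau$-stability of $\mathcal{L}$, the translation from ``$f\notin\mathrm{Inv}(I)\Rightarrow I\cap\tau_f(I)=0$'' to pairwise disjointness of $\mathbf{I}$) is sound.
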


\begin{proof} 
Suppose  $J\cap J'\ne 0$ for some $J\neq J'$ in $\mathbf{I}$. We may assume that  $I\cap \tau_f(I)=I_1\ne0$ for some
$f\in\hat{Q}_0\setminus \mathrm{Inv}(I).$ We see that $I_1$ is  a  non-homogeneous non-zero
proper ideal of $\mathfrak{g}$, and 
$\mathrm{Inv}(I)\subseteq \mathrm{Inv}(I_1)\subset \hat Q_0$.

Let  $r=\mathrm{size}(I)$ and $r_1=\mathrm{size}(I_1)$.
Note that $r\le |P_0|$ and $r_1\le |P_0|$.
Let $I_2$ be the subideal of $I_1$ generated by all $x\in I_1$ with
$|\mathrm{supp}(x)|=r_1$.  Take a nonzero $x\in I_1$ with
$|\mathrm{supp}(x)|=r_1$. Then  $x=\tau_f(y)$ for some $y\in I$.
If $x\not\in\Bbbk y$, then $I$ contains a linear combination of
$x$ and $y$ which has strictly smaller support. This means that $r<r_1$.
Consequently, in the case $r=r_1$, the previous argument shows  that
$\tau^{-1}_f(x)\in I_1$, for any $x\in I_1$ with
$|\mathrm{supp}(x)|=r_1$. This means that $\tau_f(I_2)\subset I_2$ and thus
either $\mathrm{Inv}(I_2)$ properly contains
$\mathrm{Inv}(I)$ or $r_1>r$.

Now we change our original ideal $I$ to $I_2$. In this way we either
increase the size of the ideal or the cardinality of the invariant 
subgroup and start all over
again. Since both the size and the cardinality of the invariant
subgroup are uniformly bounded, the process will terminate in a
finite number of steps resulting in a new ideal  of
$\mathfrak{g}$ which will have the property that different 
elements in the corresponding $\mathbf{I}$ have zero intersection.
\end{proof}

Since $Q_0$ is finite, we may further take a non-homogeneous 
non-zero proper ideal $I$ of $\mathfrak{g}$ which has the property
described in Lemma~\ref{Existence} and such that $\mathrm{Inv}(I)$ is 
minimal with respect to inclusion. Then $I$ is a $Q/P_0$-graded Lie algebra
\begin{displaymath}
I=\bigoplus_{\bar{\alpha}\in Q/P_0}I_{\bar{\alpha}}.
\end{displaymath}
From \eqref{eqn5} it follows that $\mathrm{size}(I)\le |P_0|$.
Directly from the definitions we also have $\mathrm{Inv}(I)\subset Q_0$.

\begin{lemma}\label{Property3} We have:
\begin{enumerate}[$($a$)$]

\item\label{Property3.1}  $[J, J']=0$
for any $J\neq J'$ in $\mathbf{I}$;
\item\label{Property3.2} $\displaystyle\mathfrak{g}=\bigoplus_{J\in \mathbf{I}} J$;
\item\label{Property3.3} $I$ is a simple Lie algebra.
\end{enumerate}
\end{lemma}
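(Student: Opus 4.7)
For part (a), the plan is to use that each $\tau_f$ is a Lie algebra automorphism, so every $J\in\mathbf{I}$ is an ideal of $\mathfrak{g}$. For $J\neq J'$ in $\mathbf{I}$, the bracket $[J,J']$ is contained in both $J$ and $J'$, hence in $J\cap J'$, which is zero by the defining property of $I$ from Lemma~\ref{Existence}. For part (b), set $\tilde I:=\sum_{J\in\mathbf{I}} J$; this is an ideal of $\mathfrak{g}$ preserved by every $\tau_f$, since $\hat{Q}_0$ permutes $\mathbf{I}$. By Lemma~\ref{Property2}(a), $\tilde I$ is $Q$-graded, so graded simplicity of $\mathfrak{g}$ forces $\tilde I=\mathfrak{g}$. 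To upgrade this sum to a direct sum, fix $J_0\in\mathbf{I}$ and set $L:=J_0\cap\sum_{J\neq J_0}J$. Part (a) gives $[J_0,L]=0$, and since $L\subseteq J_0$ this makes $L$ abelian; applying (a) once more shows $[J,L]\subseteq[J,J_0]=0$ for every $J\neq J_0$. Hence $L$ centralizes all of $\mathfrak{g}$ and therefore lies in $Z(\mathfrak{g})=0$, by Lemma~\ref{Property2}(b).

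For part (c), the plan is to assume $I$ has a nonzero proper ideal $K$ and derive a contradiction from the minimal choice of $\mathrm{Inv}(I)$. First observe that $[I,I]\neq 0$: otherwise (a) would make $I$ an abelian ideal commuting with every other $J\in\mathbf{I}$, forcing $I\subseteq Z(\mathfrak{g})=0$. Next, (a) and (b) give $[\mathfrak{g},K]=[I,K]\subseteq K$, so $K$ is actually an ideal of $\mathfrak{g}$; it must be non-$Q$-graded, otherwise graded simplicity of $\mathfrak{g}$ would be contradicted. Apply the iterative procedure from the proof of Lemma~\ref{Existence} to $K$ to obtain a nonzero (still non-graded) subideal $K'\subseteq K$ satisfying the zero-intersection property. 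The key step is to identify $\mathrm{Inv}(K')$ with $\mathrm{Inv}(I)$: any $f\in\mathrm{Inv}(K')$ yields $0\neq K'\subseteq I\cap\tau_f(I)$, forcing $\tau_f(I)=I$ by the zero-intersection property of $I$, so $\mathrm{Inv}(K')\subseteq\mathrm{Inv}(I)$; the reverse inclusion follows from the minimal choice of $\mathrm{Inv}(I)$. I would then compare the two direct-sum decompositions
\begin{displaymath}
\mathfrak{g}=\bigoplus_{[f]\in\hat{Q}_0/\mathrm{Inv}(I)}\tau_f(I)=\bigoplus_{[f]\in\hat{Q}_0/\mathrm{Inv}(K')}\tau_f(K'),
\end{displaymath}
which share the same index set and satisfy $\tau_f(K')\subseteq\tau_f(I)$, and use uniqueness of expressions to conclude $\tau_f(K')=\tau_f(I)$ for every coset. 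In particular $K'=I$, contradicting $K'\subseteq K\subsetneq I$.

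The main obstacle I anticipate is the connection between the abstract minimality of $\mathrm{Inv}(I)$ and the concrete simplicity of $I$: one must ensure that a hypothetical ideal $K$ of $I$ can be refined to a subideal $K'$ still satisfying the zero-intersection property, without losing the containment $K'\subseteq I$ or allowing $K'$ to become $Q$-graded (a case which graded simplicity then rules out). The final coset-by-coset matching of decompositions also has to be carried out without dimension counting, since $\mathfrak{g}$ is allowed to be infinite-dimensional; instead one picks an arbitrary $x\in\tau_f(I)$, expands it in the $\tau_g(K')$-decomposition, and uses that the $\tau_g(I)$'s are mutually disjoint to conclude $x\in\tau_f(K')$.
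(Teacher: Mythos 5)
Your proposal is correct and follows the same overall strategy as the paper's proof: parts (a) and (b) coincide almost verbatim (the paper also takes $X_J := J\cap\sum_{J'\neq J}J'$, shows $[X_J,\mathfrak{g}]=0$ using (a), and invokes the vanishing of the center from Lemma~\ref{Property2}).

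Where you genuinely add something is in part (c). The paper takes a nonzero proper ideal $I_1\subsetneq I$ and asserts that $\mathrm{Inv}(I_1)=\mathrm{Inv}(I)$ ``from the minimality of $\mathrm{Inv}(I)$,'' then concludes that $\bigoplus_f\tau_f(I_1)$ is a homogeneous nonzero proper ideal, contradicting graded simplicity. For that appeal to minimality to be available, however, $I_1$ must lie in the set of ideals over which the minimum was taken, namely non-homogeneous, nonzero, proper ideals enjoying the \emph{zero-intersection} property of Lemma~\ref{Existence}; nothing guarantees that an arbitrary ideal of $I$ inherits this last condition (the $\tau_h(I_1)$ for $h\in\mathrm{Inv}(I)$ are distinct ideals living inside the same summand $I$, and could intersect nontrivially). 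You anticipate exactly this obstacle and patch it by first running the refinement procedure of Lemma~\ref{Existence} inside $K$ to obtain a subideal $K'\subseteq K$ that \emph{does} have the zero-intersection property, noting that the procedure only shrinks the ideal so the containment $K'\subseteq K\subsetneq I$ is kept and $K'$ stays non-graded by graded simplicity. Only then do you invoke minimality, obtaining $\mathrm{Inv}(K')=\mathrm{Inv}(I)$, and your final coset-by-coset comparison of the two direct-sum decompositions is a clean, dimension-free way to derive $K'=I$ (the paper instead points out that $\bigoplus_f\tau_f(I_1)$ is a homogeneous proper ideal; either ending works once $\mathrm{Inv}$ has been pinned down). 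In short: same skeleton, but your version makes the minimality argument in (c) rigorous where the paper's is slightly elliptical.
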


\begin{proof} 
Claim~\eqref{Property3.1}   follows from the fact that $[J,
J']=J\cap J'=0$.

To prove claim~\eqref{Property3.2}, we first note that
$\displaystyle\sum_{J\in \mathbf{I}} J=\mathfrak{g}$ as the left
hand side, being closed under the action of $\hat{Q}_0$, is a
non-zero homogeneous ideal of $\mathfrak{g}$ (and hence coincides with
$\mathfrak{g}$ as the latter is graded simple). Let us prove that
this sum is direct. For $J\in \mathbf{I}$, consider
\begin{displaymath}
X_J=J\cap\sum_{J'\in\mathbf{I}\setminus\{J\}}J',
\end{displaymath}
which is an ideal of  $\mathfrak{g}$. We have $[X_J,\mathfrak{g}]=0$. Hence $X_J=0$ by
Lemma~\ref{Property2}\eqref{Property2.2}. Claim~\eqref{Property3.2}
follows.

Finally,  suppose $I$ is not simple as a Lie algebra. If $[I,I]=0$,
then from  \eqref{Property3.2} we have
$[\mathfrak{g},\mathfrak{g}]=0$, which is impossible. So $[I,I]\neq 0$. We
take a non-zero proper ideal $I_1$ of $I$. From \eqref{Property3.2}  we have the direct sum
$$\bigoplus_{f\in \hat{Q}_0/\mathrm{Inv}(I)}\tau_f(I_1).$$
We see that $\mathrm{Inv}(I_1)$ is a subgroup of $\mathrm{Inv}(I)$. From the minimality of $\mathrm{Inv}(I)$ we deduce that $\mathrm{Inv}(I_1)=\mathrm{Inv}(I)$. Thus the above direct sum
is a homogeneous non-zero proper ideal of $\mathfrak{g}$
which   contradicts the graded simplicity of $\mathfrak{g}$. So  $I$ is a simple Lie algebra.
This completes the proof.
\end{proof}

Because of Lemma~\ref{Property3}, from now on we may assume that $I$ is a
 non-homogeneous non-zero proper ideal of $\mathfrak{g}$, that is a simple Lie algebra.

Let $\alpha\in Q$ and  $x\in I_{\bar{\alpha}}\setminus\{0\}$.  Since
\begin{displaymath}
 I_{\bar{\alpha}}=I\bigcap \bigoplus_{\beta\in
P_0}\mathfrak{g}_{\alpha+\beta},
\end{displaymath}
there are unique  vectors
$x_{\alpha+\beta}\in\mathfrak{g}_{\alpha+\beta}$, where $\beta\in P_0$,
such that
\begin{equation}\label{eqn9}
x=\sum_{\beta\in P_0}x_{\alpha+\beta} .
\end{equation}
Applying $\tau_f$, where $f\in \hat{Q}_0/\mathrm{Inv}(I)$, to both side of
\eqref{eqn9}, we obtain $|P_0|$ identities:
\begin{displaymath}
\sum_{\beta\in P_0}f(\alpha+\beta)x_{\alpha+\beta}=\tau_f(x), \,\,\,\,\,\,
f\in \hat{Q}_0/\mathrm{Inv}(I).
\end{displaymath}
From Lemma~\ref{Property3} \eqref{Property3.2}, we have that
$\{\tau_f(x): f\in \hat{Q}_0/\mathrm{Inv}(I)\}$ is a set of linearly
independent elements. Since $|P_0|=|\hat{Q}_0/\mathrm{Inv}(I)|$, we see that
$ \{x_{\alpha+\beta}\,:\,\beta\in P_0\}$ is a set of linearly independent elements and
\begin{displaymath}
\mathrm{span}\{\tau_f(x) \,:\, f\in \hat{Q}_0/\mathrm{Inv}(I)\}
=\mathrm{span} \{x_{\alpha+\beta}:\beta\in P_0\}.
\end{displaymath}
Thus $\{x_{\alpha+\beta}\,:\,\beta\in P_0\}$ can be uniquely determined
from the above $|Q_0|$ identities in terms of $\{\tau_f(x) \,:\, f\in
\hat{Q}_0/\mathrm{Inv}(I)\}$. Therefore the coefficient matrix
$\big(f(\alpha+\beta)\big)$, where $f\in \hat{Q}_0/\mathrm{Inv}(I)$ and
$\beta\in P_0$, is invertible. The above argument yields the following linear algebra
result:

\begin{lemma} \label{lemnn2}
Let $\alpha\in Q$ and
\begin{displaymath}
x=\sum_{\beta\in P_0}x_{\alpha+\beta}\in I_{\bar{\alpha}}\setminus\{0\},
\end{displaymath}
where $x_{\alpha+\beta}\in\mathfrak{g}_{\alpha+\beta}$, for $\beta\in
P_0$. Then $\{x_{\alpha+\beta}\,:\,\beta\in P_0\}$ is a set of linearly independent
elements and each $x_{\alpha+\beta}$ can be uniquely expressed in terms of
elements in $\{\tau_f(x) \,:\, f\in \hat{Q}_0/\mathrm{Inv}(I)\}$ and entries of the
invertible matrix $\big(f(\alpha+\beta)\big)_{f, \beta}$, where $f\in
\hat{Q}_0/\mathrm{Inv}(I)$ and $\beta\in P_0$. Consequently,
$\mathrm{size}(I)=|P_0|$.
\end{lemma}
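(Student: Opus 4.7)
My plan is to formalize the linear-algebra argument that immediately precedes the statement. The whole point is to pass from the decomposition $x = \sum_{\beta \in P_0} x_{\alpha+\beta}$ to a linear system that can be solved by inverting a character matrix. First, I would fix a set of coset representatives $f_1,\dots,f_k$ for $\hat{Q}_0/\mathrm{Inv}(I)$, where $k = |\hat{Q}_0/\mathrm{Inv}(I)| = |P_0|$ by Remark~\ref{remabgroup}. Applying each $\tau_{f_i}$ to the displayed decomposition of $x$ and using the definition of $\tau_f$ from \eqref{eq71} gives exactly the $|P_0|$ scalar-weighted identities
\begin{displaymath}
\tau_{f_i}(x) = \sum_{\beta \in P_0} f_i(\alpha+\beta)\,x_{\alpha+\beta}, \qquad i=1,\dots,k.
\end{displaymath}
This is a matrix equation $M \vec{v} = \vec{w}$, where $M = \bigl(f_i(\alpha+\beta)\bigr)_{i,\beta}$ has scalar entries, $\vec{v}$ is the column of $x_{\alpha+\beta}$'s, and $\vec{w}$ is the column of $\tau_{f_i}(x)$'s (note $M$ is well-defined on cosets mod $\mathrm{Inv}(I)$ since $\tau_f$ fixes $I_{\bar\alpha}$ componentwise for $f \in \mathrm{Inv}(I)$, so the choice of representative $f_i$ does not affect the corresponding row).

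Second, I would argue that the $\tau_{f_i}(x)$ are linearly independent. By Lemma~\ref{Property3}\eqref{Property3.2}, $\mathfrak{g} = \bigoplus_{J \in \mathbf{I}} J$, and the $k$ ideals $\tau_{f_i}(I)$ are precisely the distinct elements of $\mathbf{I}$. Each $\tau_{f_i}(x)$ lies in $\tau_{f_i}(I)$, and each is nonzero since $x \neq 0$ and $\tau_{f_i}$ is an automorphism. As nonzero vectors in distinct direct summands, they are linearly independent.

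Third, having $|P_0|$ linearly independent vectors $\tau_{f_i}(x)$ expressed as scalar combinations of the $|P_0|$ vectors $x_{\alpha+\beta}$ (living in $\mathfrak{g}$, viewed as a $\Bbbk$-vector space) forces $M$ to be invertible: if the columns of $M$ had a nontrivial $\Bbbk$-linear dependency, the $\tau_{f_i}(x)$ would inherit it. Invertibility of $M$ then allows us to solve $\vec{v} = M^{-1}\vec{w}$, yielding the claimed explicit formula for each $x_{\alpha+\beta}$ in terms of the $\tau_{f_i}(x)$ and the entries of $M^{-1}$; invertibility also shows that the $x_{\alpha+\beta}$ span a space of dimension $|P_0|$, hence are linearly independent.

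Finally, I would derive $\mathrm{size}(I) = |P_0|$. The upper bound $\mathrm{size}(I) \le |P_0|$ has already been observed from \eqref{eqn5} together with the existence of nonzero elements in $I_{\bar\alpha}$. For the lower bound, any nonzero $y \in I$ decomposes as $y = \sum_{\bar\alpha} y_{\bar\alpha}$ with $y_{\bar\alpha} \in I_{\bar\alpha}$; since the cosets $\alpha + P_0$ are disjoint, $\mathrm{supp}(y)$ is the disjoint union of the supports of the nonzero $y_{\bar\alpha}$, and the linear independence result just proved shows that each such nonzero summand contributes exactly $|P_0|$ elements to the support. Hence $|\mathrm{supp}(y)| \ge |P_0|$, giving equality. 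The only step requiring care is the bookkeeping ensuring $M$ is well-defined modulo $\mathrm{Inv}(I)$ and that the character matrix is indeed a $k \times k$ square matrix with $k = |P_0|$; everything else is a straightforward consequence of Lemma~\ref{Property3}.
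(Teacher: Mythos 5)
Your proposal is correct and follows the same linear-algebra route as the paper: apply the automorphisms $\tau_{f_i}$ to the decomposition of $x$ to produce the character matrix $M$, invoke the direct sum decomposition of Lemma~\ref{Property3}\eqref{Property3.2} to get linear independence of the $\tau_{f_i}(x)$, deduce invertibility of $M$ and hence independence of the $x_{\alpha+\beta}$, and conclude $\mathrm{size}(I)=|P_0|$ from the upper bound already noted after \eqref{eqn5} together with the coset-by-coset lower bound. Two small imprecisions are worth fixing: for $g\in\mathrm{Inv}(I)$ and $\beta\in P_0=\mathrm{Inv}(I)^{\perp}$ one has $g(\beta)=1$ and hence $\tau_g$ acts on $I_{\bar\alpha}$ by the scalar $g(\alpha)$, \emph{not} by the identity, so changing the representative $f_i$ rescales the corresponding row of $M$ (and the corresponding $\tau_{f_i}(x)$) by $g(\alpha)$ --- the system stays consistent and invertibility is unaffected, but $M$ is only defined up to row scaling, not literally well-defined as you claim. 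Second, from the identity $\tau_{f_i}(x)=\sum_{\beta}M_{i,\beta}\,x_{\alpha+\beta}$ it is a dependency among the \emph{rows} of $M$ that directly forces a dependency among the $\tau_{f_i}(x)$; the column version you invoke works only because $M$ is square, so rows and columns are simultaneously dependent or independent.
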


We will also need the following recognition result.

\begin{lemma}\label{Property4}
Let $\mathfrak{g}$ and $\mathfrak{g}'$ be two  $Q$-graded
simple Lie algebras with   non-homogeneous non-zero proper
ideals $I$ and $I'$, respectively, that are simple Lie algebras. If
$\mathrm{Inv}(I)=\mathrm{Inv}(I')$ and $I\simeq I'$ as
${Q}/P_0$-graded Lie algebras, then $\mathfrak{g}$ and
$\mathfrak{g}'$ are isomorphic as $Q$-graded Lie algebras.
\end{lemma}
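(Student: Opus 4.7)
The plan is to extend the given $Q/P_0$-graded isomorphism $\varphi\colon I\to I'$ to a $Q$-graded isomorphism $\Phi\colon \mathfrak{g}\to\mathfrak{g}'$ by transporting it across the decompositions from Lemma~\ref{Property3}. Fix coset representatives $f_1,\dots,f_m\in\hat{Q}_0$ for $\hat{Q}_0/\mathrm{Inv}(I)$; since $\mathrm{Inv}(I)=\mathrm{Inv}(I')$, the same set serves for $\hat{Q}_0/\mathrm{Inv}(I')$. Writing $\tau'_f$ for the analogue of $\tau_f$ on $\mathfrak{g}'$, Lemma~\ref{Property3}(b) gives
\[
\mathfrak{g}=\bigoplus_{i=1}^{m}\tau_{f_i}(I),\qquad \mathfrak{g}'=\bigoplus_{i=1}^{m}\tau'_{f_i}(I').
\]
Define $\Phi\bigl(\tau_{f_i}(x)\bigr):=\tau'_{f_i}(\varphi(x))$ for $x\in I$ and extend linearly.

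Checking that $\Phi$ is a Lie algebra isomorphism is routine: cross terms $[\tau_{f_i}(I),\tau_{f_j}(I)]$ with $i\ne j$ vanish on both sides by Lemma~\ref{Property3}(a), and within a single summand the identity $\Phi([\tau_{f_i}(x),\tau_{f_i}(y)])=[\Phi(\tau_{f_i}(x)),\Phi(\tau_{f_i}(y))]$ follows because $\tau_{f_i},\tau'_{f_i}$ are Lie algebra automorphisms and $\varphi$ is a Lie algebra isomorphism. Bijectivity is immediate from the matching direct sum decompositions together with bijectivity of $\varphi$.

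The crucial step is verifying that $\Phi$ respects the full $Q$-grading (a priori, $\Phi$ only manifestly preserves the coarser $Q/P_0$-grading). Fix $\alpha\in Q$; by Lemma~\ref{Property12}(b) every element of $\mathfrak{g}_\alpha$ arises as $\pi_\alpha(x)$ for some $x\in I_{\bar\alpha}$. Write $x=\sum_{\beta\in P_0}x_{\alpha+\beta}$ and $\varphi(x)=\sum_{\beta\in P_0}x'_{\alpha+\beta}$ with $x_{\alpha+\beta}\in\mathfrak{g}_{\alpha+\beta}$ and $x'_{\alpha+\beta}\in\mathfrak{g}'_{\alpha+\beta}$. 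Applying the automorphisms yields
\[
\tau_{f_i}(x)=\sum_{\beta\in P_0}f_i(\alpha+\beta)\,x_{\alpha+\beta},\qquad
\tau'_{f_i}(\varphi(x))=\sum_{\beta\in P_0}f_i(\alpha+\beta)\,x'_{\alpha+\beta}.
\]
By Lemma~\ref{lemnn2} the matrix $A=\bigl(f_i(\alpha+\beta)\bigr)_{i,\beta}$ is invertible, and crucially it is \emph{the same} matrix for both $\mathfrak{g}$ and $\mathfrak{g}'$ because $\mathrm{Inv}(I)=\mathrm{Inv}(I')$ forces the same choice of representatives $f_i$. Inverting $A$ expresses each $x_{\alpha+\beta}$ as a specific linear combination of $\{\tau_{f_i}(x)\}_i$ and $x'_{\alpha+\beta}$ as the identical linear combination of $\{\tau'_{f_i}(\varphi(x))\}_i$. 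Applying $\Phi$ therefore gives $\Phi(x_{\alpha+\beta})=x'_{\alpha+\beta}\in\mathfrak{g}'_{\alpha+\beta}$, so $\Phi(\mathfrak{g}_{\alpha+\beta})\subseteq\mathfrak{g}'_{\alpha+\beta}$ for every $\alpha\in Q$ and $\beta\in P_0$; the reverse inclusion follows by the symmetric argument applied to $\Phi^{-1}$.

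The main obstacle is precisely this grading-preservation step: the decomposition $\mathfrak{g}=\bigoplus_i\tau_{f_i}(I)$ is not a $Q$-graded decomposition, only a $Q/P_0$-graded one, so a direct definition of $\Phi$ on homogeneous pieces is not available. Lemma~\ref{lemnn2} was set up to circumvent exactly this difficulty by recovering the finer $Q$-graded components of an element of $I_{\bar\alpha}$ from its $\hat{Q}_0$-orbit via a universal (grading-independent) linear algebra formula, and the hypothesis $\mathrm{Inv}(I)=\mathrm{Inv}(I')$ is what guarantees that this formula is identical in $\mathfrak{g}$ and $\mathfrak{g}'$.
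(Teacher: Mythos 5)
Your proof is correct and follows essentially the same approach as the paper: you define $\Phi$ on the summand $\tau_{f_i}(I)$ as $\tau'_{f_i}\circ\varphi\circ\tau_{f_i}^{-1}$ (the paper writes this as $\varphi_f:=\tau_f\circ\varphi_0\circ\tau_f^{-1}$ and takes the direct sum), and the verification that $\Phi$ is a Lie algebra isomorphism is the same. For the final step --- that $\Phi$ preserves the full $Q$-grading rather than only the $Q/P_0$-grading --- the paper simply observes that $\Phi$ commutes with every $\tau_f$ by construction, hence preserves their common eigenspaces; your explicit matrix-inversion argument via Lemma~\ref{lemnn2} is a concrete, fully spelled-out rendering of that same observation.
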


\begin{proof}
From the discussion above we know that both $I$ and $I'$ are simple Lie algebras.
Let $\varphi_0: I\to I'$ be an isomorphism of  $Q/P_0$-graded Lie algebras.
We know that, for each $\alpha\in Q$, we have the decomposition
\begin{displaymath}
I_{\bar\alpha}\subset
\bigoplus_{\beta\in P_0} \mathfrak{g}_{\alpha+\beta}.
\end{displaymath}

For any $\bar{f}\in \hat{Q}_0/\mathrm{Inv}(I)$,  set
\begin{displaymath}
\varphi_f:=\tau_f\circ\varphi_0\circ\tau_f^{-1}:\tau_f (I)\to \tau_f (I').
\end{displaymath}
By taking the direct sum, we obtain an isomorphism of
$Q/P_0$-graded Lie algebras as follows:
\begin{displaymath}
\Phi:=\bigoplus_{\bar{f}\in \hat{Q}_0/\mathrm{Inv}(I)}\varphi_f:
\mathfrak{g}=\displaystyle\bigoplus
_{\bar{f}\in \hat{Q}_0/\mathrm{Inv}(I)}\tau_f ( I)
\longrightarrow\mathfrak{g}'=\displaystyle\bigoplus_{\bar{f}\in
\hat{Q}_0/\mathrm{Inv}(I)}\tau_f ( I' ).
\end{displaymath}
The isomorphism $\Phi$ commutes with all $\tau_f$ by construction.
Therefore, $\Phi$ is even an isomorphism of  $Q$-graded Lie algebras.
\end{proof}

\subsection{Classification}\label{s1.175}

The following theorem is the main result of this section.

\begin{theorem}\label{Finite}
Let $Q=Q_0\times Q_1$ be an additive abelian group where $Q_0$ is finite,  
$\Bbbk$ be an arbitrary field containing a primitive root of unity of order $|Q|$,  
and $\mathfrak{g}=\displaystyle\bigoplus_{\alpha\in
Q}\mathfrak{g}_\alpha$ be a $Q$-graded  simple Lie algebra over $\Bbbk$.
Assume that any  ideal of $\mathfrak{g}$ is $Q_1$-graded. Then there exists
a subgroup $P_0\subset Q_0$ and a simple Lie algebra
${\mathfrak{a}}$ with a $Q/P_0$-grading such that
$\mathfrak{g}\simeq \mathfrak{g}(Q,P_0,{\mathfrak{a}})$.
\end{theorem}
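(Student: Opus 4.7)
The plan is to reduce the theorem, via Lemma~\ref{Property4}, to producing inside a candidate loop algebra $\mathfrak{g}(Q,P_0,\mathfrak{a})$ a simple non-homogeneous proper ideal with the same invariant subgroup and $Q/P_0$-graded structure as a suitable ideal of $\mathfrak{g}$. First, if the underlying ungraded Lie algebra of $\mathfrak{g}$ is itself simple, I take $P_0=\{0\}$ and $\mathfrak{a}=\mathfrak{g}$, so that $\mathfrak{g}(Q,\{0\},\mathfrak{g})=\mathfrak{g}$ with its original grading and the theorem follows. From now on I assume $\mathfrak{g}$ is not simple; by the standing hypothesis on ideals, this is equivalent to the existence of a non-homogeneous non-trivial ideal.

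Next, I invoke Lemma~\ref{Existence} and then, using finiteness of $Q_0$, replace the resulting ideal by one whose $\mathrm{Inv}$ is minimal with respect to inclusion; this yields a non-homogeneous non-zero proper ideal $I$ such that distinct $\hat{Q}_0$-translates $\tau_f(I)$ have pairwise zero intersection and $\mathrm{Inv}(I)$ is minimal among such. By Lemma~\ref{Property3} I then obtain the decomposition $\mathfrak{g}=\bigoplus_{J\in\mathbf{I}}J$ with pairwise commuting summands, and $I$ is simple as an abstract Lie algebra. I set $\mathfrak{a}:=I$ equipped with its induced $Q/P_0$-grading $I_{\bar\alpha}=I\cap\mathfrak{g}_{\bar\alpha}$, where $P_0:=\mathrm{Inv}(I)^{\perp}\subset Q_0$.

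Now let $\mathfrak{g}':=\mathfrak{g}(Q,P_0,\mathfrak{a})$. Inside $\mathfrak{g}'$ I construct the ideal
\begin{displaymath}
I':=\bigoplus_{\bar\alpha\in Q/P_0}\mathfrak{a}_{\bar\alpha}\otimes \underline{t}^\alpha, \qquad \underline{t}^\alpha:=t^\alpha\sum_{\beta\in P_0}t^\beta,
\end{displaymath}
exactly as in the proof of Lemma~\ref{Property19}. Although that lemma is stated for finite $Q$, its construction requires only that $P_0$ be finite and that $\Bbbk$ contain a primitive root of unity of order $|P_0|$, both of which hold here; a routine verification then shows $I'\simeq \mathfrak{a}$ as $Q/P_0$-graded Lie algebras, so $I'$ is simple and non-homogeneous (since $P_0\neq 0$ as $\mathfrak{g}$ is not simple). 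A direct character-theoretic computation based on $\tau_f(\underline{t}^\alpha)\in \Bbbk\underline{t}^\alpha$ iff $f\in P_0^{\perp}$ gives $\mathrm{Inv}(I')=P_0^{\perp}$; the corresponding identity $\mathrm{Inv}(I)=P_0^{\perp}$ for $I$ itself follows from the definition of $P_0$ together with the double-annihilator duality for characters of finite abelian groups recorded in Remark~\ref{remabgroup}. Hence $\mathrm{Inv}(I')=\mathrm{Inv}(I)$ and $I'\cong I$ as $Q/P_0$-graded Lie algebras, and Lemma~\ref{Property4} delivers the desired $Q$-graded isomorphism $\mathfrak{g}\simeq \mathfrak{g}'=\mathfrak{g}(Q,P_0,\mathfrak{a})$.

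The main obstacle I expect is the legitimate extension of the construction and properties of $I'$, formulated in Lemma~\ref{Property19} under the assumption that $Q$ is finite, to the present setting where only $P_0\subset Q_0$ is assumed finite, and the matching $\mathrm{Inv}(I')=\mathrm{Inv}(I)$; both issues reduce to Pontryagin-type duality for the finite abelian group $Q_0$ as encapsulated in Remark~\ref{remabgroup}, so no genuinely new idea beyond what is already developed in Subsection~\ref{s1.15} should be required.
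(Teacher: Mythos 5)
Your proof is correct and takes essentially the same route as the paper: assume $\mathfrak{g}$ is not simple, extract a simple non-homogeneous ideal $I=\mathfrak{a}$ via Lemma~\ref{Existence} and Lemma~\ref{Property3}, endow it with the induced $Q/P_0$-grading where $P_0=\mathrm{Inv}(I)^{\perp}$, and invoke Lemma~\ref{Property4} against the loop algebra $\mathfrak{g}(Q,P_0,\mathfrak{a})$. You merely make explicit the verification that the paper leaves implicit, namely that the candidate ideal $I'$ inside $\mathfrak{g}(Q,P_0,\mathfrak{a})$ is simple, non-homogeneous, $Q/P_0$-graded isomorphic to $\mathfrak{a}$, and has $\mathrm{Inv}(I')=\mathrm{Inv}(I)=P_0^{\perp}$.
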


\begin{proof}
We may assume that $\mathfrak{g}$ is not simple.
Using Lemma~\ref{Property3}, fix a   nontrivial non-graded
ideal $\mathfrak{a}$ of $\mathfrak{g}$ that is a simple Lie algebra. Then the $Q/P_0$-grading for $\mathfrak{a}$ is given by
\begin{equation} \label{eqnn3}
\mathfrak{a} =\bigoplus_{\bar{\alpha}\in Q/P_0}  \mathfrak{a}_{\bar{\alpha}},
\end{equation}
where
\begin{displaymath}
\mathfrak{a}_{\bar{\alpha}}=\mathfrak{a}\bigcap
\bigoplus_{\beta\in \alpha+P_0} \mathfrak{g}_{\beta}.
\end{displaymath}
From the definition of $P_0$ it follows that ${\mathrm{Inv}}({\mathfrak{a}})=P_0^\perp$.
Now the claim follows from Lemma~\ref{Property4} applied to
the graded Lie algebras $\mathfrak{g}$ and $\mathfrak{g}(Q,P_0,{\mathfrak{a}})$,
where in both cases the distinguished $Q/P_0$-graded ideal is $\mathfrak{a}$.
\end{proof}

The following result is a direct consequence of Theorem~\ref{Finite}.

\begin{corollary}\label{Finite2}
Let $Q$ be a finite  additive abelian group and $\mathfrak{g}$ be a
$Q$-graded  simple Lie algebra over an arbitrary field $\Bbbk$  
containing a primitive  root of unity of order $|Q|$. Then there exists a
subgroup $P\subset Q$ and a simple Lie algebra ${\mathfrak{a}}$ with
a $Q/P$-grading such that $\mathfrak{g}\simeq
\mathfrak{g}(Q,P,{\mathfrak{a}})$.
\end{corollary}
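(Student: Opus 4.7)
The plan is to deduce Corollary~\ref{Finite2} from Theorem~\ref{Finite} by a one-step specialisation: take the trivial direct product decomposition $Q = Q_0 \times Q_1$ with $Q_0 := Q$ and $Q_1 := \{0\}$. Since $Q$ is finite, $Q_0$ is finite, and $|Q| = |Q_0|$, so the hypothesis that $\Bbbk$ contains a primitive root of unity of order $|Q|$ is exactly the root-of-unity hypothesis required by Theorem~\ref{Finite}.

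The only other hypothesis of Theorem~\ref{Finite} to verify is that every ideal of $\mathfrak{g}$ is $Q_1$-graded. Because $Q_1$ is the trivial group, a $Q_1$-grading on a vector space $V$ is just the decomposition $V = V_0$ placing the whole space in degree $0$; every subspace — and in particular every ideal of $\mathfrak{g}$ — automatically satisfies this. Hence the $Q_1$-gradedness hypothesis holds vacuously.

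Applying Theorem~\ref{Finite} with this data then yields a subgroup $P_0 \subset Q_0 = Q$ and a simple Lie algebra $\mathfrak{a}$ equipped with a $Q/P_0$-grading such that $\mathfrak{g} \simeq \mathfrak{g}(Q, P_0, \mathfrak{a})$. Setting $P := P_0$ gives precisely the conclusion of the corollary.

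There is no real obstacle here — the corollary is a mechanical specialisation of Theorem~\ref{Finite}. The only point worth flagging is the observation that the hypothesis about ideals being $Q_1$-graded becomes vacuous when $Q_1$ is trivial, which is what allows the finite case without further assumption on the shape of ideals to reduce immediately to the more technical theorem already proved.
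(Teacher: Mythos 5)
Your specialisation $Q_0 := Q$, $Q_1 := \{0\}$, with the observation that the $Q_1$-gradedness hypothesis on ideals becomes vacuous, is exactly how the paper deduces the corollary (the paper simply says it is a direct consequence of Theorem~\ref{Finite} without spelling this out). Your proof is correct and matches the intended argument.
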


The isomorphism problem is dealt with by the following statement.

\begin{theorem}\label{iso1}
Let $Q$, $Q'$ be finite abelian groups, $\mathfrak{g}(Q,P,{\mathfrak{a}})$
be  a $Q$-graded simple Lie algebra over $\Bbbk$ such that $\Bbbk$ 
contains primitive roots of unity of orders $|Q|$ and $|Q'|$, and $\mathfrak{g}(Q',P',{\mathfrak{a}'})$
be a $Q'$-graded Lie algebra over $\Bbbk$ with minimal gradings.
Then $\mathfrak{g}(Q,P,{\mathfrak{a}})$ is graded isomorphic to
$\mathfrak{g}(Q',P',{\mathfrak{a}'})$ if and only if there is a
group isomorphism $\tau:Q\to Q'$ such that $\tau(P)=P'$ and the
simple Lie algebras $\mathfrak{a}$ and  $\mathfrak{a}'$ are graded
isomorphic.
\end{theorem}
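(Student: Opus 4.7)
For the ``if'' direction, given a group isomorphism $\tau:Q\to Q'$ with $\tau(P)=P'$ and a graded Lie algebra isomorphism $\sigma:\mathfrak{a}\to\mathfrak{a}'$ with respect to the induced isomorphism $\bar\tau:Q/P\to Q'/P'$, I would define
\[
\Phi:\mathfrak{g}(Q,P,\mathfrak{a})\to\mathfrak{g}(Q',P',\mathfrak{a}'),\qquad x_{\bar\alpha}\otimes t^\alpha\mapsto \sigma(x_{\bar\alpha})\otimes t^{\tau(\alpha)},
\]
and verify directly that $\Phi$ is a well-defined $\tau$-graded Lie algebra isomorphism. All checks here are routine.

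The content lies in the ``only if'' direction. Suppose $\Phi:\mathfrak{g}(Q,P,\mathfrak{a})\to\mathfrak{g}(Q',P',\mathfrak{a}')$ is a graded isomorphism with underlying group isomorphism $\tau:Q\to Q'$. By Lemma~\ref{Property19}, $\mathfrak{g}(Q,P,\mathfrak{a})$ is a direct sum of $|P|$ simple ungraded ideals, each $Q/P$-graded isomorphic to $\mathfrak{a}$ and forming a single $\hat Q$-orbit; fix one such ideal $I$. Counting shows that $\mathrm{Inv}(I)=P^\perp$. The same structural description applies to $\mathfrak{g}(Q',P',\mathfrak{a}')$, so $I':=\Phi(I)$ is a simple ideal with $I'\simeq\mathfrak{a}'$ as $Q'/P'$-graded Lie algebra and $\mathrm{Inv}(I')=(P')^\perp$.

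To recover $\tau(P)=P'$, I would transport the $\hat Q$-action across $\Phi$. Define the dual isomorphism $\hat\tau:\hat{Q'}\to\hat Q$ by $\hat\tau(f')(\alpha)=f'(\tau(\alpha))$. A direct computation on homogeneous elements shows that $\tau_{f'}\circ\Phi=\Phi\circ\tau_{\hat\tau(f')}$ for every $f'\in\hat{Q'}$. Hence
\[
(P')^\perp=\mathrm{Inv}(I')=\hat\tau^{-1}(\mathrm{Inv}(I))=\hat\tau^{-1}(P^\perp)=(\tau(P))^\perp,
\]
where the last equality follows immediately from the definition of $\hat\tau$. Since $\Bbbk$ contains primitive roots of unity of orders $|Q|$ and $|Q'|$, the perp operation is a perfect involution on subgroups of $Q'$, so taking perps once more yields $\tau(P)=P'$.

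Finally, since $\tau(P)=P'$, the map $\bar\tau:Q/P\to Q'/P'$ is a well-defined group isomorphism. Using that $\Phi$ is $\tau$-graded, the restriction $\Phi|_I$ maps $I_{\bar\alpha}=I\cap\bigoplus_{\beta\in P}\mathfrak{g}_{\alpha+\beta}$ onto $I'_{\overline{\tau(\alpha)}}$, and is thus a $\bar\tau$-graded Lie algebra isomorphism $I\to I'$. Composing with the $Q/P$-graded identification $I\simeq\mathfrak{a}$ and the $Q'/P'$-graded identification $I'\simeq\mathfrak{a}'$ produces the desired graded isomorphism $\mathfrak{a}\simeq\mathfrak{a}'$. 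The main obstacle is the central bookkeeping step: keeping track of the dualities $Q\leftrightarrow\hat Q$ and verifying the conjugation identity $\tau_{f'}\circ\Phi=\Phi\circ\tau_{\hat\tau(f')}$ cleanly enough that the identification $\tau(P)=P'$ drops out; once this is secured, the remaining pieces follow by naturality.
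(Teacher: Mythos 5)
Your proof is correct and follows essentially the same route as the paper: both directions rest on the structure of simple ideals from Lemma~\ref{Property19}, on transporting the $\hat Q$-action across $\Phi$, and on the $P\leftrightarrow P^\perp$ duality. The only real difference is one of exposition: the paper compresses your conjugation identity $\tau_{f'}\circ\Phi=\Phi\circ\tau_{\hat\tau(f')}$ into the implicit equality $\sigma(\mathrm{Inv}(\mathfrak{a})^\perp)=\mathrm{Inv}(\Phi(\mathfrak{a}))^\perp$, which you helpfully make explicit.
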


\begin{proof} 
The direction ($\Leftarrow$) is clear,  we consider the direction ($\Rightarrow$).

Suppose $\Phi:\mathfrak{g}(Q,P,{\mathfrak{a}})\to \mathfrak{g}(Q',P',{\mathfrak{a}'})$ 
is a graded isomorphism of Lie algebras. Then there is a group isomorphism $\sigma:Q\to Q'$ 
such that $\Phi(\mathfrak{g}_\alpha)=\mathfrak{g}'_{\sigma(\alpha)}$, for all $\alpha\in Q$. 

From Lemma \ref{Property19}, we know that $\mathfrak{a}$ and  $\mathfrak{a}'$ 
are the simple subalgebras of  $\mathfrak{g}(Q,P,{\mathfrak{a}})$  and 
$\mathfrak{g}(Q',P',{\mathfrak{a}'})$ and all simple subalgebras are isomorphic. 
There is $f\in\hat Q'$ such that $\tau_f(\mathfrak{a}')=\Phi(\mathfrak{a})$. 
We have 
\begin{displaymath}
 \sigma(P)=\sigma(\text{Inv}(\mathfrak{a})^\perp)=
\text{Inv}(\Phi(\mathfrak{a}))^\perp=\text{Inv}(\tau_f(\mathfrak{a}'))^\perp=P'.
\end{displaymath}
Using restriction, one can easily see that the simple Lie algebras 
$\mathfrak{a}$ and  $\mathfrak{a}'$ 
are $Q/P$-graded isomorphic.
\end{proof}

In the above correspondence theorems, the algebras $\mathfrak{g}$ and 
${\mathfrak{a}}$  may not be central or with split centroid, which is quite 
different from the setup of Correspondence Theorem~7.1.1 in \cite{ABFP}.
Note that characterization on $Q$-graded  simple Lie algebras over a field
$\Bbbk$ for a general additive abelian group $Q$ in the
general case has to be dealt with by different methods. In what
follows, we approach this problem using a graded version of Schur's
lemma and using Correspondence Theorem~7.1.1 in \cite{ABFP}.

\section{Graded Schur's Lemma}\label{s25}

In this section we prove a graded version of Schur's lemma which we
will frequently use in the rest of the paper. This is a standard
statement, but we could not find a proper reference for the
generality we need.

Let $Q$ be an abelian
group, $\mathfrak{g}=\displaystyle\bigoplus_{\alpha\in
Q}\mathfrak{g}_\alpha$  a $Q$-graded  Lie algebra over a field
$\Bbbk$ and $W=\displaystyle\bigoplus_{{\bar\alpha}\in Q}W_{\alpha}$
a $Q$-graded  module over $\mathfrak{g}$. For $\alpha\in Q$, we call
a module homomorphism $\sigma :W\to W$ {\em homogeneous of degree $\alpha$}
provided that $\sigma(W_\beta)\subset W_{\beta+\alpha}$.

\begin{theorem}[Graded Schur's Lemma]\label{Schur}
Let $Q$ be an abelian group and
$\mathfrak{g}$ a $Q$-graded Lie algebra over an algebraically
closed field $\Bbbk$. Let $W$ be a $Q$-graded simple module over
$\mathfrak{g}$ with $\dim W< |\Bbbk|$. Then, for any fixed $\alpha\in
Q$, any two degree $\alpha$ automorphisms of $W$ differ by a scalar factor only.
\end{theorem}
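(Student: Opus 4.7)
The plan is to reduce to the degree-$0$ case and then run a Dixmier-style argument on the graded endomorphism ring. Let $\sigma_1,\sigma_2$ be two degree-$\alpha$ automorphisms of $W$. Since both are graded isomorphisms of degree $\alpha$, the composite $\sigma_1\sigma_2^{-1}$ is a degree-$0$ automorphism of $W$. Hence it suffices to prove that every nonzero degree-$0$ endomorphism of $W$ is a scalar multiple of the identity, i.e., that the degree-$0$ part $D:=\mathrm{End}_{\mathfrak{g}}(W)_0$ of the graded endomorphism ring equals $\Bbbk$.

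First I would check that $D$ is a division algebra over $\Bbbk$. If $\phi\in D$ is nonzero, then because $\phi$ is homogeneous of degree $0$, both $\ker\phi$ and $\mathrm{im}\,\phi$ are $Q$-graded $\mathfrak{g}$-submodules of $W$. Graded simplicity of $W$ then forces $\ker\phi=0$ and $\mathrm{im}\,\phi=W$, so $\phi$ is invertible in $\mathrm{End}_{\Bbbk}(W)$, and its inverse is automatically a degree-$0$ $\mathfrak{g}$-endomorphism, hence lies in $D$.

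Next I would bound $\dim_{\Bbbk}D$ by $\dim_{\Bbbk}W$. Pick any nonzero $v\in W$ and consider the $\Bbbk$-linear evaluation map $\mathrm{ev}_v:D\to W$, $\phi\mapsto\phi(v)$. If $\phi(v)=0$, then $\ker\phi$ is a nonzero graded submodule of $W$, so $\ker\phi=W$ and $\phi=0$; hence $\mathrm{ev}_v$ is injective and $\dim_{\Bbbk}D\le\dim_{\Bbbk}W<|\Bbbk|$.

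Finally, the Dixmier trick. Suppose $\phi\in D$ is not a scalar. Then $\phi-\lambda\cdot\mathrm{id}_W$ is a nonzero element of $D$ for every $\lambda\in\Bbbk$, hence invertible in $D$. I claim the family $\{(\phi-\lambda)^{-1}:\lambda\in\Bbbk\}$ is $\Bbbk$-linearly independent in $D$; this contradicts $\dim_{\Bbbk}D<|\Bbbk|$ and finishes the proof. For the claim, a nontrivial relation $\sum_{i=1}^{n}c_i(\phi-\lambda_i)^{-1}=0$ with distinct $\lambda_i$ and some $c_i\neq 0$, multiplied by $\prod_j(\phi-\lambda_j)$, yields $f(\phi)=0$ for the nonzero polynomial $f(x)=\sum_i c_i\prod_{j\neq i}(x-\lambda_j)\in\Bbbk[x]$. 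Since $\Bbbk$ is algebraically closed, $f$ splits as $c\prod_k(x-\mu_k)^{e_k}$, and since $D$ is a division ring one of the factors $\phi-\mu_k$ must vanish, making $\phi$ a scalar after all, a contradiction. The main obstacle is this last linear-independence step; everything else is essentially formal, and the argument is exactly where the hypothesis $\dim_{\Bbbk}W<|\Bbbk|$ together with algebraic closure of $\Bbbk$ is used in an essential way.
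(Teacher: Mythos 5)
Your proof is correct and follows essentially the same route as the paper: reduce to showing $\mathrm{End}_0(W)=\Bbbk$, observe it is a division algebra with $\dim\mathrm{End}_0(W)\le\dim W<|\Bbbk|$, and derive a contradiction from a non-scalar element via the family $(\phi-\lambda)^{-1}$, $\lambda\in\Bbbk$. The paper packages the last step by extracting a transcendental element and invoking the fraction field $\Bbbk(\sigma)$, whereas you prove linear independence of the $(\phi-\lambda)^{-1}$ directly (absorbing the algebraic case via factorization over the algebraically closed field), but this is the same Amitsur--Dixmier cardinality argument, just written out more explicitly.
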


\begin{proof}
Let $\mathrm{End}_0(W)$ be the algebra of all homogeneous degree
zero endomorphisms of $W$. It is enough to show that
$\mathrm{End}_0(W)=\Bbbk$.  The usual arguments give that
$\mathrm{End}_0(W)$ is a division algebra over $\Bbbk$. Then $W$,
viewed as an $\mathrm{End}_0(W)$-module, is a sum of copies of
$\mathrm{End}_0(W)$. In particular, $$\dim \mathrm{End}_0(W)\le \dim
W<|\Bbbk|.$$ Since $\Bbbk$ is algebraically closed, if
$\mathrm{End}_0(W)$ were strictly larger than $\Bbbk$, then
$\mathrm{End}_0(W)$ would contain some $\sigma$ which is
transcendental over $\Bbbk$. Then the fraction field $\Bbbk(\sigma)$
would be contained in $\mathrm{End}_0(W)$. However, we have  the
elements $\frac{1}{\sigma-a}\in \Bbbk(\sigma)$, where $a\in\Bbbk$,
which  are linearly independent. Therefore $$\dim
\mathrm{End}_0(W)\ge \dim\Bbbk(\sigma)\ge|\Bbbk|,$$ contradicting
the fact that $\dim \mathrm{End}_0(W)<|\Bbbk|$. Thus we conclude that
$\mathrm{End}_0(W)=\Bbbk$.
\end{proof}

\section{Graded  simple Lie algebras:  the case of infinite $Q$}\label{s1.3}

\begin{theorem}\label{Infinite1}
Let $Q$ be an arbitrary additive abelian group and
$\mathfrak{g}$ be a $Q$-graded  simple Lie algebra over a algebraically closed field
$\Bbbk$ such that  $\dim\mathfrak{g}<|\Bbbk|$. Then there exists a
subgroup $P\subset Q$ and a simple Lie algebra ${\mathfrak{a}}$ with
a $Q/P$-grading such that $\mathfrak{g}\simeq
\mathfrak{g}(Q,P,{\mathfrak{a}})$.
\end{theorem}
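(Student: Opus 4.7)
The plan is to verify that $\mathfrak{g}$ satisfies the hypotheses of Correspondence Theorem~7.1.1 in \cite{ABFP} and then invoke that theorem. Since graded simplicity is assumed, what remains is to show that $\mathfrak{g}$ is \emph{graded-central}, meaning the degree zero part $C(\mathfrak{g})_{0}$ of the graded centroid $C(\mathfrak{g})=\bigoplus_{\alpha\in Q}C(\mathfrak{g})_{\alpha}$ equals $\Bbbk$, and that $C(\mathfrak{g})$ is \emph{split}, i.e., $C(\mathfrak{g})\cong\Bbbk[P]$ as a $Q$-graded $\Bbbk$-algebra for some subgroup $P\subset Q$.

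I view $\mathfrak{g}$ as a $Q$-graded module over itself via the adjoint action. Graded submodules then coincide with graded ideals, so $\mathfrak{g}$ is a graded simple $\mathfrak{g}$-module, and antisymmetry of the bracket shows that $\mathfrak{g}$-linear endomorphisms are precisely centroid elements. For any nonzero $\chi\in C(\mathfrak{g})_{\alpha}$, both $\ker\chi$ and $\mathrm{im}\,\chi$ are graded ideals, hence by graded simplicity $\chi$ is an automorphism of $\mathfrak{g}$ of degree $\alpha$. Since $\dim\mathfrak{g}<|\Bbbk|$, Theorem~\ref{Schur} yields $\dim C(\mathfrak{g})_{\alpha}\le 1$ for every $\alpha\in Q$; in particular $C(\mathfrak{g})_{0}=\Bbbk$, which gives the graded-central property.

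Set $P:=\{\alpha\in Q\,:\,C(\mathfrak{g})_{\alpha}\ne 0\}$. Composition and inversion of homogeneous automorphisms show that $P$ is a subgroup of $Q$ and that every nonzero homogeneous element of $C(\mathfrak{g})$ is invertible. The identity
\begin{displaymath}
\chi_{1}\chi_{2}([x,y])=[\chi_{1}x,\chi_{2}y]=\chi_{2}\chi_{1}([x,y]),
\end{displaymath}
combined with $\mathfrak{g}=[\mathfrak{g},\mathfrak{g}]$, which holds because $\mathfrak{g}$ is noncommutative and graded simple, shows that $C(\mathfrak{g})$ is commutative. Hence $C(\mathfrak{g})$ is a commutative graded field supported on $P$ with one-dimensional homogeneous components. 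Picking a nonzero $c_{\alpha}\in C(\mathfrak{g})_{\alpha}$ for each $\alpha\in P$, with $c_{0}=1$, yields a symmetric $2$-cocycle $\lambda\colon P\times P\to\Bbbk^{*}$ defined by $c_{\alpha}c_{\beta}=\lambda(\alpha,\beta)c_{\alpha+\beta}$. Since $\Bbbk$ is algebraically closed, $\Bbbk^{*}$ is divisible, hence an injective $\mathbb{Z}$-module, so $\mathrm{Ext}^{1}(P,\Bbbk^{*})=0$; this forces $\lambda$ to be a coboundary, and rescaling the $c_{\alpha}$ normalizes $\lambda\equiv 1$. Thus $C(\mathfrak{g})\cong\Bbbk[P]$ as $Q$-graded $\Bbbk$-algebras, establishing the split-centroid property.

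With both properties in hand, Correspondence Theorem~7.1.1 of \cite{ABFP} produces a central simple Lie algebra $\mathfrak{a}$ over $\Bbbk$ equipped with a $Q/P$-grading such that $\mathfrak{g}\cong\mathfrak{g}(Q,P,\mathfrak{a})$ as $Q$-graded Lie algebras, matching the claim since central simple algebras are in particular simple. The main obstacle I anticipate is the splitting step: identifying $C(\mathfrak{g})$ with $\Bbbk[P]$ rather than a twisted group algebra is precisely where algebraic closedness of $\Bbbk$ enters, and the entire reduction to \cite{ABFP} would collapse over a general field; divisibility of $\Bbbk^{*}$ makes this cocycle computation immediate, but it is the single arithmetic input doing all the work.
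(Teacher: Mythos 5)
Your proof is correct and follows essentially the same route as the paper: both reduce the statement to Correspondence Theorem~7.1.1 of \cite{ABFP} by verifying that $\mathfrak{g}$ is graded-central-simple (via the Graded Schur Lemma applied to the adjoint module) and that its centroid is split. The only difference is in how the split-centroid property is obtained: the paper simply cites Lemma~4.3.8 of \cite{ABFP}, whereas you prove it directly, observing that $C(\mathfrak{g})$ is a commutative $P$-graded field with one-dimensional homogeneous components and then using divisibility of $\Bbbk^{*}$ (equivalently, $\mathrm{Ext}^{1}_{\mathbb{Z}}(P,\Bbbk^{*})=0$) to trivialize the defining symmetric $2$-cocycle, giving $C(\mathfrak{g})\cong\Bbbk[P]$. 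This is a clean, self-contained replacement for the cited lemma and makes explicit where algebraic closedness is used, but it does not change the overall strategy.
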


\begin{proof} 
From Theorem~\ref{Schur}, we know that $\mathfrak{g}$ is graded-central-simple 
(see Definition~4.3.1 in \cite{ABFP}). From Lemma~4.3.8 in \cite{ABFP}, 
we know that $\mathfrak{g}$ has a split centroid. The statement now follows from 
Correspondence Theorem 7.1.1 in \cite{ABFP}.
\end{proof}

Combining Lemma~\ref{Property19} with Theorems~\ref{Finite} and
\ref{Infinite1}, we obtain:

\begin{corollary}\label{cor237}
Let $Q$ be a finite additive abelian group and $\mathfrak{g}$ a $Q$-graded  simple 
Lie algebra over a field $\Bbbk$. Suppose that one of the following holds:
\begin{enumerate}[$($i$)$]
\item\label{cor237.1} $\Bbbk$ contains a primitive root of unity of order $|Q|$;
\item\label{cor237.2} $\Bbbk$ is algebraically closed and $\dim\mathfrak{g}<|\Bbbk|$.
\end{enumerate}  
Then there exists a subgroup $P\subset Q$ and a simple Lie algebra ${\mathfrak{a}}$ with
a $Q/P$-grading such that $\mathfrak{g}\simeq \mathfrak{g}(Q,P,{\mathfrak{a}})$.
\end{corollary}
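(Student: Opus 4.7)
The plan is to prove the corollary by a straightforward case analysis that matches the two hypotheses to the two classification results already established. The corollary is essentially a unification statement: case~(i) covers the situation where enough roots of unity are present in $\Bbbk$ (without any topological/cardinality hypothesis on $\Bbbk$), while case~(ii) covers the situation where $\Bbbk$ is algebraically closed and the algebra is small relative to $|\Bbbk|$ (without any explicit root-of-unity hypothesis, which is automatic from algebraic closure in characteristic zero but not in general).

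First, under hypothesis~(i), the hypotheses of Corollary~\ref{Finite2} are satisfied verbatim: $Q$ is finite and $\Bbbk$ contains a primitive root of unity of order $|Q|$. Hence Corollary~\ref{Finite2} directly produces a subgroup $P\subset Q$ and a simple Lie algebra $\mathfrak{a}$ with a $Q/P$-grading such that $\mathfrak{g}\simeq \mathfrak{g}(Q,P,\mathfrak{a})$, and nothing further is needed.

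Second, under hypothesis~(ii), we have $\Bbbk$ algebraically closed and $\dim\mathfrak{g}<|\Bbbk|$. Since Theorem~\ref{Infinite1} is formulated for an \emph{arbitrary} additive abelian group $Q$, the finite $Q$ of the present statement is simply a special case, and applying Theorem~\ref{Infinite1} yields the decomposition $\mathfrak{g}\simeq \mathfrak{g}(Q,P,\mathfrak{a})$. The reference to Lemma~\ref{Property19} in the surrounding discussion plays no role in the existence of the pair $(P,\mathfrak{a})$; it merely records the additional structural consequence, in case~(i), that $\mathfrak{g}(Q,P,\mathfrak{a})$ splits as a direct sum of $|P|$ ideals each isomorphic to $\mathfrak{a}$ as $Q/P$-graded Lie algebras.

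There is essentially no obstacle to overcome: the corollary is a bookkeeping combination of Corollary~\ref{Finite2} and Theorem~\ref{Infinite1}. The only point requiring attention is that hypotheses~(i) and~(ii) are genuinely independent — neither implies the other, since (i) does not assume algebraic closure while (ii) does not assume the existence of a primitive $|Q|$-th root of unity (for instance, in positive characteristic dividing $|Q|$ an algebraically closed field still lacks such a root). Hence both invocations are needed, and the two cases between them exhaust the stated hypotheses.
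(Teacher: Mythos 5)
Your proposal is correct and matches the paper's own (one-line) proof, which simply cites Theorem~\ref{Finite} (equivalently Corollary~\ref{Finite2}) for case~(i) and Theorem~\ref{Infinite1} for case~(ii). Your observation that Lemma~\ref{Property19} is not needed for the existence of $(P,\mathfrak{a})$ but only records additional structure is a fair reading of the paper's slightly loose phrasing ``Combining Lemma~\ref{Property19} with Theorems~\ref{Finite} and~\ref{Infinite1}.''
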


The following result is a generalization of
\cite[Main~Theorem~(a)]{M} which follows directly from Theorem~\ref{Infinite1}.

\begin{corollary}\label{cor238}
Let $Q=Q_0\times Q_1$ be an additive abelian group where $Q_0$ is
torsion subgroup of $Q$. Let $\displaystyle \mathfrak{g} $ be a
finite dimensional $Q$-graded  simple Lie algebra. Then
$\mathfrak{g}$ is a $Q_0$-graded simple Lie algebra.
\end{corollary}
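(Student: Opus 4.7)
The plan is to derive the corollary from the loop-algebra classification in Theorem~\ref{Infinite1}: finite-dimensionality will force the twisting subgroup $P$ into the torsion part $Q_0$, after which the $Q_0$-coarsening of the $Q$-grading on $\mathfrak{g}$ is itself a loop algebra over $Q_0$, whose $Q_0$-graded simplicity is supplied by Lemma~\ref{Property1}.

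First I would invoke Theorem~\ref{Infinite1} to produce a subgroup $P\subset Q$ and a simple Lie algebra $\mathfrak{a}$ with a $Q/P$-grading such that $\mathfrak{g}\simeq\mathfrak{g}(Q,P,\mathfrak{a})$ as $Q$-graded Lie algebras. Since $\dim\mathfrak{g}=|P|\cdot\dim\mathfrak{a}$ is finite, the subgroup $P$ is finite, hence consists of torsion elements, so $P\subset Q_0$. This yields the decomposition $Q/P=(Q_0/P)\times Q_1$, so I may coarsen the $Q/P$-grading on $\mathfrak{a}$ along the projection $Q/P\to Q_0/P$ to a $Q_0/P$-grading on the same underlying simple Lie algebra, namely
\begin{displaymath}
\mathfrak{a}'_{\bar{\alpha}_0}:=\bigoplus_{\alpha_1\in Q_1}\mathfrak{a}_{(\bar{\alpha}_0,\alpha_1)},\qquad \bar{\alpha}_0\in Q_0/P,
\end{displaymath}
which I denote by $\mathfrak{a}'$.

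The main step is to exhibit a $Q_0$-graded Lie algebra isomorphism $\Phi:\mathfrak{g}(Q_0,P,\mathfrak{a}')\to\mathfrak{g}(Q,P,\mathfrak{a})$, where the target carries the natural $Q_0$-grading $\mathfrak{g}_{\alpha_0}:=\bigoplus_{\alpha_1\in Q_1}\mathfrak{g}_{(\alpha_0,\alpha_1)}$ inherited from its $Q$-grading via the projection $Q\to Q_0$. Concretely, $\Phi$ sends $a\otimes t^{\alpha_0}$, with $a=\sum_{\alpha_1}a_{\alpha_1}\in\mathfrak{a}'_{\bar{\alpha}_0}$ decomposed along the finer $Q/P$-grading, to $\sum_{\alpha_1}a_{\alpha_1}\otimes t^{(\alpha_0,\alpha_1)}$. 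Bijectivity and compatibility with the $Q_0$-gradings are immediate from the formula, and the bracket relation reduces to the graded identity $[\mathfrak{a}_{(\bar{\alpha}_0,\alpha_1)},\mathfrak{a}_{(\bar{\beta}_0,\beta_1)}]\subset\mathfrak{a}_{(\overline{\alpha_0+\beta_0},\alpha_1+\beta_1)}$. Applying Lemma~\ref{Property1} to the triple $(Q_0,P,\mathfrak{a}')$ then shows that $\mathfrak{g}(Q_0,P,\mathfrak{a}')$, and hence $\mathfrak{g}$, is $Q_0$-graded simple. The only subtlety is to confirm that the $Q_0$-grading used on the target of $\Phi$ truly coincides with the natural one on $\mathfrak{g}$; but this is built into the definition of $\Phi$ component by component, and no further calculation is required.
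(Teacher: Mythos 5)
Your proposal is correct and is essentially the argument the paper has in mind: the paper gives no written proof for Corollary~\ref{cor238}, stating only that it "follows directly from Theorem~\ref{Infinite1}," and your argument spells out exactly that deduction. You apply Theorem~\ref{Infinite1} to write $\mathfrak{g}\simeq\mathfrak{g}(Q,P,\mathfrak{a})$, observe that finiteness of $\dim\mathfrak{g}$ forces $P$ to be a finite, hence torsion, subgroup (so $P\subset Q_0$), coarsen the $Q/P$-grading on $\mathfrak{a}$ along $Q/P\cong(Q_0/P)\times Q_1\to Q_0/P$, check that the $Q_0$-coarsening of $\mathfrak{g}(Q,P,\mathfrak{a})$ is graded-isomorphic to $\mathfrak{g}(Q_0,P,\mathfrak{a}')$, and finish with Lemma~\ref{Property1}. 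All of those steps check out; in particular the bracket computation verifying $\Phi$ is a Lie algebra map is correct, and the observation that $P$ must be finite is valid (a nonzero homogeneous component of $\mathfrak{a}$ repeats over all of $P$ inside $\mathfrak{g}(Q,P,\mathfrak{a})$). The one implicit point worth flagging is that, as stated, the corollary carries the standing hypotheses of Theorem~\ref{Infinite1} (namely $\Bbbk$ algebraically closed; $\dim\mathfrak{g}<|\Bbbk|$ is then automatic for finite-dimensional $\mathfrak{g}$), which your proof uses without saying so — but the paper itself leaves this implicit as well.
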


The necessary and sufficient conditions for two graded-simple algebras 
$\mathfrak{g}(Q,P,{\mathfrak{a}})$ where ${\mathfrak{a}}$ is central-simple
were given in Correspondence Theorem 7.1.1 in \cite{ABFP}.

From Corollary \ref{cor237},  we actually obtain a full
classification of all finite-dimensional $Q$-graded simple Lie
algebras  over any algebraically closed field of characteristic $0$
due to the recent classification of all gradings on finite
dimensional simple Lie algebras, see \cite{EK2,E,Y}. For a similar
classification over an algebraically closed field of characteristic
$p>0$ it remains only to determine all gradings on finite dimensional
simple Lie algebras. Some partial results in this direction can be found
in \cite{EK2}, see also references therein.

\section{Graded Weyl Theorem}\label{s2}

One consequence of our  Theorem~\ref{Infinite1} is that
any finite dimensional $Q$-graded simple Lie algebra over an algebraically
closed field $\Bbbk$ of characteristic $0$ is semi-simple after
forgetting the grading (note that this property is not true in positive
characteristic). This allows us to prove a graded version of the  Weyl Theorem.

\begin{theorem}[Graded Weyl Theorem]\label{Weyl}
Let $Q$ be an abelian group and
$\mathfrak{g}$ a finite dimensional $Q$-graded semi-simple Lie
algebra over an algebraically closed field $\Bbbk$ of characteristic
$0$. Then any finite dimensional $Q$-graded module $V$ over
$\mathfrak{g}$ is completely reducible as a graded module, that is, $V$ is a direct
sum of $Q$-graded simple submodules of $V$.
\end{theorem}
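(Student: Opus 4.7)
The plan is to deduce the result from the classical (ungraded) Weyl theorem by taking the homogeneous components of an ungraded splitting.

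The first step is to observe that $\mathfrak{g}$ is already semisimple as an ungraded Lie algebra. Writing $\mathfrak{g} = \bigoplus_i \mathfrak{g}^{(i)}$ as a direct sum of $Q$-graded simple ideals, each $\mathfrak{g}^{(i)}$ is finite-dimensional and graded simple, so Theorem~\ref{Infinite1} presents it as $\mathfrak{g}(Q, P_i, \mathfrak{a}_i)$ with $P_i$ a finite subgroup (finite-dimensionality plus minimality of the grading forces this) and $\mathfrak{a}_i$ a finite-dimensional simple Lie algebra. Since $\Bbbk$ is algebraically closed of characteristic zero, it contains a primitive $|P_i|$-th root of unity, and Lemma~\ref{Property19} shows that $\mathfrak{g}^{(i)}$ is, as an ungraded algebra, a direct sum of $|P_i|$ copies of $\mathfrak{a}_i$, hence semisimple. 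Consequently $\mathfrak{g}$ itself is ungraded semisimple. (This is the observation the authors flag in the paragraph preceding the theorem.)

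By induction on $\dim V$, it suffices to show that every proper graded submodule $W \subsetneq V$ admits a graded $\mathfrak{g}$-stable complement. Applying the classical Weyl theorem to the ungraded $\mathfrak{g}$-module $V$ yields a $\mathfrak{g}$-linear retraction $\pi : V \to W$, that is, a $\mathfrak{g}$-homomorphism with $\pi|_W = \mathrm{id}_W$. This $\pi$ need not be graded, but since $V$ and $W$ are finite-dimensional and graded it decomposes as a finite sum $\pi = \sum_{\alpha \in Q} \pi_\alpha$, where $\pi_\alpha$ is the degree-$\alpha$ homogeneous component, sending $V_\beta$ into $W_{\alpha+\beta}$ for every $\beta$ (here I use that $W$ is a graded subspace, so the $\alpha+\beta$ component of $\pi(V_\beta)\subset W$ lies in $W_{\alpha+\beta}$). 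Because $\mathrm{id}_W$ is homogeneous of degree zero, the relation $\pi|_W = \mathrm{id}_W$ decomposes in homogeneous parts as $\pi_0|_W = \mathrm{id}_W$ and $\pi_\alpha|_W = 0$ for $\alpha \ne 0$; hence $\ker(\pi_0)$ will be the desired graded complement, provided $\pi_0$ is itself $\mathfrak{g}$-linear.

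The main (and essentially only) point is to verify that each homogeneous component $\pi_\alpha$ remains $\mathfrak{g}$-linear. This uses that $\mathfrak{g}$ acts by homogeneous operators: for $x \in \mathfrak{g}_\gamma$ and $v \in V_\beta$, both $\pi(xv)$ and $x\pi(v)$ lie in $\bigoplus_\alpha W_{\alpha+\beta+\gamma}$, and equating their components in degree $\alpha+\beta+\gamma$ forces $\pi_\alpha(xv) = x\pi_\alpha(v)$ for every $\alpha$. Apart from this (essentially formal) step, the argument is routine: iterating the graded splitting $V = W \oplus \ker(\pi_0)$ exhibits $V$ as a direct sum of $Q$-graded simple submodules.
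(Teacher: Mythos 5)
Your proof is correct and follows the same two-step strategy as the paper's. The first step is identical in substance: both establish that $\mathfrak{g}$ is semisimple as an ungraded Lie algebra by invoking the loop-algebra classification (you cite Theorem~\ref{Infinite1}; the paper uses Corollary~\ref{cor237}) together with Lemma~\ref{Property19}. For the second step, both proofs invoke the classical Weyl theorem to obtain an ungraded complement; the only real difference is in how that complement is upgraded to a graded one. The paper cites \cite[Lemma~1.1]{EK} (see also \cite[Theorem~2.3$'$]{CM}) for this step as a black box, whereas you prove it directly: you take the degree-zero homogeneous component $\pi_0$ of the $\mathfrak{g}$-linear retraction $\pi : V \to W$, check that $\pi_0$ remains $\mathfrak{g}$-linear because $\mathfrak{g}$ acts by homogeneous operators, note that $\pi_0|_W = \mathrm{id}_W$ because $\mathrm{id}_W$ is homogeneous of degree zero, and conclude that $\ker(\pi_0)$ is the required $Q$-graded complement. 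This homogeneous-component argument is exactly the standard content behind the cited lemma, so your version is a self-contained and slightly more elementary presentation of the same idea rather than a genuinely different route. The calculations — that $\pi_\alpha$ maps $V_\beta$ into $W_{\alpha+\beta}$ using that $W$ is a graded subspace, and that the finiteness of $\dim V$ guarantees a finite homogeneous decomposition of $\pi$ — are all correct.
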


\begin{proof}
Since $\mathfrak{g}$
is finite-dimensional,  the minimal grading of $\mathfrak{g}$ is by
a finitely generated subgroup of $Q$. Since $\mathfrak{g}$ is finite dimensional, 
from Corollary~\ref{cor237}, we know that $\mathfrak{g}$ is $Q_0$-graded simple, 
for a finite subgroup $Q_0$ of $Q$. From Lemma \ref{Property19}, it follows that, 
as an ungraded Lie algebra, $\mathfrak{g}$ is a finite-dimensional semisimple Lie algebra.
(This can also be proved by looking at the radical of $\mathfrak{g}$ without using Corollary~\ref{cor237}).

We need to show that any $Q$-graded submodule $X$ of a $Q$-graded
finite dimensional $\mathfrak{g}$-module $W$ has a $Q$-graded
complement. By Weyl Theorem, we have an ungraded $\mathfrak{g}$-submodule $Y_1$
such that $W=X\oplus Y_1$. By \cite[Lemma~1.1]{EK} (see also
\cite[Theorem~2.3$'$]{CM}), there is a $Q$-graded submodule $Y$ of $W$ 
such that $W=X\oplus Y.$ The theorem follows.
\end{proof}

\section{Graded simple modules over graded Lie algebras}\label{5}

We will study graded simple modules over graded Lie algebras (not necessarily graded simple) in this section. 
Since we will use the graded analogue of Schur's Lemma, we will assume that
the base field $\Bbbk$ is algebraically closed later in Subsection~\ref{s4.4}.

\subsection{Construction}\label{s4.3}

Let $Q$ be an abelian group and  $P$ a subgroup of $Q$.
Let, further, $\mathfrak{g}$ be a $Q$-graded  Lie algebra over  an arbitrary field $\Bbbk$.
Note that in this section we even do not need to assume that the 
$Q$-grading of $\mathfrak{g}$ is minimal. Consider
$\mathfrak{g}=\displaystyle\bigoplus_{\bar\alpha\in
Q/P}\mathfrak{g}_{\bar\alpha}$ as a $Q/P$-graded Lie algebra with
$\mathfrak{g}_{\bar\alpha}= \displaystyle\bigoplus_{\beta\in
P}\mathfrak{g}_{\alpha+\beta}$. Let
$V=\displaystyle\bigoplus_{{\bar\alpha}\in Q/P}V_{\bar\alpha}$ be a
simple $\mathfrak{g}$-module with a fixed $Q/P$-grading.

Then we can form the  $\mathfrak{g}$-module $V\otimes \Bbbk Q$ as
follows: for $x\in \mathfrak{g}_{\alpha}$, $v\in V$ and
$\beta\in Q$, define
\begin{displaymath}
x\cdot \big(v\otimes t^{\beta})=(xv)\otimes t^{\alpha+\beta}.
\end{displaymath}
Define the  $Q$-graded $\mathfrak{g}$-module
\begin{displaymath}
M(Q,P,V):= \bigoplus_{\alpha\in Q}M(Q,P,V)_{\alpha},\quad \text{
where }\quad M(Q,P,V)_{\alpha}:=V_{\bar\alpha}\otimes t^{\alpha}.
\end{displaymath}
For example, $M(Q,Q,V)=V\otimes \Bbbk Q$ (with the
obvious $Q$-grading) while $M(Q,\{0\},V)=V$ (with the original
$Q$-grading). From the definition it follows that $\dim
M(Q,P,V)=\dim(V)|P|$, if $V$ is finite-dimensional and $P$ is finite.
These graded modules are called loop modules in \cite{EK4}.

Unlike the algebra case,  the $Q$-graded   $\mathfrak{g}$-module $M(Q,P,V)$ is generally not  graded simple.  Even it is  hard to see whether it is graded simple. We will first discuss this problem.

We say that a simple $\mathfrak{g}$-module $V$ with a $Q/P$-grading
is {\it grading extendable} if there is a subspace decomposition
\begin{displaymath}
V=\sum_{\alpha\in Q}X_{\alpha}\,\,\,\text{ with }\,\,\,
V_{\bar{\alpha}}=\sum_{{\beta}\in P}X_{{\alpha}+\beta}
\end{displaymath}
for any ${\alpha}\in Q$ (here both sums are not necessarily direct)
such that  $\mathfrak{g}_\beta X_{\alpha}\subset X_{\alpha+\beta}$
for all $\beta\in Q$ and at least one $X_{\alpha}\ne
V_{\bar{\alpha}}$. Note that the set $\{X_\alpha:\alpha\in Q\}$ 
is called a $Q$-covering of $V$ in \cite{BL}.

Now we can obtain some necessary and sufficient conditions for the
$Q$-graded module  $M(Q,P,V)$ to be $Q$-graded simple.

\begin{lemma}\label{Property21}
The $Q$-graded module  $M(Q,P,V)$ is $Q$-graded simple if and only
if the simple module $V$ is not grading extendable.
\end{lemma}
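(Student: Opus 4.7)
The plan is to prove both directions by transferring between two pieces of data that are essentially the same object viewed in two ways: a non-trivial graded submodule $N$ of $M(Q,P,V)$ on one side, and a proper $Q$-covering $\{X_\alpha\}$ of $V$ refining its $Q/P$-grading on the other. The bridge is the observation that each graded component $M(Q,P,V)_\alpha = V_{\bar\alpha}\otimes t^\alpha$ is canonically identified with $V_{\bar\alpha}$, so any $Q$-graded subspace of $M(Q,P,V)$ pulls back to a family of subspaces $X_\alpha\subset V_{\bar\alpha}$ indexed by $\alpha\in Q$ (not just by $Q/P$), and vice versa.

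For the direction ($\Leftarrow$)$^{\text{contrapositive}}$, suppose $V$ is grading extendable with data $\{X_\alpha\}_{\alpha\in Q}$, and set $N:=\bigoplus_{\alpha\in Q} X_\alpha\otimes t^\alpha\subset M(Q,P,V)$. This is a graded subspace by construction, and the compatibility $\mathfrak{g}_\gamma X_\alpha\subset X_{\alpha+\gamma}$ immediately translates, through $x\cdot(v\otimes t^\alpha)=(xv)\otimes t^{\alpha+\gamma}$, into the statement that $N$ is a $\mathfrak{g}$-submodule. Since $V=\sum_\alpha X_\alpha\ne 0$, at least one $X_\alpha$ is nonzero, so $N\ne 0$; since $X_\alpha\ne V_{\bar\alpha}$ for some $\alpha$ by assumption, $N_\alpha\subsetneq M(Q,P,V)_\alpha$ and therefore $N\ne M(Q,P,V)$. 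Hence $M(Q,P,V)$ fails to be graded simple.

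For the direction ($\Rightarrow$)$^{\text{contrapositive}}$, suppose $M(Q,P,V)$ has a proper non-zero graded submodule $N=\bigoplus_\alpha N_\alpha$. Define $X_\alpha\subset V_{\bar\alpha}$ by $N_\alpha=X_\alpha\otimes t^\alpha$. The submodule condition on $N$ gives $\mathfrak{g}_\gamma X_\alpha\subset X_{\alpha+\gamma}$, so $W:=\sum_{\alpha\in Q} X_\alpha$ is a $\mathfrak{g}$-submodule of $V$; as $N\ne 0$ we have $W\ne 0$, and the simplicity of $V$ forces $W=V$. The containment $\sum_{\beta\in P}X_{\alpha+\beta}\subset V_{\bar\alpha}$ combined with $V=\bigoplus_{\bar\alpha}V_{\bar\alpha}$ and $V=\sum_\alpha X_\alpha$ forces equality $V_{\bar\alpha}=\sum_{\beta\in P}X_{\alpha+\beta}$ in each graded component. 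Finally, since $N\ne M(Q,P,V)$, some $X_\alpha$ is a proper subspace of $V_{\bar\alpha}$, so $V$ is grading extendable.

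There is no genuine obstacle here; the argument is essentially a dictionary between graded submodules of $M(Q,P,V)$ and $Q$-coverings of $V$. The only point requiring a little care is that in passing from $N$ to the family $\{X_\alpha\}$, one must verify both that this family covers all of $V$ (which uses simplicity of $V$) and that it refines the $Q/P$-grading on the nose component-wise (which uses the directness of the $Q/P$-decomposition of $V$).
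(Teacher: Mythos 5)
Your proof is correct and uses essentially the same idea as the paper's: the forward (``grading extendable $\Rightarrow$ not graded simple'') direction is identical, and in the reverse direction both arguments boil down to pulling back a nonzero proper graded submodule $N$ to a family $\{X_\alpha\}$ and invoking simplicity of $V$ to see that $\sum_\alpha X_\alpha = V$. The only cosmetic difference is that the paper packages the ``covering'' conclusion via the quotient $M(Q,P,V)/(M(Q,P,V)\cap(V\otimes I))\cong V$ and the induced epimorphism $\pi$, whereas you argue directly with the family $\{X_\alpha\}$ and the directness of $V=\bigoplus_{\bar\alpha}V_{\bar\alpha}$; your version makes the use of simplicity of $V$ slightly more explicit, which is a minor improvement in exposition but not a different method.
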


\begin{proof}
If $V$ is  grading extendable, there is a decomposition
$\displaystyle V=\sum_{\alpha\in Q}X_{\alpha}$ with $\displaystyle
V_{\bar{\alpha}}=\sum_{{\beta}\in P}X_{{\alpha}+\beta}$, for any
${\alpha}\in Q$, such that  $\mathfrak{g}_\alpha X_{\beta}\subset
X_{\alpha+\beta}$ and at least one $X_{\alpha+\beta}\ne
V_{\bar{\alpha}}$. Then the module $M(Q,P,V)$ has a nonzero proper
$Q$-graded submodule
$$\bigoplus_{\alpha\in Q}X_{\alpha}\otimes t^\alpha.
$$
Thus  $M(Q,P,V)$ is not $Q$-graded simple.

Now suppose that $M(Q,P,V)$ is not $Q$-graded simple.
Consider the ideal $I$ in
$\mathbb{C}Q$ generated by $\{t^{\alpha}-1: \alpha\in P\}$ and let
\begin{displaymath}
N=M(Q,P,V)/(M(Q,P,V))\cap(V\otimes I)).
\end{displaymath}
The module $N$ is naturally $Q/P$-graded and is, in fact, isomorphic
to the $Q/P$-graded module $V$ with the original grading. We have
that
\begin{displaymath}
N=\bigoplus_{\bar\alpha\in Q/P}V_{\bar\alpha}\otimes t^{\bar\alpha},
\end{displaymath}
where $\{t^{\bar\alpha}:\bar\alpha\in Q/P\}$ is a basis for the
group algebra of $Q/P$. Let
\begin{displaymath}
\pi: M(Q,P,V)\tto N,
\end{displaymath}
be the canonical epimorphism.

Take a proper $Q$-graded submodule $\displaystyle
X=\bigoplus_{\alpha\in Q}X_{\alpha}\otimes t^{\alpha}$ of $
M(Q,P,V)$. Since $\pi(X)=N$, we have $\displaystyle
V_{\bar{\alpha}}=\sum_{{\beta}\in P}X_{{\alpha}+\beta}$ for any
${\alpha}\in Q$ and, also, $\mathfrak{g}_\alpha X_{\beta}\subset
X_{\alpha+\beta}$. Since $W$ is proper, we have $X_\alpha\ne V_{\bar
\alpha}$.
 Thus $V$ is  grading extendable.
\end{proof}

For any $f\in \hat Q$ and any $\mathfrak{g}$-module $V$, we define a
new module $V^f$ as follows: as a vector space, we set $V^f:=V$, and the action of
$\mathfrak{g}$ on $V^f$ is given by
\begin{displaymath}
x_\alpha\circ v=f(\alpha)x_\alpha v, \,\,\text{ for all }\,\, \ x_\alpha\in  \mathfrak{g}_\alpha, v\in V.
\end{displaymath}

If $W$ is a $Q$-graded  $\mathfrak{g}$-module, for any $f\in \hat Q$,   the linear map
$$\tau_f:W\to W^f; \tau_f(w_\alpha)=f(\alpha)w_\alpha, \forall \alpha\in Q, w_\alpha\in  W_\alpha,$$
is a $Q$-graded module isomorphism since$$\tau_f(x_\alpha w)=f(\alpha)x_\alpha \tau_f(w) =x_\alpha\circ \tau_f(w) $$ for all $\alpha\in Q, x_\alpha\in \mathfrak{g}_\alpha, w\in W_\alpha.$ But $\tau_f: W\to W$ is generally not a $\mathfrak{g}$-module homomorphism from $W$ to $W$. However $\tau_f(N)$ is a submodule of $W$ for any submodule $N$ of $W$.

%The following statement, the proof of which is left as an exercise to the reader,
%provides a useful description of fine gradings.
%
%\begin{lemma}\label{lemabsd}
%Let $P$ be a   subgroup of the abelian group $Q$, and $V$ be a $Q/P$-graded module.
%Then the grading on $V$ is fine if and only if, for any $f\in \hat Q$, the isomorphism
%$V^f\simeq V$ implies that $f(\alpha)=1$, for any $\alpha\in P$.
%\end{lemma}

\subsection{Classification of graded simple modules: the case of finite $Q$}\label{s4.4.0}

In this section we assume that $Q$ is a finite abelian group, 
the   field $\Bbbk$ contains a primitive root of unity 
of order $|Q|$, and $\mathfrak{g}=\displaystyle\bigoplus_{\alpha\in
Q}\mathfrak{g}_\alpha$  a $Q$-graded  Lie algebra over  $\Bbbk$.
Although the major part of both results and methods for the study of 
$Q$-graded simple modules is rather different from those for the study 
of $Q$-graded simple Lie algebras in Sections~\ref{s1.15} and \ref{s1.175}, 
there are some similarities.  We will omit similar  arguments in this section.

\begin{lemma}\label{Property190}
Assume that  $Q$ is  finite and $\Bbbk$ contains a primitive  
root of unity of order $|Q|$.  Then the module $$M(Q,P,V)=\bigoplus_{f\in \hat Q/P^\perp}V^f,$$
where each summand has a naturally induced $Q/P$-grading.
\end{lemma}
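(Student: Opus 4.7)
The plan is to exhibit a natural action of the subalgebra $\Bbbk P\subset\Bbbk Q$ on $M(Q,P,V)$ that commutes with the $\mathfrak{g}$-action, and then to decompose $M(Q,P,V)$ via the primitive idempotents of $\Bbbk P$. For $\gamma\in P$, the operator $t^\gamma\colon v\otimes t^\alpha\mapsto v\otimes t^{\alpha+\gamma}$ preserves $M(Q,P,V)$ because $\gamma\in P$ implies $\overline{\alpha+\gamma}=\bar\alpha$, and it commutes with the $\mathfrak{g}$-action since one acts on $V$ and the other on the $\Bbbk Q$-factor independently.

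Because $\Bbbk$ contains a primitive $|Q|$-th root of unity, it also contains a primitive $|P|$-th root, so $\Bbbk P$ splits as $\bigoplus_{g\in\hat P}\Bbbk e_g$ with $e_g=\frac{1}{|P|}\sum_{\beta\in P}g(\beta)^{-1}t^\beta$. This yields
\begin{displaymath}
M(Q,P,V)=\bigoplus_{g\in\hat P}e_g\cdot M(Q,P,V)
\end{displaymath}
as a direct sum of $\mathfrak{g}$-submodules, with $\hat P$ identified with $\hat Q/P^\perp$ via restriction (which is surjective with kernel $P^\perp$, cf.~Remark~\ref{remabgroup}).

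For each $g\in\hat P$, I would pick an extension $f\in\hat Q$ representing the coset $\bar f\in\hat Q/P^\perp$, and define a linear map $\Psi_g\colon V^f\to e_g\cdot M(Q,P,V)$ by $\Psi_g(v):=f(\alpha)^{-1}v\otimes e_g t^\alpha$ for $v\in V_{\bar\alpha}$. This is well-defined on cosets: replacing $\alpha$ by $\alpha+\gamma$ with $\gamma\in P$ multiplies $e_g t^\alpha$ by $g(\gamma)=f(\gamma)$, exactly cancelling the change in the factor $f(\alpha)^{-1}$. A short calculation shows that $\Psi_g$ intertwines the twisted action $x\circ v=f(\delta)xv$ on $V^f$ (for $x\in\mathfrak{g}_\delta$) with the given action on $e_g\cdot M(Q,P,V)$: the twist $f(\delta)$ is absorbed by the shift from $f(\alpha)^{-1}$ to $f(\alpha+\delta)^{-1}$. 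Since $V^f$ is simple and the dimensions match, $\Psi_g$ is a $\mathfrak{g}$-module isomorphism, and it manifestly respects the $Q/P$-grading inherited via $(e_g\cdot M(Q,P,V))_{\bar\alpha}=V_{\bar\alpha}\otimes e_g t^\alpha$.

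The main obstacle is the bookkeeping: ensuring that $\Psi_g$ is independent of the representative $\alpha$ of $\bar\alpha\in Q/P$ and that the scalar twist by $f(\alpha)^{-1}$ converts between the two module structures correctly. Once these checks are in place, the decomposition and the identification with the $V^f$'s follow immediately from the idempotent decomposition of $\Bbbk P$ together with the identification $\hat P\cong\hat Q/P^\perp$.
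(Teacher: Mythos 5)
Your proof is correct and takes essentially the same route the paper intends: the paper's one-line proof defers to the argument for Lemma~\ref{Property19}, which constructs the trivial-character component via $\underline{t}^{\alpha}=t^{\alpha}\sum_{\beta\in P}t^{\beta}$ (a scalar multiple of $t^{\alpha}e_{1}$) and then obtains the remaining summands as its $\tau_{f}$-translates, $f\in\hat{Q}$, concluding by a dimension count. Your idempotent decomposition of $\Bbbk P$ is the Fourier-dual formulation of the same idea and is arguably cleaner here, since it makes the indexing by $\hat{Q}/P^{\perp}\cong\hat{P}$ transparent and produces the explicit isomorphism $V^{f}\cong e_{g}\cdot M(Q,P,V)$ directly rather than by a counting argument; the only minor quibble is that the appeal to ``dimensions match'' is unnecessary (and slightly awkward when $V$ is infinite-dimensional) because $\Psi_{g}$ is visibly bijective with inverse $v\otimes e_{g}t^{\alpha}\mapsto f(\alpha)v$.
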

 
\begin{proof}  
The proof  is similar to that of Lemma \ref{Property19}.
\end{proof}

We note that neither $P$ nor $V$ in Lemma~\ref{Property190} 
are uniquely determined in general, cf. Example~\ref{exlast2}.
 
Now let $W=\displaystyle\bigoplus_{\alpha\in Q}W_\alpha$ be a
$Q$-graded  simple $\mathfrak{g}$-module.
As before, we define  $\mathrm{supp}(v)$ for any $v=\displaystyle\sum_{\alpha\in
Q}v_\alpha\in W$, where $v_\alpha\in W_\alpha$, and
$\mathrm{size}(N)$ for any subset $N$ of the module $W$.

We assume that $W$ is not simple as a $\mathfrak{g}$-module. Let $N$
be a proper nonzero submodule of $W$. Set  $r:=\mathrm{size}(N)>1$
and define
\begin{displaymath}
R(N):=\mathrm{span}\{v\in N:\mathrm{size}(v)=r\}.
\end{displaymath}
Then $R(N)$ is a non-homogeneous non-zero proper submodule of
$W$. We will say that a submodule $N$ of $W$ is {\em
pure} of size $r$ if $\mathrm{size}(N)=r$ and, moreover, $R(N)=N$.

For $\alpha\in Q$, define $\pi_{\alpha}:W \to W_{\alpha}$
as the projection with respect to the graded decomposition.
The following lemma and its proof are similar to 
Lemmata~\ref{Property12} and \ref{Property2}.

\begin{lemma}\label{Property12-0}
{\small\hspace{2mm}}

\begin{enumerate}[$($a$)$]
\item\label{Property12-0.1} For any nonzero $x\in N$ we  have
$|\mathrm{supp}(x)|>1$.
\item\label{Property12-0.2} We have $\pi_{\alpha}(N)=W_\alpha$ for each $\alpha\in Q$.
\item A submodule $N$ of $W$ is $Q$-graded if and only if $\tau_f(N)\subset N$ for all $f\in \hat Q$.
\end{enumerate}
\end{lemma}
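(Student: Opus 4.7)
The plan is to mirror, in the module setting, the arguments from Lemmata~\ref{Property12} and \ref{Property2}, replacing ideals of $\mathfrak{g}$ by submodules of $W$ and graded simplicity of $\mathfrak{g}$ by graded simplicity of $W$.

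For (a), I would form the subspace $J := \bigoplus_{\alpha \in Q}(N \cap W_\alpha)$ consisting of the homogeneous elements of $N$. This is a $Q$-graded submodule of $W$ because $\mathfrak{g}_\beta \cdot W_\alpha \subset W_{\alpha+\beta}$, and it is contained in the proper submodule $N$, so it is a proper graded submodule of $W$. Graded simplicity of $W$ forces $J = 0$, i.e.\ $N$ contains no nonzero homogeneous element, which is exactly claim (a). For (b), I would set $\bar N := \sum_{\alpha \in Q}\pi_\alpha(N)$ and verify that $\bar N$ is a $Q$-graded submodule of $W$. Closure under the $\mathfrak{g}$-action follows from the identity $x_\beta \cdot \pi_\alpha(y) = \pi_{\alpha+\beta}(x_\beta \cdot y)$ for $y \in N$ and $x_\beta \in \mathfrak{g}_\beta$. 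Since $\bar N$ is a nonzero $Q$-graded submodule of the graded-simple module $W$, we have $\bar N = W$, which by construction forces $\pi_\alpha(N) = W_\alpha$ for every $\alpha \in Q$.

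For (c), the implication $(\Rightarrow)$ is immediate: if $N = \bigoplus_\alpha N_\alpha$ and $n_\alpha \in N_\alpha$, then $\tau_f(n_\alpha) = f(\alpha) n_\alpha \in N$, so $\tau_f(N) \subset N$. For $(\Leftarrow)$, I would take $n = \sum_{\alpha \in \mathrm{supp}(n)} n_\alpha \in N$ and apply $\tau_f$ for each $f \in \hat Q$ to obtain the family of relations $\tau_f(n) = \sum_\alpha f(\alpha) n_\alpha \in N$. Because $\hat Q \cong Q$ under our hypothesis on $\Bbbk$, the same linear-algebra argument used in Lemma~\ref{lemnn2} applies: the character matrix $(f(\alpha))_{f, \alpha}$ admits a square invertible submatrix, so each $n_\alpha$ can be recovered as a $\Bbbk$-linear combination of the $\tau_f(n)$'s and therefore lies in $N$.

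The main obstacle is essentially none: the three claims are direct transcriptions of the algebra-level lemmata already proved in the paper, and the one technical input, namely invertibility of the character matrix in (c), is already part of the paper's standard toolkit.
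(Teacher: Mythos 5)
Your proof is correct and follows exactly the route the paper intends: the paper gives no explicit proof, stating only that it is ``similar to Lemmata~\ref{Property12} and \ref{Property2},'' and your argument is a faithful transcription of those ideal-level arguments into the module setting (the homogeneous part of $N$ as a graded submodule for (a), the sum of projections as a graded submodule for (b), and invertibility of the character matrix for the nontrivial direction of (c)). One small improvement over the paper's own Lemma~\ref{Property12}: you correctly take the \emph{direct sum} $\bigoplus_\alpha(N\cap W_\alpha)$ rather than the bare set of elements of support $\le 1$, which is not itself a subspace.
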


As we did before, we define
\begin{displaymath}
\mathrm{Inv}(N)=\{f\in \hat{Q}\,|\,\tau_f(N)=N\} ,
\end{displaymath} and set
$$P={\mathrm{Inv}}(N)^{\perp}:=\{\alpha\in Q : f(\alpha)=1, {\text{ for all }}
f\in \mathrm{Inv}(N)\}.$$ We know  that
$|P|=|\hat{Q}/\mathrm{Inv}(N)|$.
For $\mathbf{N}:=\{\tau_f(N)\,:\, f\in \hat{Q}\}$, we have that
$|\mathbf{N}|=|\hat{Q}/\mathrm{Inv}(N)|$.

Similarly to Lemma~\ref{Existence} and its proof, we have the following.

\begin{lemma}\label{Existence0}
Assume that  $W$ is a $Q$-graded simple $\mathfrak{g}$-module that 
is not simple. Then   $W$ has a   non-homogeneous non-zero proper 
submodule $N$ so that different submodules in $\mathbf{N}$ have zero intersection.
\end{lemma}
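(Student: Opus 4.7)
The plan is to mimic the argument for Lemma~\ref{Existence}, replacing ideals by submodules and $\hat{Q}_0$ by $\hat{Q}$. The key observation is that each $\tau_f$ preserves the grading components and never scales any homogeneous component by $0$, so $\mathrm{supp}(\tau_f(v))=\mathrm{supp}(v)$ for every $v\in W$. In particular, $\tau_f$ sends submodules of $W$ to submodules of $W$ (as noted right before the subsection) and preserves the notion of size.

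First I would pick any proper nonzero submodule $N\subset W$; such an $N$ exists since $W$ is not simple. Since $W$ is $Q$-graded simple, $N$ cannot be $Q$-graded (otherwise it would be $0$ or $W$), so in particular $N$ is non-homogeneous and $\mathrm{size}(N)\geq 2$ by Lemma~\ref{Property12-0}\eqref{Property12-0.1}. The plan is now to iteratively modify $N$, each time either strictly enlarging the subgroup $\mathrm{Inv}(N)$ of $\hat{Q}$ or strictly enlarging $\mathrm{size}(N)$; since both quantities are bounded above (by $|\hat{Q}|=|Q|$ and by $|Q|$ respectively, as $Q$ is finite), the process must terminate, and the resulting $N$ will satisfy the conclusion.

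The iteration step goes as follows. Suppose the current $N$ fails the conclusion, i.e.\ there exist distinct $J,J'\in\mathbf{N}$ with $J\cap J'\neq 0$. Translating by a suitable $\tau_g$, we may assume $N\cap\tau_f(N)\neq 0$ for some $f\in\hat{Q}\setminus\mathrm{Inv}(N)$. Set $N_1:=N\cap\tau_f(N)$; it is a proper nonzero submodule of $W$, non-homogeneous (else $0$ or $W$), and one checks directly that $\mathrm{Inv}(N)\subseteq\mathrm{Inv}(N_1)$ because each $\tau_g$ with $g\in\mathrm{Inv}(N)$ commutes with $\tau_f$ and preserves $N$. Let $r=\mathrm{size}(N)$ and $r_1=\mathrm{size}(N_1)$, and consider $N_2:=R(N_1)$, the submodule of $W$ spanned by $\{x\in N_1:|\mathrm{supp}(x)|=r_1\}$. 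If $x\in N_1$ has $|\mathrm{supp}(x)|=r_1$, then $x=\tau_f(y)$ for some $y\in N$, and $y$ has the same support as $x$, so $|\mathrm{supp}(y)|=r_1\geq r$. If $y\notin\Bbbk x$, then a suitable nonzero linear combination of $x,y\in N$ has strictly smaller support, forcing $r<r_1$; otherwise $y\in\Bbbk x\subset N_1$, so $\tau_f^{-1}(x)\in N_1$ for every size-$r_1$ generator of $N_2$, giving $\tau_f(N_2)\subseteq N_2$ and hence $f\in\mathrm{Inv}(N_2)\supsetneq\mathrm{Inv}(N)$. In the first case replace $N$ by $N_1$ (strictly increasing size); in the second, replace $N$ by $N_2$ (strictly enlarging $\mathrm{Inv}$, while $N_2$ is still proper, nonzero, and non-homogeneous by the same argument as for $N$).

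The main obstacle I anticipate is verifying cleanly that the replacement $N_2$ really is a non-homogeneous proper nonzero submodule (properness follows from $N_2\subset N_1\subsetneq W$, and non-triviality from $r_1\geq 2$), and, more delicately, that the $\hat{Q}$-action argument transfers verbatim from the Lie algebra setting where one also uses characters to distinguish homogeneous components. The linear-algebra identity underlying Lemma~\ref{lemnn2} is entirely formal and depends only on the invertibility of the character matrix $(f(\alpha+\beta))$, so it carries over to the module setting without change, which is what makes the size/$\mathrm{Inv}$ dichotomy above work.
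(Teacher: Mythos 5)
Your proposal is correct and is essentially the paper's own proof: the paper simply writes ``Similarly to Lemma~\ref{Existence} and its proof'' and leaves the adaptation to the reader, and your argument carries out exactly that adaptation — replacing ideals by submodules, $\hat{Q}_0$ by $\hat{Q}$, and relying on the already-noted fact that each $\tau_f$ sends submodules of $W$ to submodules and preserves supports. The only cosmetic slips are the claim $\tau_f(N_2)\subseteq N_2$ (the direct computation gives $\tau_f^{-1}(N_2)\subseteq N_2$, from which $\tau_f(N_2)=N_2$ follows since $\tau_f$ has finite order — the paper makes the same abbreviation) and the closing remark about Lemma~\ref{lemnn2}, which is not actually invoked in this argument.
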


Now we have the following statement.

\begin{lemma}\label{Existence0-1}
Assume that  $W$ is a $Q$-graded simple $\mathfrak{g}$-module that  
is not simple. Then   $W$ has a   non-homogeneous simple submodule.
\end{lemma}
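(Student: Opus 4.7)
The plan is to follow the template from the algebra case, specifically Lemma~\ref{Property3}(c), and show that the submodule produced by Lemma~\ref{Existence0} can be chosen to be simple. First I would invoke Lemma~\ref{Existence0} to fix a non-homogeneous non-zero proper submodule $N$ of $W$ whose $\hat Q$-orbit $\mathbf N$ has pairwise trivial intersection. Since $Q$ is finite, I can further insist that $\mathrm{Inv}(N)$ be minimal with respect to inclusion among all such choices.

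The first technical step is to establish the internal direct sum decomposition $W=\bigoplus_{\bar f\in\hat Q/\mathrm{Inv}(N)}\tau_f(N)$. The sum itself equals $W$ by graded simplicity, since it is $\hat Q$-invariant, hence $Q$-graded by Lemma~\ref{Property12-0}(c), and non-zero. Directness is then shown by considering the ``overlap submodule''
\[
X:=\sum_{\bar f\in\hat Q/\mathrm{Inv}(N)}\Bigl(\tau_f(N)\cap\sum_{\bar g\ne\bar f}\tau_g(N)\Bigr),
\]
which is $\hat Q$-invariant, hence $Q$-graded, and therefore equals $0$ or $W$. The case $X=W$ is to be excluded: it would force $X_{\bar 0}=\tau_0(N)\cap\sum_{\bar g\ne\bar 0}\tau_g(N)=N$, making $N$ redundant in the sum; feeding a suitable $\mathrm{Inv}(N)$-invariant proper submodule arising from this redundancy through the algorithm underlying Lemma~\ref{Existence0} would then violate the minimality of $\mathrm{Inv}(N)$.

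With the direct sum at hand, I argue by contradiction. Suppose $N$ were not simple as a $\mathfrak g$-module, and pick a proper non-zero submodule $N_1\subsetneq N$. By Lemma~\ref{Property12-0}(a), $N_1$ is non-homogeneous. For $f\in\mathrm{Inv}(N_1)\setminus\mathrm{Inv}(N)$, pairwise disjointness yields $N_1=\tau_f(N_1)\subseteq\tau_f(N)\cap N=0$, a contradiction; hence $\mathrm{Inv}(N_1)\subseteq\mathrm{Inv}(N)$, and the minimality of $\mathrm{Inv}(N)$ gives equality. Consequently,
\[
\widetilde N_1:=\bigoplus_{\bar f\in\hat Q/\mathrm{Inv}(N)}\tau_f(N_1)
\]
is a non-zero, $\hat Q$-invariant, hence $Q$-graded submodule of $W$, strictly contained in $\bigoplus_{\bar f}\tau_f(N)=W$. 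This contradicts the $Q$-graded simplicity of $W$, so $N$ is itself a (non-homogeneous) simple $\mathfrak g$-submodule of $W$.

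The main obstacle is the direct sum step. Pairwise trivial intersection of the orbit does not by itself imply directness for modules, because relations of length three or more cannot be ruled out by pairwise data; in the algebra case this was bypassed via the vanishing of the center of $\mathfrak g$ (coming from $[X_J,\mathfrak g]=0$), which has no direct module-theoretic counterpart. Here one must instead exploit the $\hat Q$-equivariance of the overlap submodule $X$, the minimality of $\mathrm{Inv}(N)$, and the graded simplicity of $W$ in tandem to rule out the case $X=W$.
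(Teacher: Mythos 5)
Your proposal takes a genuinely different route from the paper's, and it runs into exactly the difficulty you flag but do not actually resolve. The crux is the claimed decomposition $W=\bigoplus_{\bar f\in\hat Q/\mathrm{Inv}(N)}\tau_f(N)$. You introduce the overlap submodule $X$, note that it is $\hat Q$-invariant hence graded, hence $0$ or $W$; but your exclusion of $X=W$ does not go through. $X$ is a \emph{sum} of the terms $X_{\bar f}=\tau_f(N)\cap\sum_{\bar g\neq\bar f}\tau_g(N)$, so $X=W$ does not force the single term $X_{\bar 0}$ to equal $N$, and even if it did, the appeal to ``feeding a suitable $\mathrm{Inv}(N)$-invariant proper submodule through the algorithm'' is not an argument. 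As you yourself observe, pairwise-trivial intersections of the orbit members are perfectly compatible with $N\subseteq\sum_{\bar g\neq\bar 0}\tau_g(N)$ (three distinct lines in a plane already exhibit this), and the algebra-case way out --- vanishing of the center via Lemma~\ref{Property2}(\ref{Property2.2}) --- has no module counterpart. So the direct-sum step is a genuine gap, not a technicality.

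There is a second gap of the same flavor in your final step: from $\mathrm{Inv}(N_1)\subseteq\mathrm{Inv}(N)$ you conclude equality ``by minimality of $\mathrm{Inv}(N)$,'' but minimality was taken over submodules satisfying the conclusion of Lemma~\ref{Existence0}, and you never verify that $\mathbf{N}_1$ has pairwise trivial intersections: for $h\in\mathrm{Inv}(N)\setminus\mathrm{Inv}(N_1)$, both $N_1$ and $\tau_h(N_1)$ sit inside $N=\tau_h(N)$ and may well overlap. The paper circumvents both problems by a mechanism entirely absent from your proposal: it imposes a \emph{second} extremality condition (maximality of $\mathrm{size}(N)$ among the admissible $N$'s), replaces $N$ by the cyclic submodule generated by a single element $z=z_1+\dots+z_r$ of maximal size (so that $N$ is pure), uses minimality of $\mathrm{Inv}(N)$ to show that $\alpha_1,\dots,\alpha_r$ generate $P$, and then produces characters $f_1,\dots,f_r$ with $(f_i(\alpha_j))$ invertible so that the homogeneous pieces $z_j$ lie in $\sum_i\tau_{f_i}(N)$, forcing this sum to be all of $W$. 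The contradiction with a proper $N'\subsetneq N$ then comes from size maximality (any pure proper submodule must again have size exactly $r$), which you never invoke. In short, your argument is structurally parallel to Lemma~\ref{Property3} but omits the size-maximality/cyclicity/character-matrix machinery that is precisely what replaces the unavailable ``center is zero'' argument in the module setting.
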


\begin{proof} 
Since $Q$ is finite, we may take a  non-zero proper submodule 
$N$ of $W$ having the property as in Lemma~\ref{Existence0} 
and such that $\mathrm{Inv}(N)$ is minimal with respect to inclusion and, 
further,  size$(N)$ is maximal. Then $N$ is a $Q/P$-graded submodule
\begin{displaymath}
N=\bigoplus_{\bar{\alpha}\in Q/P}N_{\bar{\alpha}}.
\end{displaymath} 

Note that size$(N)=r>1$. Take a nonzero 
$z=z_1+z_2+\cdots +z_r\in N_{\bar\alpha}$ where $z_i\in W_{\alpha+\alpha_i}, \alpha_i\in P$.
Replace our $N$ by the submodule of $W$ generated by this $z$. Then 
we still have the property of Lemma~\ref{Existence0}, 
$\mathrm{Inv}(N)$ is minimal, size$(N)$ is maximal, and, furthermore, $N$ is pure.

Since  $\mathrm{Inv}(N)$ is minimal, it follows that  
$\alpha_1,\alpha_2,\cdots,\alpha_r$ generate the group $P$. There are 
$f_1=id, f_2, \cdots, f_r\in \hat Q$ such that the $r\times r$ matrix 
$(f_i(\alpha_j))$ is invertible. Then we have the direct sum
\begin{equation}\label{WW}
W=\bigoplus_{i=1}^r\tau_{f_i}(N)
\end{equation}
since the submodule on the right hand side has size $1$.

If $N$ is not simple, we take a proper nonzero pure submodule $N'$ of $N$.  We see that $\mathrm{Inv}(N)=\mathrm{Inv}(N')$, and also
\begin{displaymath}
N'=\bigoplus_{\bar{\alpha}\in Q/P}N'_{\bar{\alpha}}.
\end{displaymath} 
Note that size$(N')\ge r$.  Since  size$(N)$ is maximal, we have  size$(N')= r$. 
We may assume that $N'$ is generated by a nonzero element 
$z'=z'_1+z'_2+\cdots +z'_r\in N_{\bar\alpha}$ where $z'_i\in W_{\alpha'+\alpha'_i}$, 
for $\alpha'_i\in P$. Then $\alpha'_1,\alpha'_2,\cdots,\alpha'_r$ also generate 
the group $P$, and the $r\times r$ matrix  $(f_i(\alpha'_j))$ is invertible.
Similarly to the above, we have the direct sum
\begin{equation}\label{WW-1}
W=\bigoplus_{i=1}^r\tau_{f_i}(N').
\end{equation}
Comparing \eqref{WW} with \eqref{WW-1}, we get
\begin{equation}\label{WW-2}
\bigoplus_{i=1}^r\tau_{f_i}(N)=\bigoplus_{i=1}^r\tau_{f_i}(N').
\end{equation}
As $N'\subsetneq N$, we have $\tau_{f_i}(N')\subsetneq \tau_{f_i}(N)$,
for each $i$, and hence \eqref{WW-2} is impossible. 
This implies that $N$ is simple.
\end{proof}

\begin{theorem}\label{FM0}
Let $Q$ be a finite  additive abelian group and $W$ be a 
$Q$-graded simple $\mathfrak{g}$-module  over $\Bbbk$ such that
$\mathrm{char}(\Bbbk)$ contains a primitive root of unity 
of order $|Q|$.  Then there exist a subgroup $P\subset Q$, 
and a   simple $\mathfrak{g}$-module $V$ with a nonextendable   
$Q/P$-grading so that $W$ is  $Q$-graded isomorphic to  $M(Q, P, V)$.
\end{theorem}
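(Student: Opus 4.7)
The plan is to parallel the strategy used for Lie algebras in Sections~\ref{s1.15} and \ref{s1.175}, replacing ideals by submodules. If $W$ is simple as an ungraded $\mathfrak{g}$-module, I would take $P=\{0\}$ and $V=W$ with its original $Q$-grading: then $M(Q,\{0\},V)=W$, and $V$ is automatically nonextendable since any $Q$-covering with $P=\{0\}$ must satisfy $X_\alpha=V_{\bar\alpha}=V_\alpha$. Otherwise, I apply Lemma~\ref{Existence0-1} to obtain a non-homogeneous simple $\mathfrak{g}$-submodule $V\subset W$, and set $P:=\mathrm{Inv}(V)^\perp$. The simultaneous-eigenspace decomposition of $V$ under $\{\tau_f : f\in\mathrm{Inv}(V)\}$ produces a natural $Q/P$-grading $V=\bigoplus_{\bar\alpha\in Q/P}V_{\bar\alpha}$ with $V_{\bar\alpha}=V\cap\bigoplus_{\beta\in P}W_{\alpha+\beta}$, compatible with $\mathfrak{g}$ viewed as a $Q/P$-graded Lie algebra, making $V$ a $Q/P$-graded simple $\mathfrak{g}$-module.

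The key technical step is the module analogue of Lemma~\ref{lemnn2}: for any nonzero $v\in V_{\bar\alpha}$ with decomposition $v=\sum_{\beta\in P}v_{\alpha+\beta}$ (where $v_{\alpha+\beta}\in W_{\alpha+\beta}$), the set $\{v_{\alpha+\beta} : \beta\in P\}$ consists of linearly independent, hence nonzero, elements. The proof of Lemma~\ref{Existence0-1} already supplies the decomposition $W=\bigoplus_{i=1}^{r}\tau_{f_i}(V)$ with $r=|P|$ and $\{f_i\}$ coset representatives of $\hat Q/\mathrm{Inv}(V)$. Applying $\tau_{f_i}$ gives $\tau_{f_i}(v)=\sum_{\beta\in P}f_i(\alpha+\beta)v_{\alpha+\beta}$; these elements lie in distinct summands and are each nonzero, hence linearly independent. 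Invertibility of the character matrix $(f_i(\alpha+\beta))_{i,\beta}$ (since the $f_i$ restrict to distinct characters of $P$ given that $\mathrm{Inv}(V)=P^\perp$) then lets one recover each $v_{\alpha+\beta}$ as a linear combination of the $\tau_{f_i}(v)$'s, yielding their linear independence.

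With this in hand, I define the degree-zero $Q$-graded linear map $\psi: M(Q,P,V)\to W$ by $\psi(v\otimes t^\alpha):=v_\alpha$ for $v\in V_{\bar\alpha}$, where $v_\alpha$ denotes the $W_\alpha$-component of $v$ inside $W$. The identity $(xv)_{\alpha+\gamma}=xv_\alpha$ for $x\in\mathfrak{g}_\gamma$ shows that $\psi$ is a $\mathfrak{g}$-module homomorphism. Injectivity is immediate from the preceding step, since a nonzero $v\in V_{\bar\alpha}$ forces $v_\alpha\neq 0$; surjectivity follows because $\mathrm{Im}(\psi)$ is a nonzero $Q$-graded submodule of the $Q$-graded simple $W$. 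Thus $\psi$ is a $Q$-graded isomorphism, and Lemma~\ref{Property21} then forces $V$ to be nonextendable. The main obstacle is the module analogue of Lemma~\ref{lemnn2}, but this essentially reduces to the character-matrix invertibility already implicit in the proof of Lemma~\ref{Existence0-1}.
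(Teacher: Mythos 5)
Your overall strategy (locate a simple non-homogeneous submodule $V$, read off a $Q/P$-grading on $V$ from the eigenspace decomposition under $\mathrm{Inv}(V)$, and build the comparison map $\psi:M(Q,P,V)\to W$, $v\otimes t^\alpha\mapsto v_\alpha$) is the right one and closely parallels the paper, and the verifications that $\psi$ is $\mathfrak{g}$-linear, graded of degree zero, and surjective are fine. There is, however, a genuine gap in the injectivity step, which is exactly where the paper does its nontrivial work.

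You assert that the proof of Lemma~\ref{Existence0-1} ``already supplies the decomposition $W=\bigoplus_{i=1}^r\tau_{f_i}(V)$ with $r=|P|$''. What that proof actually provides is $W=\bigoplus_{i=1}^r\tau_{f_i}(V)$ with $r=\mathrm{size}(V)$ and with the $f_i$ chosen so that a certain $r\times r$ character matrix is invertible; it does \emph{not} identify $\mathrm{size}(V)$ with $|\mathrm{Inv}(V)^\perp|$. The minimal-support element $z\in V_{\bar\alpha}$ has $\mathrm{supp}(z)=\{\alpha,\alpha+\alpha_1,\dots,\alpha+\alpha_{r-1}\}$ with the $\alpha_i\in P':=\mathrm{Inv}(V)^\perp$, and one can show (as in Lemma~\ref{Existence0-1}) that $\{\alpha_1,\dots,\alpha_{r-1}\}$ \emph{generates} $P'$; but a generating set of size $r$ need not exhaust a group of order $|P'|$, so a priori $r<|P'|$ is possible. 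If $r<|P'|$, your map $\psi$ cannot be injective: by construction $\dim M(Q,P',V)=|P'|\cdot\dim V$, whereas the direct sum gives $\dim W=r\cdot\dim V$, so $\dim M(Q,P',V)>\dim W$. Equivalently, without the extra argument one cannot conclude that every component $v_{\alpha+\beta}$, $\beta\in P'$, of a nonzero $v\in V_{\bar\alpha}$ is nonzero.

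The paper closes this gap by (i) choosing the simple submodule $N$ with $\mathrm{Inv}(N)$ minimal \emph{and} $\mathrm{size}(N)$ maximal (a condition you do not impose), (ii) constructing the degree-$\alpha_i$ graded $\mathfrak g$-module automorphisms $\Lambda_{\alpha_i}$ of $W$ from the purity of $N$, and (iii) using the maximality of $\mathrm{size}(N)$ to show that the shifted support set $\{0,\alpha_1,\dots,\alpha_{r-1}\}$ is closed under addition, i.e.\ is a subgroup $P$. Together with the generating statement $\langle\alpha_1,\dots,\alpha_{r-1}\rangle=P'$ this forces $P=P'$ and hence $r=|P'|$, which is exactly what your ``module analogue of Lemma~\ref{lemnn2}'' needs. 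So the result you want to invoke is true, but it does not come for free from Lemma~\ref{Existence0-1}; it is precisely the content of the $\Lambda$-automorphism and size-maximality argument in the paper's proof of Theorem~\ref{FM0}.
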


\begin{proof} From the previous lemma we know that $W$ has 
a  simple $\mathfrak{g}$-submodule $N$. We may assume that  $\mathrm{Inv}(N)$ is minimal and size$(N)=r$ is maximal.
Let $P'={\rm Inv}(N)^\perp$. Then $N$ is  $Q/P'$-graded 
\begin{equation}
N=\bigoplus_{\bar{\alpha}\in Q/P'}N_{\bar{\alpha}}.
\end{equation}

Take a nonzero
$z=z_{\alpha}+z_{\alpha+\alpha_1}+\cdots z_{\alpha+\alpha_{r-1}}\in N$ where $z_\gamma\in W_\gamma$ and $\alpha_i\in P'$. We can define a linear map
$$
\Lambda_{\alpha_i}: W\to W,\,\,\, w_\gamma\mapsto w_{\gamma+\alpha_i},
$$
where $w=w_{\gamma}+w_{\gamma+\alpha_1}+\cdots
+w_{\gamma+\alpha_{r-1}}\in U(\mathfrak{g})_{\gamma-\alpha}z$, and 
$U(\mathfrak{g})_{\gamma-\alpha}$ is the homogeneous component of grading 
$\gamma-\alpha$ of the universal enveloping algebra. The map $\Lambda_{\alpha_i}$  
is well-defined since size$(N)=r$. Further $\Lambda_{\alpha_i}$  is a bijection. 
For any $y_\gamma\in U(\mathfrak{g})_{\gamma}$, from
\begin{displaymath}
y_\gamma w=y_\gamma w_{\beta}+y_\gamma w_{\beta+\alpha_1}+
\cdots+y_\gamma w_{\beta+\alpha_{r-1}}\in N
\end{displaymath}
we have that $\Lambda_{\alpha_i}(y_\gamma
w_{\beta})=y_\gamma \Lambda_{\alpha_i}(w_{\beta})$. Thus,
$\Lambda_{\alpha_i}$ is a $Q$-graded  $\mathfrak{g}$-module automorphism
of   $W$  which is, moreover,
homogeneous of degree $\alpha_i$. 

Let $P$ be the subset consisting all $\alpha_i\in $ so that there is a graded isomorphism $\Lambda_{\alpha_i}$ of W of degree $\alpha_i$. Then $P$ is a subgroup of $P'$.

If $P\ne\{0,\alpha_1,\cdots,\alpha_{r-1}\}$, there is another $\alpha_r\in P$. Then $N'=\{x+\Lambda_{\alpha_r}(x)|x\in N\}$ is a simple $\mathfrak{g}$-submodule isomorphic to $N$. Also $\mathrm{Inv}(N)=\mathrm{Inv}(N)$  and size$(N')>r$, which contradicts the choice of $N$. Thus   
$P=\{0,\alpha_1,\cdots,\alpha_{r-1}\}$, and $N$ has the natural $Q/P$-grading
\begin{equation}\label{Lam11}
N=\bigoplus_{\bar{\bar{\alpha}}\in Q/P}N_{\bar{\bar{\alpha}}}.
\end{equation} 

Let $P^\perp=\{f\in\hat Q| f(P)=1\}$. Then
\begin{equation}W=\bigoplus_{f\in \hat Q/P^\perp}\tau_{f}(N).\end{equation}\label{WWW}

From (9), (10), (11) we see that $N_{\bar{\bar{\alpha}}}=0$ or $N_{\bar{{\alpha}}}$. For example
$N_{\bar{\bar{\alpha}}}=0$ if $\alpha\in P'\setminus P$.

From this we see that $W$ is  $Q$-graded to the submodule of $M(Q, P, N)$.
This completes the proof.
\end{proof}

We illustrate our result by the following example.

\begin{example}\label{Ex1}
{\rm
Let  $\mathfrak{g}$ be the abelian Lie algebra with basis $g_{(0,0)}, g_{(1,0)},$ $ g_{(0,1)}$ 
with the $Q=\mathbb{Z}_2\times \mathbb{Z}_2$-grading given by 
\begin{displaymath}
\mathfrak{g}_{(i,j)}={\rm{span}}\{g_{(i,j)}\}\quad
{\rm{where}}\,\,\, g_{(1,1)}=0.
\end{displaymath}
Let $W$ be the $Q$-graded simple $\mathfrak{g}$-module with basis $w_{(1,0)}, w_{(0,0)}$ 
and the action
$$g_{(i,j)}\cdot w_{(0,0)}=w_{(i, j)},\,\,\, g_{(i,j)}\cdot w_{(1,0)}=w_{(i+1, j)},$$
where $w_{(0,1)}=w_{(1,1)}=0$.
Let $V=\Bbbk v$ be the one dimensional $\mathfrak{g}$-module with the action 
$$g_{(0,0)}\cdot v=v, \,\,g_{(1,0)}\cdot v=v,\,\, g_{(0,1)}\cdot v=0.$$
We see that $W$ is isomorphic to $M(Q, P, V)$ where $P=\mathbb{Z}_2\times \{0\}$. 
In the proof of Theorem~\ref{FM0}, $P'=Q$.
}
\end{example}

\subsection{Classification of graded simple modules: the case of infinite $Q$}\label{s4.4}

Let $Q$ be an additive abelian group and
$\mathfrak{g}=\displaystyle\bigoplus_{\alpha\in
Q}\mathfrak{g}_\alpha$  a $Q$-graded  Lie algebra over  an algebraically closed field
$\Bbbk$. Let $W=\displaystyle\bigoplus_{\alpha\in Q}W_\alpha$ be a
$Q$-graded  simple $\mathfrak{g}$-module such that $\dim(W)<|\Bbbk|$.

We assume that $W$ is not simple as a $\mathfrak{g}$-module. Let $N$
be a proper nonzero submodule of $W$. Set  $r:=\mathrm{size}(N)>1$.
We may assume that $N$  is  pure of size $r$.

Fix a nonzero element
$v=v_{\beta}+v_{\beta+\alpha_1}+\cdots v_{\beta+\alpha_{r-1}}\in N$ where $v_\gamma\in W_\gamma$, and 
 $\alpha_1,\alpha_2, \cdots ,\alpha_{r-1}\in Q$. We replace our $N$ by the submodule of $W$ generated by $v$. Then
 $N$  is  still pure of size $r$. We can define a linear map
\begin{equation}\label{Lam}
\Lambda_{N, \alpha_i}: W\to W,\,\,\, w_\gamma\mapsto w_{\gamma+\alpha_i},
\end{equation}
where $w=w_{\gamma}+w_{\gamma+\alpha_1}+\cdots
+w_{\gamma+\alpha_{r-1}}\in U(\mathfrak{g})_{\gamma-\beta}v$, and $U(\mathfrak{g})_{\gamma-\beta}$ is the homogeneous component of grading $\gamma-\beta$ of the universal enveloping algebra.

For any $y_\gamma\in U(\mathfrak{g})_{\gamma}$, from
\begin{displaymath}
y_\gamma w=y_\gamma w_{\beta}+y_\gamma w_{\beta+\alpha_1}+
\cdots+y_\gamma w_{\beta+\alpha_{r-1}}\in N
\end{displaymath}
we have that $\Lambda_{N, \alpha_i}(y_\gamma
w_{\beta})=y_\gamma \Lambda_{N, \alpha_i}(w_{\beta})$. Thus,
$\Lambda_{N, \alpha_i}$ is a $Q$-graded  $\mathfrak{g}$-module automorphism
of   $W$  which is, moreover,
homogeneous of degree $\alpha_i$.  From Theorem~\ref{Schur}
it follows that $\Lambda_{N, \alpha_i}$ does not depend on the
choice of $N$ up to a scalar multiple. We thus simplify the notation
$\Lambda_{N, \alpha_i}$ to $\Lambda_{\alpha_i}$. Set
\begin{gather*}
P'=\{\alpha\in Q:
{\text{there is a degree $\alpha$ module isomorphism of }} W\},\\
D'=\mathrm{span}\{\Lambda_{ \alpha}:\alpha\in P'\}.
\end{gather*}
The following result is trivial.

\begin{lemma}\label{Property11}
{\hspace{2mm}}

\begin{enumerate}[$($a$)$]
\item\label{Property11.1} The set $P'$ is a nonzero subgroup of $Q$.
\item\label{Property11.2}  The vector space $D'$ has the structure of
an associative $P'$-graded division algebra induced by composition of
endomorphisms where $D'$ is  naturally $Q$-graded with
$\deg(\Lambda_{ \alpha})={ \alpha}$.
\end{enumerate}
\end{lemma}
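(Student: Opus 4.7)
The plan is to verify both parts directly, leveraging the Graded Schur's Lemma (Theorem~\ref{Schur}) which guarantees that any two automorphisms of $W$ of the same degree differ by a scalar, so that each $\Lambda_\alpha$ is well-defined up to a nonzero scalar.

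For part~\eqref{Property11.1}, I would first argue that $P'$ is nonempty: the identity map gives $0\in P'$. To see that $P'$ is nonzero, note that since $W$ is graded simple but not simple as an ungraded module, the submodule $N$ has $r=\mathrm{size}(N)>1$, and the construction in \eqref{Lam} yields nonzero homogeneous automorphisms $\Lambda_{N,\alpha_i}$ of nonzero degrees $\alpha_i$ for $i=1,\dots,r-1$. Hence some nonzero $\alpha_i$ lies in $P'$. For closure under addition and inverses, if $\sigma,\sigma'$ are automorphisms of $W$ of degrees $\alpha,\beta\in P'$, then $\sigma\circ\sigma'$ is a module automorphism of $W$ homogeneous of degree $\alpha+\beta$, and $\sigma^{-1}$ is a module automorphism homogeneous of degree $-\alpha$. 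Hence $\alpha+\beta,-\alpha\in P'$ and $P'$ is a subgroup of $Q$.

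For part~\eqref{Property11.2}, the Graded Schur's Lemma implies that for each $\alpha\in P'$ the space of degree-$\alpha$ module homomorphisms $W\to W$ is one-dimensional, spanned by $\Lambda_\alpha$. Consequently
\begin{displaymath}
D'=\bigoplus_{\alpha\in P'}\Bbbk\,\Lambda_\alpha,
\end{displaymath}
exhibiting $D'$ as a $P'$-graded (hence in particular $Q$-graded) vector space with $\deg(\Lambda_\alpha)=\alpha$. Composition of homogeneous endomorphisms is again homogeneous with degrees adding, so $\Lambda_\alpha\circ\Lambda_\beta$ is a nonzero degree-$(\alpha+\beta)$ automorphism; by Schur it must equal a nonzero scalar multiple of $\Lambda_{\alpha+\beta}$. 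This makes $D'$ an associative $P'$-graded $\Bbbk$-algebra. Finally, every nonzero homogeneous element of $D'$ is a scalar multiple of an automorphism $\Lambda_\alpha$, and thus invertible in $D'$ with inverse a scalar multiple of $\Lambda_{-\alpha}$; this is precisely the definition of a graded division algebra.

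There is no real obstacle here: the only nontrivial input is the Graded Schur's Lemma (used to collapse each graded component of $D'$ to a one-dimensional space), and everything else is a routine verification of the subgroup, associativity, and invertibility axioms. For this reason, as the excerpt already indicates, the lemma is essentially a direct formal consequence of Theorem~\ref{Schur} and the definition of $\Lambda_\alpha$.
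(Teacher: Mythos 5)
Your proof is correct. The paper gives no proof at all, declaring the lemma ``trivial,'' and your argument supplies precisely the expected verification: the Graded Schur's Lemma (Theorem~\ref{Schur}) gives one-dimensionality of each homogeneous component of $D'$ and guarantees that a composition $\Lambda_\alpha\circ\Lambda_\beta$ is a nonzero scalar multiple of $\Lambda_{\alpha+\beta}$; closure of $P'$ under addition and negation follows by composing and inverting automorphisms, $0\in P'$ via the identity, and $P'\neq\{0\}$ because the standing assumption that $W$ is not simple forces $r=\mathrm{size}(N)>1$ and hence at least one nonzero $\alpha_i$ with $\Lambda_{N,\alpha_i}$ a degree-$\alpha_i$ automorphism. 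This is exactly the routine verification the authors chose to omit, so there is no divergence in approach.
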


Now we need the following lemma.

\begin{lemma}\label{Property123}
The $Q$-graded associative $P'$-graded division
algebra $D'$ has a maximal $Q$-graded commutative subalgebra
$D$.
\end{lemma}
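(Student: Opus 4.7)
The plan is a standard Zorn's Lemma argument applied to the poset of $Q$-graded commutative subalgebras of $D'$. Consider the collection $\mathcal{S}$ of all unital $Q$-graded commutative subalgebras of $D'$, ordered by inclusion. First I would note that $\mathcal{S}$ is nonempty: since $\Lambda_0$ is the identity of $D'$ and is homogeneous of degree $0$, the one-dimensional subalgebra $\Bbbk\cdot\Lambda_0$ lies in $\mathcal{S}$.

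Next I would verify that every chain in $\mathcal{S}$ has an upper bound. Given a totally ordered family $\{D_\lambda\}_{\lambda\in\Lambda}\subset\mathcal{S}$, set $D^{\ast}:=\bigcup_{\lambda}D_\lambda$. For any $x,y\in D^{\ast}$, the chain property places $x$ and $y$ in a common $D_\nu$, and since $D_\nu$ is a commutative subalgebra, $x+y$, $xy$, and the identity all lie in $D_\nu\subset D^{\ast}$ with $xy=yx$. To see that $D^{\ast}$ is $Q$-graded, set $D^{\ast}_\alpha:=\bigcup_{\lambda}(D_\lambda)_\alpha$ for each $\alpha\in Q$; since each $D_\lambda$ is graded and inclusions within the chain respect the grading (as $(D_\lambda)_\alpha=D_\lambda\cap D'_\alpha\subset D_\mu\cap D'_\alpha=(D_\mu)_\alpha$ whenever $D_\lambda\subset D_\mu$), one obtains the direct sum decomposition $D^{\ast}=\bigoplus_{\alpha\in Q}D^{\ast}_\alpha$. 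Hence $D^{\ast}\in\mathcal{S}$ and it is an upper bound for the chain.

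By Zorn's Lemma, $\mathcal{S}$ contains a maximal element $D$, which is the required maximal $Q$-graded commutative subalgebra of $D'$. The only point that might look subtle is compatibility of the grading with directed unions, but this is handled by the observation above that the homogeneous components of a graded subalgebra are its intersections with the ambient homogeneous components, so the grading passes through directed unions without difficulty. No use is made here of the division-algebra structure or of $P'$-support; the argument only uses that $D'$ is a $Q$-graded unital associative algebra.
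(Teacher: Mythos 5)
Your proposal is correct and follows essentially the same route as the paper: apply Zorn's Lemma to the poset of $Q$-graded commutative subalgebras of $D'$, checking that the union of a chain is again such a subalgebra. You are somewhat more careful than the paper (which only writes out a countable ascending chain, and omits the verification that the grading passes to directed unions), but the underlying argument is identical.
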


\begin{proof}For any ascending chain of $Q$-graded commutative
subalgebras  of $D'$: $A_1\subset A_2, \subset \cdots\subset A_n\subset
\cdots$, the union $\cup_{i=1}^{\infty}A_i$ is also a $Q$-graded commutative
subalgebras of $D'$. 
Therefore $D'$ has a maximal $Q$-graded commutative subalgebra
$D$ by Zorn's lemma.
\end{proof}

Let $D$ be a maximal $Q$-graded commutative subalgebra of $D'$ and
let $P=\mathrm{supp}(D)$ which is a subgroup of $Q$. We remark that, 
unlike the algebra case, here, in general, $D\ne D'$, see Examples~\ref{exlast2}.

From Lemma~1.2.9(i) in \cite{P}, we have the following:

\begin{lemma}\label{GroupAlg}
The $P$-graded associative division algebra $D$
is isomorphic to the group algebra $\Bbbk P$.
\end{lemma}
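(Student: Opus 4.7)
The plan is to realize $D$ as a twisted commutative group algebra $\Bbbk^{c}P$ for a symmetric $2$-cocycle $c\colon P\times P\to \Bbbk^{*}$, and then to trivialize the class $[c]$ using divisibility of $\Bbbk^{*}$. The paper invokes Lemma~1.2.9(i) of \cite{P} for the cocycle-trivialization step; the main task for my proof would be to verify its hypotheses and indicate why the trivialization works.

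First, I would identify the underlying structure of $D$. By Theorem~\ref{Schur}, any two degree-$\alpha$ module automorphisms of $W$ differ by a scalar, so $D'_\alpha=\Bbbk\Lambda_\alpha$ for each $\alpha\in P'$. Since $D\subseteq D'$ is a $P$-graded subalgebra with $\mathrm{supp}(D)=P$, each homogeneous component $D_\alpha$ with $\alpha\in P$ is one-dimensional; in particular $D_{0}=\Bbbk$ (the unit of $D$ spans $D_{0}$). Fix $e_\alpha\in D_\alpha\setminus\{0\}$ for each $\alpha\in P$, with $e_{0}=1$. Because $D$ is a division algebra, each product $e_\alpha e_\beta$ is a nonzero element of $D_{\alpha+\beta}$, so $e_\alpha e_\beta=c(\alpha,\beta)\,e_{\alpha+\beta}$ for a unique $c(\alpha,\beta)\in\Bbbk^{*}$. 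Associativity of multiplication yields the $2$-cocycle identity, and commutativity of $D$ gives $c(\alpha,\beta)=c(\beta,\alpha)$, exhibiting $D$ as the symmetric twisted group algebra $\Bbbk^{c}P$.

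Second, it suffices to produce $\lambda\colon P\to\Bbbk^{*}$ satisfying $c(\alpha,\beta)=\lambda(\alpha)\lambda(\beta)\lambda(\alpha+\beta)^{-1}$; then the assignment $t^{\alpha}\mapsto\lambda(\alpha)^{-1}e_\alpha$ extends linearly to the desired $P$-graded algebra isomorphism $\Bbbk P\to D$. Symmetric $2$-cocycles $P\times P\to\Bbbk^{*}$ modulo symmetric coboundaries are in bijection with $\mathrm{Ext}^{1}_{\mathbb{Z}}(P,\Bbbk^{*})$, since such cocycle classes classify abelian extensions $1\to\Bbbk^{*}\to E\to P\to 1$. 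Because $\Bbbk$ is algebraically closed, every element of $\Bbbk^{*}$ admits $n$-th roots for every $n\geq 1$, so $\Bbbk^{*}$ is a divisible abelian group, hence injective as a $\mathbb{Z}$-module. Therefore $\mathrm{Ext}^{1}_{\mathbb{Z}}(P,\Bbbk^{*})=0$, and the required $\lambda$ exists.

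The only substantive step is the cocycle trivialization in the second paragraph; the first paragraph merely assembles the structural hypothesis (commutative graded division algebra over an algebraically closed field with one-dimensional homogeneous components) under which Passman's lemma applies. I expect no essential obstacle, since algebraic closedness of $\Bbbk$ is doing all the work at the cohomological level.
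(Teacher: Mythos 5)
Your proof is correct. The paper gives no argument of its own for this lemma---it simply invokes Lemma~1.2.9(i) of Passman's book \cite{P}---so there is nothing to compare line by line; your proof supplies the content of that citation. The route you take is the standard one: Theorem~\ref{Schur} forces each $D'_\alpha$ to be one-dimensional, so $D$ is a commutative $P$-graded division algebra with $\dim D_\alpha=1$ for $\alpha\in P$, hence a symmetric twisted group algebra $\Bbbk^cP$; and since $\Bbbk$ is algebraically closed, $\Bbbk^*$ is divisible, hence injective over $\mathbb{Z}$, so $\mathrm{Ext}^1_{\mathbb{Z}}(P,\Bbbk^*)=0$ and the symmetric cocycle $c$ is a coboundary, giving $D\cong\Bbbk P$ as $P$-graded algebras.
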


Now, we have   elements  $\Lambda_{\alpha}$,
for $\alpha\in P$,  satisfying
\begin{displaymath}
\Lambda_{\alpha}\Lambda_{\beta}=\Lambda_{\alpha+\beta}\,\,\,
\text{ for all }\,\,\,{\alpha},\beta\in
P.
\end{displaymath}

Let $\boldsymbol{\Lambda}=\{\Lambda_\alpha : \alpha\in P\}$ and 
$V'$ be the span of the set
\begin{displaymath}
\{v_{\beta}-\Lambda_{\alpha}(v_{\beta}):\beta\in Q,v_{\beta}\in
W_{\beta} \text{ and }\alpha\in P\}.
\end{displaymath}
Then $V'$ is a submodule of $W$.

\begin{lemma}\label{Property13} 
The submodule $V'$ of $W$ is a proper submodule of $W$.
\end{lemma}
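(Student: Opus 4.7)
The strategy is to prove $V'\subsetneq W$ by exhibiting a nonzero quotient module $W/V'$. The key observation is that although $V'$ is not $Q$-graded, it does respect the coarser $Q/P$-grading $W=\bigoplus_{\bar\gamma\in Q/P}W_{\bar\gamma}$ with $W_{\bar\gamma}=\bigoplus_{\beta\in P}W_{\gamma+\beta}$, and that each $W_{\bar\gamma}$ carries the structure of a module over $D\cong\Bbbk P$ (by Lemma~\ref{GroupAlg}) via the action $t^\alpha\cdot w:=\Lambda_\alpha(w)$ for $\alpha\in P$.

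First I would check that $V'$ decomposes along this coarser grading. Each generator $v_\beta-\Lambda_\alpha(v_\beta)=(1-\Lambda_\alpha)(v_\beta)$ of $V'$ lies in the single coset $W_{\bar\beta}$ since $\alpha\in P$, so $V'=\bigoplus_{\bar\gamma\in Q/P}V'_{\bar\gamma}$, where
\[
V'_{\bar\gamma}=\sum_{\alpha\in P}(1-\Lambda_\alpha)W_{\bar\gamma}=I\cdot W_{\bar\gamma},
\]
and $I\subset D=\Bbbk P$ denotes the augmentation ideal generated by $\{1-t^\alpha:\alpha\in P\}$.

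Next I would establish that each $W_{\bar\gamma}$ is a \emph{free} $\Bbbk P$-module. Since $\Lambda_\alpha$ is a module automorphism of $W$ of degree $\alpha$, it restricts to a $\Bbbk$-linear bijection $W_\gamma\to W_{\gamma+\alpha}$ for every $\gamma\in Q$; together with $\Lambda_\alpha\Lambda_\beta=\Lambda_{\alpha+\beta}$ and the linear independence of the distinct $\Lambda_\alpha$'s (which is exactly $D\cong\Bbbk P$), it follows that any $\Bbbk$-basis of $W_\gamma$ constitutes a free $\Bbbk P$-basis of $W_{\bar\gamma}$.

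Combining the two steps yields $W_{\bar\gamma}/V'_{\bar\gamma}=W_{\bar\gamma}\otimes_{\Bbbk P}\Bbbk\cong W_\gamma$, where $\Bbbk=\Bbbk P/I$ is the trivial $\Bbbk P$-module. Summing over cosets produces $W/V'\cong\bigoplus_{\bar\gamma\in Q/P}W_\gamma$, which is nonzero because $W\ne 0$; hence $V'\ne W$. The only substantive point is the freeness in the third step, since without it the augmentation quotient might collapse; once we know $D\cong\Bbbk P$ acts by degree-shifting bijections, the rest reduces to a routine augmentation-ideal computation.
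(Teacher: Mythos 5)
Your proof is correct and is essentially the paper's argument recast in module-theoretic language: the basis $\{\boldsymbol{\Lambda}(x^{(k)}_{\beta_j})\,:\,j\in B,\,k\in B_j\}$ that the paper chooses for $W$ is exactly a $\Bbbk P$-free basis (with $t^\alpha$ acting by $\Lambda_\alpha$), and the functional $\sigma$ sending every basis vector to $1$ is the augmentation map, so $\sigma$ is onto and kills $V'=I\cdot W$ for the same reason your quotient $W_{\bar\gamma}\otimes_{\Bbbk P}\Bbbk\cong W_\gamma$ is nonzero. One small wording issue: the justification of freeness should invoke not the linear independence of the $\Lambda_\alpha$ but the fact that $W_{\bar\gamma}=\bigoplus_{\alpha\in P}W_{\gamma+\alpha}$ with each $\Lambda_\alpha\colon W_\gamma\to W_{\gamma+\alpha}$ bijective, which is what makes $\{\Lambda_\alpha(v_i)\}_{\alpha\in P,\,i}$ a $\Bbbk$-basis of $W_{\bar\gamma}$ whenever $\{v_i\}$ is one of $W_\gamma$.
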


\begin{proof}
Let $\{\beta_j : j \in B\}$ be a set of representatives  for cosets
in $Q/P$, where $B$ is an index set. Let $\{x_{\beta_j}^{(k)}: k\in
B_j\}$ be a basis of $W_{\beta_j}$, where $B_j$ is an
index set. Then the set
\begin{displaymath}
\{\boldsymbol{\Lambda}(x^{(k)}_{\beta_j})\,:\, j\in B, k\in B_j\}
\end{displaymath}
is a basis for $W$. For $j\in
B$ and $k\in B_j$, set
\begin{displaymath}
S_{jk}:=\{x^{(k)}_{\beta_j}- \Lambda(x^{(k)}_{\beta_j})\,:\,
\Lambda\in \boldsymbol{\Lambda}\}
\end{displaymath}
and
\begin{displaymath}
S=\bigcup_{j\in B}\bigcup_{k\in B_j} S_{jk}.
\end{displaymath}
Comparing supports of involved elements, it is easy to see that the sum
\begin{displaymath}
\mathcal{I}=\sum_{j\in B}\sum_{k\in B_j}\mathrm{span}(S_{jk})
\end{displaymath}
is direct. Define the linear map
\begin{equation}\label{eqapr21}
\sigma:W\to \Bbbk
\text{ such that }\Lambda(x^{(k)}_{\beta_j})\mapsto 1\text{ for all }
\Lambda\in \boldsymbol{\Lambda}.
\end{equation}
Clearly $\sigma$ is onto and $\sigma(V')=0$.  Therefore
$V'\neq W$ and thus $V'$ is a proper
submodule of $W$.  By construction, this submodule  is pure of size two.
\end{proof}

\begin{lemma}\label{Property26}
The submodule $V'$ is a  proper maximal
submodule of $W$.
\end{lemma}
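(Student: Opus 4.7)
The plan is to argue by contradiction: suppose there is a submodule $X$ with $V' \subsetneq X \subsetneq W$ and derive a contradiction using the maximality of $D$ inside $D'$ together with the graded simplicity of $W$. First, the $Q$-graded submodule $X_h \subseteq W$ generated by the $Q$-homogeneous elements of $X$ is either zero or all of $W$ by graded simplicity; the latter would give $X = W$, so $X_h = 0$ and $X$ contains no nonzero $Q$-homogeneous element. Combined with the fact that $V' \subseteq X$ is pure of size two (the concluding line of the proof of Lemma~\ref{Property13}), this forces $\mathrm{size}(X) = 2$.

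Next I would pick a size-two element $u \in X$, apply the purification construction used to define $\Lambda_{N,\alpha_i}$ in~\eqref{Lam}, and invoke graded Schur's Lemma (Theorem~\ref{Schur}) to write $u = u_\beta + c\,\Lambda_\gamma(u_\beta)$ for some $\gamma \in P'\setminus\{0\}$ and some $c \in \Bbbk^\times$; the orbit argument that made $\Lambda_{N,\alpha_i}$ well defined on all of $W$ then promotes this to $w + c\,\Lambda_\gamma(w) \in X$ for every homogeneous $w \in W$. The main case analysis is on whether $\gamma \in P$. If $\gamma \in P$, then $w - \Lambda_\gamma(w) \in V' \subseteq X$ combined with $w + c\,\Lambda_\gamma(w) \in X$ yields $(c+1)\Lambda_\gamma(w) \in X$ for all $w$; the case $c \ne -1$ puts the homogeneous subspace $W_{\beta+\gamma}$ inside $X$, contradicting $X_h = 0$. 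If $\gamma \in P'\setminus P$, then $\Lambda_\alpha \equiv \mathrm{id} \pmod{V'}$ for every $\alpha \in P$ gives $\Lambda_\alpha(X) = X$; writing $\Lambda_\gamma\Lambda_\alpha = \chi(\alpha)\Lambda_\alpha\Lambda_\gamma$ for the character $\chi \colon P \to \Bbbk^\times$ defined by the commutation relation, applying $\Lambda_\alpha$ to $w + c\,\Lambda_\gamma(w)\in X$ and substituting $w \mapsto \Lambda_\alpha(w)$ in the same relation and subtracting produces
\begin{displaymath}
c\bigl(1-\chi(\alpha)\bigr)\Lambda_\alpha\Lambda_\gamma(w) \in X \qquad \text{for every } w \in W.
\end{displaymath}
A trivial $\chi$ would make $\Lambda_\gamma$ centralize $D$, contradicting the maximality of $D$ as a $Q$-graded commutative subalgebra of $D'$; hence some $\alpha_0 \in P$ satisfies $\chi(\alpha_0) \ne 1$, and the displayed relation exhibits a nonzero $Q$-homogeneous element of $X$, again contradicting $X_h = 0$.

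The hardest part is the residual possibility $\gamma \in P$ with $c = -1$, where the chosen size-two witness has been absorbed into $V'$. To handle it, I would use the canonical linear isomorphism $X \cap \bigoplus_j W_{\beta_j} \cong \bar X = X/V'$ induced by projection (well-defined since $V' \cap \bigoplus_j W_{\beta_j} = 0$ by the proof of Lemma~\ref{Property13}); since $\bar X \ne 0$, this produces a nonzero $u' \in X$ whose support is a set of distinct $Q/P$-coset representatives, so that every nontrivial pairwise shift lies outside $P$. Applying the commutation trick from the second case to pure submodules of $U(\mathfrak{g})u'$ and iterating strips shifts one at a time; crucially, distinct shifts $\gamma_i \notin P$ induce pairwise distinct characters $\chi_i$ (otherwise $\Lambda_{\gamma_i-\gamma_j}$ would centralize $D$ and force $\gamma_i-\gamma_j \in P$, contradiction), and linear independence of distinct characters eventually exposes either a nonzero homogeneous element of $X$ or a size-two element of $X$ with shift outside $P$, at which point the second case closes the argument.
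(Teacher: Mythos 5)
Your Cases 1 and 2 (a size-two element $u=u_\beta+c\Lambda_\gamma(u_\beta)$ of $X$ with $\gamma\in P,\,c\neq-1$, respectively $\gamma\in P'\setminus P$) are correct, and the key computation in Case 2---comparing $\Lambda_\alpha\circ(\mathrm{id}+c\Lambda_\gamma)$ with $(\mathrm{id}+c\Lambda_\gamma)\circ\Lambda_\alpha$ and invoking maximality of $D$---is exactly the commutation argument the paper uses. Your observation that $X\cap\bigoplus_j W_{\beta_j}$ projects isomorphically onto $X/V'$ is also correct, since $W=\bigl(\bigoplus_j W_{\beta_j}\bigr)\oplus V'$ and $V'\subseteq X$. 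So far so good.

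The gap is in the residual case, and it is a genuine one. Producing $u'\in X\cap\bigoplus_j W_{\beta_j}$ does give you an element whose support lies in distinct $Q/P$-coset representatives, so its pairwise shifts lie outside $P$. But to run the commutation trick you then need to define the maps $\Lambda_{\gamma_i}$ from a \emph{pure} submodule, and you propose to purify $U(\mathfrak{g})u'$ for that. The problem: elements of $U(\mathfrak{g})u'$ are of the form $y_\mu\cdot u'$, which live in $\bigoplus_j W_{\beta_j+\mu}$ and \emph{not} in $\bigoplus_j W_{\beta_j}$; a minimal-support element of $U(\mathfrak{g})u'$ can therefore have support with pairwise shifts that do land in $P$. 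Your sketch ("iterating strips shifts one at a time; \dots linear independence of distinct characters eventually exposes \dots") asserts exactly the property that fails to be preserved under this passage, and no mechanism is given for restoring it. The paper avoids this by choosing, once and for all, an element $x$ of minimal support in $V''\setminus V'$ (your $X\setminus V'$), and then proving directly---via the "subtract an element of $V'$" move, which uses the internal structure of $V'$---that neither any $\gamma_i$ nor any $\gamma_j-\gamma_i$ can be in $P$, and moreover that the resulting $V_1$ is pure of size $r+1$. That single minimality choice does all the work that your iteration is hand-waving towards. A second, smaller gap: you never justify that the operators $\Lambda_{\gamma_i}$ exist (i.e.\ that $\gamma_i\in P'$) for the support shifts of your $u'$; this again hinges on a purity statement you have not established. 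In short, the ingredients are right but Case~3 needs the paper's minimal-support-plus-subtraction argument (or an equivalent) to close, and as written it does not close.
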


\begin{proof} Clearly, $V'$ is a pure submodule of $W$  of size $2$.
Assume that $V'$ is not maximal.
Then there is a proper submodule $V''$ of $W$ which properly
contains $V'$. This means that there exist different elements
$\beta_0,\beta_0+\gamma_1,\beta_0+\gamma_2,\dots, \beta_0+\gamma_r$ in $Q$ and non-zero
elements
\begin{displaymath}
x_{\beta_0}\in \mathfrak{g}_{\beta_0}, x_{\beta_0+\gamma_1}\in
\mathfrak{g}_{\beta_0+\gamma_1}, x_{\beta_0+\gamma_2}\in
\mathfrak{g}_{\beta_0+\gamma_2},\dots, x_{\beta_0+\gamma_r}\in
\mathfrak{g}_{\beta_0+\gamma_r}
\end{displaymath}
such that
\begin{equation}\label{eqnn75}
x:=x_{\beta_0}+x_{\beta_0+\gamma_1}+x_{\beta_0+\gamma_2}+\dots+x_{\beta_0+\gamma_r}\in
V''\setminus
V'.
\end{equation}
Assume that $r$ is minimal possible for elements in $V''\setminus V'$. 
Note that $r>0$ as $V''\neq W$.
If any $\gamma_i$ or any difference $\gamma_j-\gamma_i$ for $i\neq j$ is in $P$,
then we can use definition of $V''\setminus V'$ and subtract from $x$ an element in
$V'$ getting a new element in $V''\setminus V'$ with
strictly smaller support. Therefore neither $\gamma_i$ nor any difference
$\gamma_j-\gamma_i$ for $i\neq j$ is in $P$. In particular, it follows that
$V''\setminus V'$ does not contain any element whose support would be a proper
subset of the support of $x$.
Note that any partial sum of $x$ in \eqref{eqnn75}
is not in $V''\setminus V'$ for otherwise either this sum or its complement to
$x$ would be an element in $V''\setminus V'$ with strictly smaller support.

Now we fix all these $\gamma_i$. 
Next we claim that, for a fixed $x_\beta\in W_{\beta}$
for which an element $x$ of the form \eqref{eqnn75} in $V''\setminus V'$
exists (replacing $\beta_0$ with $\beta$), the elements
\begin{displaymath}
x_{\beta+\gamma_1}\in
\mathfrak{g}_{\beta+\gamma_1}, x_{\beta+\gamma_2}\in
\mathfrak{g}_{\beta+\gamma_2},\dots, x_{\beta+\gamma_r}\in
\mathfrak{g}_{\beta+\gamma_r}
\end{displaymath}
leading to such $x$ are uniquely determined. Indeed, otherwise the non-zero
difference between two such
elements of the form \eqref{eqnn75} would have a strictly smaller support and hence
would belong to $V'$ because of the minimality of $r$.
This is, however, impossible by the previous paragraph.

By the above arguments,  we have the
$\mathfrak{g}$-module isomorphisms $\Lambda_{\gamma_i}: W\to W$
mapping $x_{\beta}$ to $x_{\beta+\gamma_i}$ defined as in  \eqref{Lam}.
We denote by $V_1$ the span of all elements of  the form \eqref{eqnn75}
where $\beta\in Q$ is arbitrary. From this construction and the above properties
it follows that $V_1$ is a pure submodule of size $r+1$ that is contained in $V''$.

Now each $x$ of the form \eqref{eqnn75} can be uniquely written as
\begin{displaymath}
x=x_\beta+\Lambda_{\gamma_1}(x_{\beta})+\dots+\Lambda_{\gamma_r}(x_{\beta})
\end{displaymath}
and all $\gamma_i$ are in $P'$ defined before Lemma \ref{Property11}.
For any ${\alpha}\in P$,  we have $x-\Lambda_{\alpha}(x)\in V'$, 
$\Lambda_{\alpha}(x)=x-(x-\Lambda_{\alpha}(x))\in V''\setminus  V'$ and hence
\begin{displaymath}
\Lambda_{\alpha}(x) =\Lambda_{\alpha}(x_\beta)+
\Lambda_{\alpha}\Lambda_{\gamma_1}(x_{\beta})+\dots
+\Lambda_{\alpha}\Lambda_{\gamma_2}(x_{\beta})\in
V_1.
\end{displaymath}
At the same time, we have
\begin{displaymath}
y=\Lambda_{\alpha}(x_\beta)+\Lambda_{\gamma_1}\Lambda_{\alpha}(x_{\beta})+
\dots+\Lambda_{\gamma_r}\Lambda_{\alpha}(x_{\beta})\in
V_1,
\end{displaymath}
as this element has the form \eqref{eqnn75} with $x_\beta$ replaced by
$\Lambda_{\alpha}(x_\beta)$.

We have $\Lambda_{\alpha}(x)-y\in V_1$. If we would have $\Lambda_{\alpha}(x)-y\neq 0$,
then $\Lambda_{\alpha_j}(x)-y\not\in V_1$ since $V_1$
does not contain elements with such support. Hence $\Lambda_{\alpha}(x)-y=0$, i.e., 
$\Lambda_{\alpha}\Lambda_{\gamma_i}=\Lambda_{\gamma_i}\Lambda_{\alpha}$ for all
$\alpha\in P$ and all $i=1,2,\dots,n$. Since $D$ is the maximal graded 
commutative subalgebra of $D'$, thus at least one $\gamma_i\in P$, a contradiction.
The claim follows.
\end{proof}

%\begin{lemma}\label{Property26}
%The submodule $V'$ is a  proper maximal
%submodule of $W$.
%\end{lemma}
%
%\begin{proof}
%Mutatis mutandis the proof of Lemmata~\ref{Property13}, \ref{Property13'}.
%\end{proof}

Now we have the following:

\begin{theorem}\label{Mod2}
Let $Q$ be an
 additive abelian group and
$\mathfrak{g}$ be a $Q$-graded Lie algebra over  an algebraically closed field $\Bbbk$. Let
$W$ be a graded simple $\mathfrak{g}$-module such that
$\dim(W)<|\Bbbk|$. Then there is a subgroup $P\subset Q$ and a
simple ${\mathfrak{g}}$-module $V$ with a   $Q/P$-grading such
that  $W$ is $Q$-graded isomorphic to  $M(Q,P,V)$.
\end{theorem}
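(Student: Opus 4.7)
The plan is to take $V := W/V'$, where $V'$ is the proper maximal submodule of $W$ produced by Lemma~\ref{Property26}, and to realize the desired graded isomorphism explicitly via the canonical projection $\pi : W \twoheadrightarrow V$. (If $W$ happens to be simple as an ungraded $\mathfrak{g}$-module, then $P = \{0\}$ and $V = W$ trivially works, so I assume throughout that $W$ is not simple.) The first step is to transfer the $Q$-grading of $W$ into a $Q/P$-grading on $V$. By the very definition of $V'$, for every $\alpha \in P$ and every $w \in W$ one has $w - \Lambda_\alpha(w) \in V'$, so $\Lambda_\alpha$ descends to the identity on $V$. Consequently, for $\beta \in Q$ and $\gamma \in P$, $\pi(W_{\beta+\gamma}) = \pi(\Lambda_\gamma(W_\beta)) = \pi(W_\beta)$, and the subspace
\[
V_{\bar\alpha} := \pi(W_\alpha) \qquad (\bar\alpha \in Q/P)
\]
is well-defined; the inclusion $\mathfrak{g}_\beta V_{\bar\alpha} \subset V_{\overline{\alpha+\beta}}$ is immediate. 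To confirm $V = \bigoplus_{\bar\alpha \in Q/P} V_{\bar\alpha}$, I would reuse the basis $\{\Lambda_\gamma(x^{(k)}_{\beta_j})\}$ of $W$ from the proof of Lemma~\ref{Property13} and introduce the linear functionals $\sigma_{j,k} : W \to \Bbbk$ given by $\sigma_{j,k}(\Lambda_\gamma(x^{(k)}_{\beta_j})) = 1$ for every $\gamma \in P$ and $\sigma_{j,k} = 0$ on all other basis vectors. Each $\sigma_{j,k}$ is constant on $\Lambda_\alpha$-orbits and so vanishes on $V'$, hence descends to $V$; the family $\{\sigma_{j,k}\}$ then exhibits the (obviously spanning) set $\{\pi(x^{(k)}_{\beta_j})\}$ as a basis of $V$ compatible with the decomposition. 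Simplicity of $V$ as a $\mathfrak{g}$-module is immediate from the maximality of $V'$.

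Next, I would define the candidate graded map
\[
\varphi : W \longrightarrow M(Q,P,V), \qquad w_\alpha \longmapsto \pi(w_\alpha) \otimes t^\alpha,
\]
extended by linearity. By construction $\varphi$ is homogeneous of degree zero, and its $\mathfrak{g}$-linearity follows from $\pi$ being a $\mathfrak{g}$-module homomorphism together with the definition of the $\mathfrak{g}$-action on $M(Q,P,V)$. Surjectivity on each homogeneous component is immediate from $V_{\bar\alpha} = \pi(W_\alpha)$. For injectivity, $\ker\varphi = \bigoplus_{\alpha \in Q} (W_\alpha \cap V')$ is visibly a $Q$-graded submodule of $W$; graded simplicity of $W$ forces it to be $0$ or $W$, and the second alternative is ruled out by $V' \neq W$.

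The main conceptual obstacle will be the $Q/P$-grading step: one must simultaneously verify that the $P$-cosets give a well-defined and non-degenerate decomposition of $V$, which is exactly what the $\Lambda_\alpha$-invariance of the functionals $\sigma_{j,k}$ achieves. Once this is secured, the remaining verifications are a formal assembly, and the resulting $\varphi$ is the required $Q$-graded isomorphism $W \cong M(Q,P,V)$.
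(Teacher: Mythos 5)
Your proof matches the paper's: both take $V=W/V'$ with $V'$ the maximal proper submodule from Lemma~\ref{Property26}, and check that the canonical map $w_\alpha\mapsto\pi(w_\alpha)\otimes t^\alpha$ is a degree-zero $Q$-graded $\mathfrak{g}$-module isomorphism. The paper compresses this to ``easy to verify''; you correctly supply the details (the $\Lambda_\alpha$-invariance giving a well-defined $Q/P$-grading $V_{\bar\alpha}=\pi(W_\alpha)$, directness of the decomposition via the functionals $\sigma_{j,k}$, surjectivity by construction, and injectivity from graded simplicity of $W$ since $\ker\varphi$ is a graded submodule).
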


\begin{proof}
From Lemma \ref{Property26} we have that the module $V=W/V'$
is a simple  $\mathfrak{g}$-module with a $Q/P$-grading. It is easy
to verify  that the $Q$-graded canonical map
\begin{displaymath}
\begin{array}{rcl}
W&\to & M(Q,P,V),\\
v_{\alpha}&\mapsto &(v_{\alpha}+V')\otimes t^{\alpha}
\end{array}
\end{displaymath}
is a degree $0$ injective homomorphism of $\mathfrak{g}$-modules. Thus $W$ is $Q$-graded isomorphic to  $M(Q,P,V)$.
\end{proof}

We note that a special case of Theorem~\ref{Mod2} was obtained in \cite{EK}
with a totally different approach. We remark that neither $P$ nor $V$ in
Theorem~\ref{Mod2} are uniquely determined. Further,
$M(Q,P,V)$ might have non-trivial graded automorphisms, see 
Lemma~\ref{Property134'} below.

Now we want to consider graded isomorphisms between $\mathfrak{g}$-modules
of the form $M(Q,P,V)$. By construction, we have $M(Q,P,V)^f= M(Q,P,V^f)$
for any $f\in \hat Q$. Note that $V$ and $V^f$ are not isomorphic as
$\mathfrak{g}$-modules in general. Our next observation is the following.

\begin{lemma}\label{lem0617}
In the situation above,  there is a degree zero isomorphism of graded
$\mathfrak{g}$-modules  $M(Q,P,V)$ and $M(Q,P,V)^f$.
\end{lemma}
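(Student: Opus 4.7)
The plan is to observe that this lemma is essentially an immediate application of the general fact, recorded just before the subsection on classification, that for any $Q$-graded $\mathfrak{g}$-module $W$ and any $f \in \hat Q$, the ``twisting'' map
\begin{displaymath}
\tau_f : W \to W^f, \qquad w_\alpha \mapsto f(\alpha) w_\alpha \quad (w_\alpha \in W_\alpha),
\end{displaymath}
is a degree zero graded $\mathfrak{g}$-module isomorphism. I would therefore simply specialize this statement to $W := M(Q,P,V)$ and verify that under the identification with $M(Q,P,V^f)$, the map $\tau_f$ takes the desired form.

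Concretely, I would first recall that $M(Q,P,V)^f = M(Q,P,V^f)$, which is the observation made in the paragraph preceding the lemma and follows by comparing the two actions on homogeneous elements: on the one hand, the action on $M(Q,P,V)^f$ sends $x_\beta \circ (v \otimes t^\alpha)$ to $f(\beta) (x_\beta v)\otimes t^{\alpha+\beta}$; on the other hand, the action on $M(Q,P,V^f)$ uses $x_\beta \cdot_{V^f} v = f(\beta) x_\beta v$, yielding the same expression. Thus the two modules coincide not just abstractly but as $Q$-graded $\mathfrak{g}$-modules.

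Next, I would apply $\tau_f$ to $M(Q,P,V)$. Explicitly, this is the degree zero linear map
\begin{displaymath}
\tau_f : M(Q,P,V) \to M(Q,P,V)^f = M(Q,P,V^f), \qquad v_{\bar\alpha} \otimes t^\alpha \mapsto f(\alpha)\, v_{\bar\alpha} \otimes t^\alpha .
\end{displaymath}
It is bijective because $f(\alpha) \in \Bbbk^{*}$ for every $\alpha \in Q$, and it is degree zero by construction. The $\mathfrak{g}$-intertwining property is the short verification already carried out in the generality of the preceding paragraph: applying $\tau_f$ to $x_\beta \cdot (v_{\bar\alpha}\otimes t^\alpha) = (x_\beta v_{\bar\alpha}) \otimes t^{\alpha+\beta}$ yields $f(\alpha+\beta)(x_\beta v_{\bar\alpha}) \otimes t^{\alpha+\beta}$, which equals $x_\beta \circ \tau_f(v_{\bar\alpha}\otimes t^\alpha) = f(\beta)f(\alpha)(x_\beta v_{\bar\alpha})\otimes t^{\alpha+\beta}$ since $f$ is a character.

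There is no serious obstacle here; the lemma is a direct instance of the general twisting principle. The only subtle point is to be precise about the identification $M(Q,P,V)^f = M(Q,P,V^f)$, which is purely formal but deserves a one-line check so that the conclusion is correctly phrased as an isomorphism between two loop modules of the same shape.
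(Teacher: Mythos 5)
Your proposal is correct and follows essentially the same approach as the paper. The paper's proof writes down the inverse map $\Psi: M(Q,P,V)^f \to M(Q,P,V)$, $v \mapsto \frac{1}{f(\alpha)}v$ for $v$ of degree $\alpha$, and verifies the intertwining property directly; you instead invoke the general twisting fact $\tau_f: W \to W^f$ already recorded in the paper and specialize to $W = M(Q,P,V)$, which gives the same isomorphism in the opposite direction.
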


\begin{proof}
Define $\Psi:M(Q,P,V)^f\to M(Q,P,V)$ by sending $v\mapsto \frac{1}{f(\alpha)}v$ for any
$\alpha\in Q$ and $v\in M(Q,P,V)_{\alpha}$. Then, using the definitions,
for all $\alpha,\beta \in Q$, $x_{\beta}\in \mathfrak{g}_{\beta}$ and
$v\in M(Q,P,V)_{\alpha}$, we have
\begin{displaymath}
\Psi(x_{\beta}\circ v) =\Psi(f(\beta)x_{\beta} v)=\frac{f(\beta)}{f(\alpha+\beta)}x_{\beta} v=
\frac{1}{f(\alpha)}x_{\beta} v=x_{\beta}\Psi(v).
\end{displaymath}
As $\Psi$ is obviously bijective, the claim follows.
\end{proof}

\begin{lemma}\label{Property134'}
Let $\mathfrak{g}$ be a $Q$-graded Lie algebra and $P$ subgroup of $Q$.
Let, further, $V$  be a simple $\mathfrak{g}$-module of dimension
smaller than $|\Bbbk|$ with a nonextendable  grading over $Q/P$.
Assume $\alpha\in Q$. Then there is  a degree $\alpha$ graded automorphism
$\tau: M(Q,P,V) \to M(Q,P,V)$ if and only if there is  $f\in\hat{Q}$
and  a degree $\bar\alpha\in Q/P$
isomorphism $\mu: V^f\to V$ of $Q/P$-graded $\mathfrak{g}$-modules.
\end{lemma}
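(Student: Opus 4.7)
The plan is as follows. For the easy direction $(\Leftarrow)$, I will simply write down the automorphism. Given $f\in\hat Q$ and a $Q/P$-graded $\mathfrak{g}$-module isomorphism $\mu:V^f\to V$ of degree $\bar\alpha$, define
\[
\tau(v\otimes t^{\beta}):=f(\beta)\,\mu(v)\otimes t^{\alpha+\beta}\quad\text{for } v\in V_{\bar\beta},\ \beta\in Q.
\]
This is a bijective linear map of degree $\alpha$, and the $\mathfrak{g}$-linearity condition reduces, after a direct calculation on homogeneous elements, to the identity $f(\gamma)\mu(xv)=x\mu(v)$ for $x\in\mathfrak{g}_\gamma$, which is exactly the defining property of $\mu$ as a map $V^f\to V$ of $\mathfrak{g}$-modules.

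For the harder direction $(\Rightarrow)$, let $\tau$ be a degree $\alpha$ graded automorphism of $M(Q,P,V)$ and write $\tau(v\otimes t^\beta)=\mu_\beta(v)\otimes t^{\alpha+\beta}$ for linear maps $\mu_\beta:V_{\bar\beta}\to V_{\overline{\alpha+\beta}}$. The $\mathfrak{g}$-linearity of $\tau$ translates into the family of relations $\mu_{\gamma+\beta}(xv)=x\mu_\beta(v)$ for all $x\in\mathfrak{g}_\gamma$ and $v\in V_{\bar\beta}$. Since $V$ has a nonextendable grading, Lemma~\ref{Property21} guarantees that $M(Q,P,V)$ is $Q$-graded simple, so the graded Schur's Lemma (Theorem~\ref{Schur}) applies: degree $\alpha$ graded automorphisms of $M(Q,P,V)$ form a single $\Bbbk^{*}$-orbit.

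The key trick is to compare $\tau$ with its conjugates by the canonical shift automorphisms. For each $p\in P$, the map $T_p:v\otimes t^\beta\mapsto v\otimes t^{\beta+p}$ is a degree $p$ graded automorphism of $M(Q,P,V)$, and the conjugate $T_p\tau T_p^{-1}$ is again of degree $\alpha$; by graded Schur, $T_p\tau T_p^{-1}=c(p)\tau$ for a unique scalar $c(p)\in\Bbbk^{*}$. An easy composition argument shows $c:P\to\Bbbk^{*}$ is a group homomorphism, and unpacking the relation yields $\mu_{\beta+p}(v)=f(p)\mu_\beta(v)$, where I set $f(p):=c(p)^{-1}$. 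Since $\Bbbk$ is algebraically closed, $\Bbbk^{*}$ is divisible and hence injective among abelian groups, so $f$ extends from $P$ to a character of the whole group $Q$, which I still denote by $f\in\hat Q$.

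Finally, I can define $\mu:V\to V$ by choosing any lift $\beta_{0}\in\beta+P$ for each $\bar\beta\in Q/P$ and setting $\mu(v):=f(\beta_{0})^{-1}\mu_{\beta_{0}}(v)$ for $v\in V_{\bar\beta}$. The relation $\mu_{\beta+p}=f(p)\mu_\beta$ makes this independent of the lift, and the original identity $\mu_{\gamma+\beta}(xv)=x\mu_\beta(v)$ together with multiplicativity of $f$ unwinds to $f(\gamma)\mu(xv)=x\mu(v)$, exactly saying that $\mu:V^f\to V$ is a $\mathfrak{g}$-module homomorphism of degree $\bar\alpha$. Nonvanishing of $\mu$ (inherited from the bijectivity of $\tau$) together with simplicity of $V^f$ and $V$ forces $\mu$ to be an isomorphism. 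The main obstacle is the bookkeeping with the $P$-coset dependence of the $\mu_\beta$: the shift automorphisms $T_p$ combined with graded Schur are what cleanly decouple the family $\{\mu_\beta\}$ into a single twist $f\in\hat Q$ and a single map $\mu$, and the extension of $f$ from $P$ to $Q$ relies essentially on the algebraic closedness of $\Bbbk$.
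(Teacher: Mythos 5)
Your proof is correct and follows essentially the same route as the paper: both directions use the same formula, and in the hard direction both arguments use the degree-$p$ shift automorphisms of $M(Q,P,V)$ together with the graded Schur's Lemma to produce the multiplicative cocycle on $P$, extend it to $f\in\hat Q$ by divisibility of $\Bbbk^*$, and then twist the $\mu_\beta$ into a single map $\mu:V^f\to V$. The only cosmetic difference is that you conjugate $\tau$ by the shifts and get $T_p\tau T_p^{-1}=c(p)\tau$, whereas the paper conjugates the shifts by $\tau$ and then passes through the auxiliary isomorphism $\Psi$ of Lemma~\ref{lem0617} to assemble $\mu$; the resulting formula $\mu(v_{\bar\beta})=f(\beta)^{-1}\mu_\beta(v_{\bar\beta})$ is identical.
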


\begin{proof}
To prove necessity, write $\tau$ in the form
\begin{displaymath}
 \begin{array}{rcl}\tau:
M(Q,P,V)& \to& M(Q,P,V),\\
v_{\bar\beta}\otimes t^\beta &\mapsto& \mu_{\beta}(v_{\bar\beta})\otimes t^{\beta+\alpha},
\end{array}
\end{displaymath}
where $\beta\in Q$ and $\mu_{\beta}:V_{\bar\beta}\to V_{\bar\beta+\bar\alpha}$ is a
linear isomorphism of vector spaces. For any ${\kappa}\in P$, the map
$\tau\circ \Lambda_{\kappa}\circ\tau^{-1}: M(Q,P,V) \to M(Q,P,V))$
is a  $\mathfrak{g}$-module
automorphism of degree ${\kappa}$. By Graded Schur's Lemma,
there exists $a_{\kappa}\in \mathbb{C}^*$ such that
$\tau\circ \Lambda_{\kappa}\circ\tau^{-1}=a_{\kappa} \Lambda_{\kappa}$,
that is, $\tau\circ \Lambda_{\kappa}=a_{\kappa} \Lambda_{\kappa}\circ\tau$.
This implies that $\mu_{\beta+{\kappa}}=a_{\kappa}\mu_\beta$ for all $\beta\in Q$ and ${\kappa}\in P$
(note that $a_{\kappa}$ does not depend on $\beta$).

Since $\mathbb{C}^*$ is a divisible and hence injective abelian group,
we can extend the map $\kappa\to a_{\kappa}$ to a character $f$ of $Q$.
Consider the isomorphism $\Psi$ constructed in the previous lemma.
We have the graded $\mathfrak{g}$-module isomorphism
\begin{displaymath}
\begin{array}{rcl}\tau\Psi:M(Q,P,V^f)& \to& M(Q,P,V),\\
v_{\bar\beta}\otimes t^\beta &\mapsto& f(-\beta)
\mu_{\beta}(v_{\bar\beta})\otimes t^{\beta+\alpha}.
\end{array}
\end{displaymath}
From the definitions it follows that for all $\beta\in Q$ and $\kappa\in P$ we have
\begin{displaymath}
f(-\beta-\kappa)\mu_{\beta+{\kappa}}=f(-\beta)\mu_\beta.
\end{displaymath}
Then we have a vector space automorphism $\mu: V^f\to V$ of degree $\bar\alpha$ given by $\mu(v_{\bar\beta})=f(-\beta)\mu_{\beta}(v_{\bar\beta})$, for every
$v_{\bar\beta}\in V_{\bar\beta} \ (=V^f_{\bar\beta})$.
For any $x_{\gamma}\in\mathfrak{g}_{\gamma}$ and $v_{\bar{\beta}}\in V_{\bar{\beta}}$, we have
\begin{displaymath}
\begin{array}{rcl}
\mu(x_{{\gamma}}\circ
v_{\bar{\beta}} )\otimes
t^{\gamma+\beta+\alpha}&=&\tau\Psi(x_{\gamma}\circ
 v_{\overline{\beta}}\otimes t^{\beta+\gamma})\\ &=&\tau\Psi(x_{\gamma}\circ(
 v_{{\bar\beta}}\otimes t^{\beta}))\\
&=&x_{{\gamma}}\tau\Psi( v_{{\bar\beta}}\otimes t^{\beta})\\
&=&x_{{\gamma}}( \mu( v_{\bar\beta})\otimes t^{\beta+\alpha})
\\ &=&x_{{\gamma}} \mu( v_{\bar\beta})\otimes
t^{\gamma+\beta+\alpha}.
\end{array}
\end{displaymath}
Therefore $\mu(x_{{\gamma}}\circ v_{\bar\beta})=x_{{\gamma}} \mu( v_{\bar\beta})$,
and thus $\mu$ is an isomorphism of $Q/P$-graded $\mathfrak{g}$-modules
and it has degree $\bar\alpha\in Q/P$ by construction.

Now let us prove sufficiency.
Suppose $f\in\hat Q$ and  $\mu: V^f \to V$ is a degree $\bar\alpha\in Q/P$
isomorphism of $Q/P$-graded $\mathfrak{g}$-modules. Then
\begin{displaymath}
f(\gamma)\mu(x_{{\gamma}}  v_{\bar\beta})=\mu(x_{{\gamma}}\circ  v_{\bar\beta})=
x_{{\gamma}} \mu( v_{\bar\beta})
\end{displaymath}
and this implies that
\begin{displaymath}
 \begin{array}{rcl}\tau:
M(Q,P,V)& \to& M(Q,P,V),\\
v_{\bar\beta}\otimes t^\beta &\mapsto& f({\beta})\mu(v_{\bar\beta})\otimes t^{\beta+\alpha},
\end{array}
\end{displaymath}
where $\beta\in Q$ and $v_{\bar\beta}\in V_{\bar\beta}$, is a degree $\alpha$  automorphism
of the $Q$-graded $\mathfrak{g}$-module $M(Q,P,V)$.
\end{proof}

The following example, suggested by A.~Elduque and M.~Kochetov,
shows that there are different choices for $P$ in our Theorem~\ref{Mod2}.

\begin{example}\label{exlast}
{\rm Consider the Lie algebra $\mathfrak{sl}_2(\mathbb{C})$ with the standard basis
\begin{displaymath}
h=\left(\begin{matrix}1& 0\\ 0& -1 \end{matrix}\right),
\ e=\left(\begin{matrix}0& 1\\ 0& 0 \end{matrix}\right),
\ f=\left(\begin{matrix}0& 0\\ 1& 0 \end{matrix}\right).
\end{displaymath}
Let  $Q=\mathbb{Z}_2 \times \mathbb{Z}_2$. Define on $\mathfrak{sl}_2$
the following $Q$-grading: the element $h$ has degree $(1,0)$,
the element $e+f$ has degree $(0,1)$ and the element $e-f$ has degree $(1,1)$.
This grading can be extended to the algebra of all $2\times 2$ matrices by setting
the degree of the identity matrix to be $(0,0)$.
Let $W$ be the vector space of all  $2\times 2$ matrices considered as an
$\mathfrak{sl}_2$-module via left multiplication. Then $W$ becomes a
$Q$-graded simple $\mathfrak{sl}_2$-module. Now one can check that
$W=M(Q,P, V)$ for the following three choices of $P$ and a fine grading on the
natural $\mathfrak{sl}_2$-module $V=\mathbb{C}^2$:
\begin{gather*}
P=\{0\}\times \mathbb{Z}_2,\quad  \deg\left(\begin{matrix}1\\ 1
\end{matrix}\right)=(0,0) \text{ and } \deg\left(\begin{matrix}1\\ -1
\end{matrix}\right)=(1,0),\\
P= \mathbb{Z}_2\times\{0\},\quad \deg\left(\begin{matrix}1\\ 0
\end{matrix}\right)=(0,0) \text{ and } \deg\left(\begin{matrix}0\\ 1
\end{matrix}\right)=(0,1),\\
P=\{(0,0), (1,1)\},\quad \deg\left(\begin{matrix}\mathbf{i}
\\ 1  \end{matrix}\right)=(0,0) \text{ and } \deg\left(\begin{matrix}1 \\
\mathbf{i}  \end{matrix}\right)=(1,0),
\end{gather*}
where $\mathbf{i}^2=-1$ and $(1,0)=(0,1)$ in $Q/P$ in the last case.
}
\end{example}

Here is an interesting example with different flavor in the case when
$\mathfrak{g}$ is not simple.

\begin{example}\label{exlast2}
{\rm
Consider $\mathfrak{g}=\mathfrak{sl}_2(\mathbb{C})\oplus \mathfrak{sl}_2(\mathbb{C})$ with the natural
$\mathbb{Z}_2$-grading given by
\begin{displaymath}
\mathfrak{g}_{\overline{0}}=\{(x,x)\,\vert\, x\in  \mathfrak{sl}_2(\mathbb{C})\} \quad\text{ and }\quad
\mathfrak{g}_{\overline{1}}=\{(x,-x)\,\vert\, x\in  \mathfrak{sl}_2(\mathbb{C})\}.
\end{displaymath}
We fix the standard triangular decomposition in each $\mathfrak{sl}_2(\mathbb{C})$ and consider highest
weight $\mathfrak{g}$-modules $L(h_1,h_2)$ with respect to this decomposition, where $h_i$, $i=1,2$,
gives the highest weight for the $i$-th copy of $\mathfrak{sl}_2(\mathbb{C})$. Then $L(h_1,h_2)$
admits a $\mathbb{Z}_2$-grading if and only if $h_1=h_2$, in which case it is isomorphic to
$M(\mathbb{Z}_2,\{0\},L(h_1,h_2))$. If $h_1\neq h_2$, then $L(h_1,h_2)\oplus L(h_2,h_1)$ has the natural
$\mathbb{Z}_2$-grading making it a graded simple $\mathfrak{g}$-module isomorphic to both
$M(\mathbb{Z}_2,\mathbb{Z}_2,L(h_1,h_2))$ and $M(\mathbb{Z}_2,\mathbb{Z}_2,L(h_2,h_1))$.\qed
}
\end{example}

Our Classification Theorem~\ref{Mod2} reduces
construction of $Q$-graded simple modules over a $Q$-graded Lie
algebra $\mathfrak{g}$ to classification of non-extendable
$Q/P$-grading on all simple $\mathfrak{g}$-modules, for any subgroup
$P$ of $Q$. Some results in this direction can be found in \cite{EK}.
We note that  \cite[Theorem~8]{EK} determined the necessary and sufficient
conditions for two finite dimensional $M(Q,P,V)$ to be graded isomorphic
if $\mathfrak{g}$ is a finite dimensional simple Lie algebra.
We complete the paper with the following question:

\begin{problem}\label{openproblem}
{\rm
Find necessary and sufficient conditions for two graded simple $\mathfrak{g}$-modules
$M(Q,P,V)$ and $M(Q,P',V')$ to be isomorphic.
}
\end{problem}

Further studies on graded simple modules are going on in the recent paper \cite{EK4}.

\vspace{.2cm}

\begin{center}
\bf Acknowledgments
\end{center}

\noindent The research presented in this paper was carried out
during the visit of both authors to the Institute Mittag-Leffler.
V.M. is partially supported by the Swedish Research Council,
Knut and Alice Wallenbergs Stiftelse and the Royal Swedish Academy of Sciences.
K.Z. is partially supported by NSF of China (Grant 11271109) and NSERC.
We thank Alberto Elduque and Mikhail Kochetov for comments on the original version of
the paper, in particular, for pointing out two subtle mistakes.

\vspace{0.2cm}

\noindent  V.M.: Department of Mathematics, Uppsala University, Box
480, SE-751 06, Uppsala, Sweden; e-mail:
{\tt mazor\symbol{64}math.uu.se} \vspace{0.2cm}

\noindent K.Z.: Department of Mathematics, Wilfrid Laurier
University, Waterloo, Ontario, N2L 3C5, Canada; and College of
Mathematics and Information Science, Hebei Normal (Teachers)
University, Shijiazhuang 050016, Hebei, P. R. China; e-mail:
{\tt kzhao\symbol{64}wlu.ca}
\end{document}